\documentclass[a4paper,reqno]{amsart}

\usepackage{a4wide}
\usepackage{graphicx,color}

\usepackage[english]{babel}

\usepackage{amsmath}
\usepackage{amssymb}
\usepackage{amsthm}
\usepackage[foot]{amsaddr}
\usepackage{cite}
\usepackage{centernot}
\usepackage{mathtools}
\usepackage{bm}\newcommand{\mat}[1]{\bm{\mathsf{#1}}}

\usepackage{enumerate}
\usepackage{units}
\usepackage{subfigure}
\usepackage{setspace}
\usepackage{wrapfig}
\usepackage{cutwin}
\usepackage[export]{adjustbox}
\usepackage{url}
\usepackage{nicefrac}
\usepackage{xcolor}

\numberwithin{equation}{section}

\everymath{\displaystyle}

    \newcommand{\Id}{\text{Id}}
    
    \newcommand{\R}{\mathbb{R}}

    \newcommand{\N}{\mathbb{N}}
    
    \newcommand{\la}{\langle}
    \newcommand{\ra}{\rangle}
    \newcommand{\grad}{\nabla}

    \newcommand{\hoz}{\spcH^1_0(\Omega)}
    
    \newcommand{\into}{\int_{\Omega}}

    \newcommand{\htto}{\spcH^{2}_{\bm{\mathsf{\beta}}}(\Omega)}
    \newcommand{\htt}{\spcH^{2}_{\bm{\mathsf{\beta}}}}
    \newcommand{\T}{\mathcal{T}_h}
    \newcommand{\mesh}{\mathcal{T}}
    \newcommand{\Pb}{\Phi_{\bm{\mathsf{\beta}}}}

    \newcommand{\Ltwo}{\spcL^2(\Omega)}

    \newcommand{\spcL}{\mathrm{L}}
    \newcommand{\spcH}{\mathrm{H}}

\newcommand{\operator}[1]{\mathsf{#1}}
\newcommand{\B}{\operator{B}}

\newcommand{\E}{\operator{E}}

\renewcommand{\L}{\mathrm{L}} 
\renewcommand{\H}{\mathrm{H}}

\newcommand{\VV}{\spc{V}}
\newcommand{\WW}{\spc{W}}

\renewcommand{\d}{\mathrm{d}}
\newcommand{\ds}{\,\d s}
\newcommand{\dt}{\,\d t}
\newcommand{\dx}{\,\d\x}

\newcommand{\x}{\bm{\mathsf{x}}}
\newcommand{\spc}[1]{\mathbb{#1}}

\newcommand{\unn}{u_N^{n^\star}}
\renewcommand{\u}[1]{\bm{\mathfrak{u}}_{N_{#1}}}

\newcommand{\uu}{\bm{\mathfrak{u}}}

\newcommand{\un}{u^\star_N}
\newcommand{\an}{\left|\into (f(\x,u)-f(\x,\un))(u-\un) \dx \right|}
\newcommand{\ani}{\left|\into (f(\x,u)-f(\x,\un))(u-I_hu) \dx\right|}
\newcommand{\bn}{\|\grad(u-\un)\|_{\L^2(\Omega)}}
\newcommand{\bni}{\|\grad(u-I_hu)\|_{\L^2(\Omega)}}
\newcommand{\dn}{\la \E'(u)-\E'(\un),u-\un \ra}
\newcommand{\dni}{\la \E'(u)-\E'(\un),u-I_hu \ra}
\newcommand{\PP}{\Upsilon_u}
\newcommand{\DD}{\mathbb{D}}

\newcommand{\CP}{C_{\mathrm{P}}}

\DeclareMathOperator*{\esssup}{ess\,sup}

\newcommand{\KK}{\mathcal{K}_u}
\newcommand{\enpar}{e_N^{\parallel}}
\newcommand{\enperp}{e_N^{\perp}}
    
\newcommand{\GM}{C_{\mathrm{I}}}
\newcommand{\ER}{C_{\mathrm{ER}}}

\newcommand{\limab}{\varsigma}

\newtheorem{theorem}{Theorem}[section]
\newtheorem{corollary}[theorem]{Corollary}
\newtheorem{lemma}[theorem]{Lemma}
\newtheorem{proposition}[theorem]{Proposition}

\theoremstyle{definition}
\newtheorem{definition}[theorem]{Definition}

\newtheorem{remark}{Remark}

\newtheorem{assumption}[theorem]{Assumption}

\begin{document}

\author{Florian Spicher \and Thomas P.~Wihler}
\address{Mathematics Institute, University of Bern, CH-3012 Switzerland}

\title[Iterative FEM for non-monotone semilinear elliptic PDEs]{Iterative finite element approximation of non-monotone semilinear diffusion-reaction equations}

\keywords{Semilinear elliptic boundary value problems, non-monotone problems, corner-weighted Sobolev spaces, elliptic corner singularities in polygons, finite element methods, optimal convergence, graded meshes, Aubin-Nitsche trick}

\subjclass{47J25, 65J15, 65N30}

\thanks{The authors acknowledge the financial support of the Swiss National Science Foundation (SNSF), Grant No.~$200021\underline{\phantom{a}}212868$.}

\begin{abstract}
We study iterative linearized finite element methods for the numerical approximation of semilinear elliptic boundary value problems with nonlinear reaction terms of asymptotically linear growth. Our approach reaches considerably beyond the classical theory by allowing for nonlinearities that are \emph{not} necessarily monotone. We investigate a stabilized implicit-explicit (IMEX) iteration combined with first-order finite element discretizations on graded meshes that are able to resolve corner singularities in polygonal domains. To account for the reduced regularity of the analytical solutions, we establish regularity estimates in corner-weighted Sobolev spaces. The principal novelty of the paper is a non-standard analysis of the approximation properties of the discrete Galerkin limits resulting from the iterative process, which yields optimal \emph{a priori} convergence estimates. Numerical experiments underline the theoretical findings.
\end{abstract}

\maketitle

\section{Introduction}
      
    On a bounded polygonal domain $\Omega \subset\R^2$, we seek numerical approximations of solutions $u\in\hoz$ of the semilinear elliptic boundary value problem
    \begin{subequations}\label{Pb:main PDE}
     \begin{alignat}{2}
            -\Delta u &=  f(\cdot,u)&\qquad&  \text{in } \Omega \label{Pb:main PDEa}\\
            u&=0&& \text{on }\partial \Omega \label{Pb:main PDEb}.
    \end{alignat}
    \end{subequations}
    The focus of this paper is on nonlinear reaction terms $f:\,\Omega\times\R\to\R$ for which \emph{no monotonicity assumptions} are imposed. This implicates substantial challenges for the numerical analysis of the underlying problem, as standard approximation techniques (such as C\'ea's lemma in the finite element setting) or contractive iterative schemes (based on Banach's fixed-point theorem) are no longer readily applicable; for a priori error estimates in the monotone case, we refer to the recent works~\cite{HHSW:26,SpicherWihler2025,Vexler:25}. Nevertheless, if $f$ exhibits asymptotically linear growth in the second argument in the sense that
    \begin{equation}\label{eq:ling}
    |f(\cdot,s)|\le C(1+|s|)\quad\text{as}\quad|s|\to\infty\qquad\text{a.e. in }\Omega,
    \end{equation}
    then the introduction of a suitable $\L^2$-stabilization into~\eqref{Pb:main PDEa} restores some key monotonicity properties. This crucial observation can also be exploited for numerical purposes: in fact, for a suitable initial guess, and a sufficiently small (fixed) parameter $\Delta t>0$, it can be shown that the iterative procedure given by
    \begin{subequations}\label{eq:discretization}
    \begin{alignat}{2}
            - \Delta u^{n+1}+\frac{1}{\Delta t}u^{n+1}  &= \frac{1}{\Delta t}u^n + f(\cdot,u^n)&\qquad& \text{in } \Omega\\
            u^{n+1} &=0&\qquad & \text{on } \partial \Omega,
    \end{alignat}
    \end{subequations}
    for $n\geq 0$, generates a sequence $\{u_n\}_{n\ge 0}$ that converges weakly to a solution $u\in\hoz$ of~\eqref{Pb:main PDEa}; we refer the interested reader to the method of sub- and supersolutions in the elliptic PDE literature (see, e.g., \cite[\S9.3]{Evans2010}). Alternatively, the above iteration can be interpreted as an implicit-explicit (IMEX) time-discretization scheme of the semilinear parabolic evolution equation 
    \begin{equation*}
        \partial_t v(\x,t) - \Delta v(\x,t) = f(\x,v(\x,t))\qquad (\x,t)\in\Omega\times(0,\infty),
    \end{equation*}
    for a time step $\Delta t>0$. 
    
    From a practical point of view, it is important to note that~\eqref{eq:discretization} amounts to a \emph{linear solve} at each iteration step, making the resulting procedure particularly appealing for numerical approximation once it is discretized in space, for instance, by finite element methods. Indeed, in the recent work \cite{Wih}, under uniformly bounded derivative conditions on the reaction term~$f$ (as in~\eqref{eq:ling} and to be specified later on in \S\ref{sc:lg}), the stabilized IMEX iteration \eqref{eq:discretization} has been shown to yield (weak) convergence in closed subspaces (including, in particular, finite-dimensional Galerkin subspaces), and that the resulting limits tend to a weak solution of~\eqref{Pb:main PDE} on a hierarchical $\hoz$-dense sequence of subspaces; cf.~\cite[Cor.~3.12]{Wih} and~\cite[Thm.~4.2]{Wih}, respectively. 
    
    A key aspect of the work~\cite{Wih} is the introduction of a novel \emph{energy-based} analytical framework, which allows the iteration~\eqref{eq:discretization} to be interpreted as a local minimization process for an energy functional $\E:\hoz\to\R$ associated with the PDE~\eqref{Pb:main PDEa}; see~\S\ref{sc:energy functional} below. Crucially, by taking into account the underlying energy of the problem (which is assumed to be weakly coercive) rather than focusing solely on the PDE itself, additional structure that proves decisive for the numerical analysis is acquired.

    In the present work, we continue to pursue the promising energy-based approach introduced in~\cite{Wih}, and substantially extend the existing theory by developing a complete \emph{a priori} error analysis for finite element approximations of~\eqref{Pb:main PDE}. In particular, our objective is to establish optimal convergence rates for $\mathbb{P}_1$-finite element discretizations in polygonal domains. To this end, we develop non-standard analytical techniques that go well beyond the classical theory for coercive or monotone operators. Moreover, we admit the presence of elliptic corner singularities necessitating suitably graded meshes. The main contributions of this paper are twofold:
\begin{enumerate}[1.]
\item
We establish a regularity result for~\eqref{Pb:main PDE} in a Kondrat'ev-type framework~\cite{kondrat1967boundary}, which accounts for possible corner singularities of the solution. More precisely, by exploiting the regularity theory for linear elliptic problems developed in~\cite{Babuska1986,Babuska1979}, we show that the solution of~\eqref{Pb:main PDE} belongs to a scale of corner-weighted Sobolev spaces. These spaces exhibit classical $\spcH^2$-regularity away from the corners of~$\Omega$, while permitting singular behavior at the vertices. This regularity result naturally motivates the use of graded meshes near the corners of~$\Omega$, originally introduced in~\cite{Babuska1979} (see also~\cite{SpicherWihler2025} for monotone semilinear PDEs), which recover optimal approximation rates despite the reduced regularity.

\item
The central idea of our error analysis is to distinguish several asymptotic regimes, which we characterize by a finite parameter $\limab\in\R$, see \S\ref{sc:OCR general case} (in particular~\eqref{eq:def ell}). The sign of $\limab$ reflects the asymptotic balance between the diffusive and reactive parts  in the Gâteaux derivative of the associated energy functional. When $\limab<0$, we obtain a quasi-optimal error estimate, analogous to the classical finite element theory. Similarly, in the borderline case $\limab=0$, the nonlinear reaction becomes asymptotically negligible. The most challenging regime is $\limab>0$, where the classical machinery fails. In this case, we apply a semilinear Aubin-Nitsche-type argument, inspired by~\cite{Hardering2017}, for which we prove a new existence and stability result (in weighted Sobolev spaces) using Fredholm theory. Then, our main theorem, Thm.~\ref{thm:OCR Galerkin}, establishes optimal convergence rates for the proposed IMEX iteration based on $\mathbb{P}_1$-finite element spaces with suitable local mesh grading. In particular, we demonstrate that numerical approximations of~\eqref{Pb:main PDE} of any prescribed accuracy can be computed in finitely many steps; moreover, in the contractive regime, the required number of iterations scales only logarithmically with respect to the number of degrees of freedom of the underlying finite element space, while preserving the optimal convergence rate.
\end{enumerate}
 
\subsubsection*{Outline}
    We begin by fixing the variational setting, and revisit a few instrumental tools from \cite{Wih} in \S\ref{sc:framework}; in particular we introduce parametrized norms in \S\ref{sc:function space}, discuss precisely the linear growth condition on the nonlinear reaction term~$f$ in \S\ref{sc:lg}, and present the energy functional with boundedness and Lipschitz properties related to its first variation in \S\ref{sc:energy functional}. The regularity of~\eqref{Pb:main PDE} in scales of weighted Sobolev spaces will be discussed in~\S\ref{sc:reg}. Furthermore, in \S\ref{sc:scheme} we focus on the IMEX iteration~\eqref{eq:discretization}, and on Galerkin approximations of the solution of \eqref{Pb:main PDE} in closed subspaces. In addition, \S\ref{sc: convergence} proves weak convergence for sequences of exact Galerkin limits on an increasing family of finite-element subspaces to a weak solution of \eqref{Pb:main PDE} (see Prop.~\ref{prop: u critical Galerkin}). In \S\ref{sc:ocr} we derive a priori error estimates: we first prove general quasi-optimal bounds and an optimal convergence result for the Galerkin limits in special settings treated in \S\ref{ssc:OCR special case}--\S\ref{ssc:OCR monotone}; by a specifically tailored Aubin-Nitsche duality argument, combined with the graded-mesh interpolation properties, we then derive the rate in the general setting under sharp assumptions (Thm.~\ref{thm:OCR Galerkin}). Additionally, \S\ref{ssc:OCR grad} treats the existence of strongly convergent subsequence of the Galerkin limits (Cor.~\ref{cor:strong conv}). We conclude the theoretical part of this work with \S\ref{sc:approx}, which discusses the attainability of the optimal rate with a finite number of iterations: under the additional assumption that the iterates converge to a targeted Galerkin (Rmk.~\ref{rmk:OCR approx}),
    the iteration error can be made negligible relative to the approximation error on any fixed discrete space $\VV_N$, so that a prescribed tolerance is reached after finitely many steps. 
    Finally, we test our theoretical findings through a series of numerical experiments in \S\ref{sc:numerics}, which highlight the sharpness of our assumptions, and we draw some conclusions in \S\ref{sc:conclusion}.

\section{Energy-based framework and regularity}\label{sc:framework}
    
In this section, we briefly review an appropriate variational setting, including a precise growth condition on the nonlinearity~$f$ in~\eqref{Pb:main PDEa}, which has recently been proposed in~\cite{Wih}, and that serves as the basis of an a priori energy-oriented numerical analysis to be developed in this paper. In addition, we establish the regularity of~\eqref{Pb:main PDE} in polygonal domains~$\Omega\subset\R^2$.
    
\subsection{Function spaces}\label{sc:function space}
    Let $\VV:= \hoz$ be the Sobolev space of all functions in $\spcH^1(\Omega)$ with vanishing boundary trace along $\partial\Omega$, equipped with the $\spcH^1$-seminorm $\|\cdot\|_{\VV}:=\|\nabla(\cdot)\|_{\L^2(\Omega)}$. We also consider the dual space $\VV'$, and the duality pairing $\la \cdot, \cdot \ra$ in $\VV' \times \VV$. In accordance with the left-hand operator in~\eqref{eq:discretization}, the space $\VV$ is endowed with a parametrized family of norms $\| \cdot \|_\lambda$, where, for $\lambda>0$, we define
    \begin{equation}\label{eq:lambda norm}
        \|v\|_\lambda := \left(\|v\|_{\L^2(\Omega)}^2+\lambda\|\grad v\|_{\L^2(\Omega)}^2\right)^{\nicefrac{1}{2}}, \qquad v\in \VV,
    \end{equation}
    with $\|\cdot\|_{\L^2(\Omega)}$ denoting the standard $\L^2$-norm. Using the Poincar\'e inequality,
    \begin{equation}\label{eq:Poincare L2}
        \|v\|_{\L^2(\Omega)}^2 \leq \CP \|\grad v\|_{\L^2(\Omega)}^2 \qquad \forall v \in \VV,
    \end{equation}
    with a constant $\CP > 0$ depending only on $\Omega$, we recall the following result for the norm $\|\cdot\|_\lambda$. Referring to~\cite[Lem.~2.1]{Wih}, for any $v\in\VV$ and $\lambda>0$, it holds the bound
    \begin{subequations}
        \begin{equation}\label{eq:Poincare lambda}
            \|v\|_{\L^2(\Omega)}^2 \leq \theta(\lambda) \|v\|_\lambda^2,
        \end{equation}
        with $0 < \theta(\lambda) < 1$ given by 
        \begin{equation}\label{def:beta(lambda)}
            \theta(\lambda):= \frac{\CP}{\CP+\lambda},
        \end{equation}
        where $\CP>0$ is the Poincar\'e constant from \eqref{eq:Poincare L2}.
    \end{subequations}

    \subsection{Asymptotic linear growth}\label{sc:lg}
    Following the approach~\cite{Wih}, we consider nonlinear reaction terms $f$ in~\eqref{Pb:main PDEa} that are linearly growing, cf.~\eqref{eq:ling}, with sharp upper and lower bounds on the partial derivative~$f_u$, see~\eqref{eq:f' bounds} below. More precisely, the following conditions will be imposed throughout.

    \begin{assumption}\label{Ass: f}
        The function $f: \Omega \times \R \rightarrow \R$ satisfies
        \begin{enumerate}[(i)]
            \item \label{Ass: f L2} $f(\cdot,s)\in \L^2(\Omega)$ for all $s\in \R$;
            \item \label{Ass: fu existence} $f$ is partially (but not necessarily continuously) differentiable in the second variable; we write $f_u \equiv \partial_uf$;
            \item \label{Ass: fu bound} There exists a constant $\rho>0$ such that the set
            \begin{equation}\label{eq: Lambda(rho)}
                \Lambda_f(\rho) := \{\lambda>0 \; : \; \sigma_f(\lambda) < \rho + \nicefrac{1}{\lambda}\}
            \end{equation}
            is nonempty, where we let
            \begin{equation}\label{eq: sigma(lambda)}
                \sigma_f(\lambda) := \esssup_{\x \in \Omega}\sup_{u \in \R} \; \left| f_u(\x,u) + \frac{1}{\lambda} \right|, \qquad \lambda >0.
            \end{equation}
        \end{enumerate}
    \end{assumption}

    Starting from \S\ref{sc:ocr}, in order to derive our convergence rate results, we will refine {the above} condition \eqref{Ass: fu existence} by requiring $f$ to be \textit{continuously} differentiable, {however without any monotonicity requirement} (which is the key novelty {of our work}). If Asm.~\ref{Ass: f} is fulfilled, then $f$ is a Carathéodory function, i.e., 
    \begin{subequations}\label{eq:Caratheodory}
    \begin{alignat}{2}
            \Omega\ni\x &\mapsto f(\x,s) \text{ is measurable for every } s\in \R,\\
            \intertext{and}
            \R\ni s &\mapsto f(\x,s) \text{ is continuous for almost every } \x\in \Omega.
    \end{alignat}
    \end{subequations}
    Besides, for $\lambda \in \Lambda_f(\rho)$, the function $g_\lambda$ defined by 
    \begin{equation}\label{eq: g_lambda}
        g_\lambda(\x,u) := f(\x,u) + \lambda^{-1} u, \qquad (\x,u) \in \Omega \times \R,
    \end{equation}
    satisfies the uniform Lipschitz continuity bound
    \begin{equation}\label{eq: g Lipschitz}
        |g_\lambda(\x,u)-g_\lambda(\x,v)|\leq \sigma_f(\lambda)|u-v| \qquad \forall u,v\in \R,
    \end{equation}
    for almost every $\x \in \Omega$, where $\sigma_f(\lambda)<\rho+\nicefrac{1}{\lambda}$, cf. \eqref{eq: Lambda(rho)}.

    Furthermore, we specify bounds for the lower and upper values on the partial derivative $\partial_u f$, which are essentially equivalent to \eqref{eq: Lambda(rho)} and \eqref{eq: sigma(lambda)}: 
    \begin{equation}\label{eq:f' bounds}
        -\rho-\mu_f \leq f_u(\x,u) <\rho \qquad \text{for a.e. } \x\in\Omega \text{ and for all } u\in \R,
    \end{equation}
    where
    \begin{equation}\label{eq:def mu}
        \mu_f:= \begin{cases}
            2(\sup\Lambda_f(\rho))^{-1}, & \text{if } \sup\Lambda_f(\rho) < \infty;\\
            0, & \text{otherwise.}
        \end{cases}
    \end{equation}
    In particular, the \emph{undershooting coefficient} $\mu_f$ introduced in \cite{Wih} allows for a precise estimate on the pseudo-time parameter $\Delta t>0$ present in \eqref{eq:discretization} to reach steady-state stability.

\subsection{Variational framework}\label{sc:energy functional}
    In the paper~\cite{Wih}, a new energy-based analysis for the boundary value problem~\eqref{Pb:main PDE}, which, in particular, allows to study the iterative scheme~\eqref{eq:discretization} from the viewpoint of local energy minimization, has been presented. Indeed, the PDE~\eqref{Pb:main PDE} is associated with the
    energy functional $\E: \VV \rightarrow \R$ defined by
    \begin{equation}\label{eq:energy func}
        \E(u) := \frac{1}{2}\into |\grad u|^2 \; \dx - \into F(\x,u) \; \dx,
    \end{equation}
    where, for $\x \in \Omega$, we let
    \begin{equation}\label{eq:F}
        F(\x,t) := \int_0^t f(\x,s) \; \d s, \qquad t \in \R.
    \end{equation}
    Under Asm.~\ref{Ass: f}, for $\rho>0$ and $\lambda \in \Lambda_f(\rho)$, the energy functional from \eqref{eq:energy func} is well-defined on $\VV$.
    Furthermore, for any $u \in \VV$, the G\^ateaux derivative of $\E$ {at $u\in\VV$} is given by
    \begin{equation}\label{eq:1st variation}
       {\E'(u)\in\VV':\qquad}\la \E'(u),v\ra = \into \left(\grad u \cdot \grad v - f(\x,u)v \right)\dx \qquad \forall v \in \VV.
    \end{equation}
    {Within this setting, the Euler-Lagrange formulation of the minimization} problem
    \begin{equation*}
        u\in \VV: \qquad \E(u)=\min_{v\in\VV} \E(v),
    \end{equation*}
    {is expressed in weak form by}
    \begin{equation}\label{Pb:weak form}
        u\in \VV: \qquad \la \E'(u),v\ra = \into (\grad u \cdot \grad v - f(\x,u)v ) \dx =0 \qquad \forall v \in \VV.
    \end{equation}
    In particular, any critical point (and, specifically, any minimizer) of $\E$ in $\VV$ {is} a solution of \eqref{Pb:weak form}, which is equivalent to being a weak solution of \eqref{Pb:main PDE}.

    \begin{remark}\label{rmk:special weak coercivity}
        In the special case where $\rho\leq \nicefrac{1}{\CP}$, with $\CP$ the Poincar\'e constant from \eqref{eq:Poincare L2}, the energy functional $\E$ is \textit{weakly coercive}, i.e.,
        \begin{equation*}
            \E(v) \rightarrow + \infty \qquad \text{whenever} \qquad \|v\|_\lambda \rightarrow \infty,
        \end{equation*}
        for any fixed $\lambda >0$.
    \end{remark}

    We now collect a few instrumental results for the subsequent analysis.

    \begin{lemma}[Lipschitz continuity of $f$]\label{lemma: f Lipschitz}
        Suppose Asm.~\ref{Ass: f} is satisfied for some $\rho>0$, and that $f_u$ is measurable in the first and pointwise continuous in the second argument. Then, it holds
        \begin{equation}\label{eq: f Lipschitz}
            \left|\into (f(\x,u)-f(\x,v))w \; \dx\right| \leq (\rho+\mu_f)\|u-v\|_{\L^2(\Omega)}\|w\|_{\L^2(\Omega)} \qquad \forall u,v,w\in \VV.
        \end{equation}
    \end{lemma}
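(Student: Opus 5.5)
The argument is pointwise: first bound $|f(\x,u)-f(\x,v)|$ by a Lipschitz constant, then integrate and apply Cauchy--Schwarz. The main point is to show that the uniform bound on $|f_u|$ is $\rho+\mu_f$, using~\eqref{eq:f' bounds}.

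\emph{Step 1: uniform bound on the derivative.} I claim that, for a.e.\ $\x\in\Omega$ and all $s\in\R$,
\[
|f_u(\x,s)|\le \rho+\mu_f .
\]
This follows from~\eqref{eq:f' bounds} together with $\mu_f\ge0$. If one wants to derive~\eqref{eq:f' bounds} from Asm.~\ref{Ass: f} directly, take any $\lambda\in\Lambda_f(\rho)$. By~\eqref{eq: sigma(lambda)}, $|f_u+\lambda^{-1}|\le\sigma_f(\lambda)<\rho+\lambda^{-1}$. This gives $f_u<\rho$, and also $f_u>-\rho-2\lambda^{-1}$. Taking $\lambda\uparrow\sup\Lambda_f(\rho)$ (or $\lambda\to\infty$ if the supremum is infinite) yields $f_u\ge-\rho-\mu_f$ by~\eqref{eq:def mu}. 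Both cases give $|f_u|\le\rho+\mu_f$.

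\emph{Step 2: pointwise Lipschitz bound.} Fix $\x$ outside a null set. The map $s\mapsto f(\x,s)$ is differentiable, and by hypothesis its derivative is continuous. The fundamental theorem of calculus (or simply the mean value theorem) then gives
\[
|f(\x,a)-f(\x,b)|\le(\rho+\mu_f)|a-b| \qquad \text{for all } a,b\in\R.
\]
For $u,v\in\VV$, the functions $\x\mapsto f(\x,u(\x))$ and $\x\mapsto f(\x,v(\x))$ are measurable because $f$ is Carathéodory, cf.~\eqref{eq:Caratheodory}. Moreover,
\[
|f(\x,u(\x))-f(\x,v(\x))|\le(\rho+\mu_f)|u(\x)-v(\x)| \qquad \text{a.e. in }\Omega .
\]

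\emph{Step 3: integrate.} Multiply the bound from Step~2 by $|w|$, integrate over $\Omega$, and apply the Cauchy--Schwarz inequality in $\L^2(\Omega)$. This gives~\eqref{eq: f Lipschitz}. The integrals are finite because $u-v,w\in\L^2(\Omega)$.

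The only delicate point is Step~1: one must check that the one-sided bounds from $\Lambda_f(\rho)$ combine into the symmetric constant $\rho+\mu_f$, including the limiting argument in $\lambda$. The remaining steps are routine.
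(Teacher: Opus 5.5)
Your proof is correct and follows essentially the paper's route: the paper writes $f(\x,u)-f(\x,v)=\bigl(\int_0^1 f_u(\x,\Gamma_t)\dt\bigr)(u-v)$ with $\Gamma_t=tu+(1-t)v$, bounds the averaged derivative by $\rho+\mu_f$ via \eqref{eq:f' bounds}, and then applies Cauchy--Schwarz, which amounts to your Steps~2--3 without stating the pointwise Lipschitz bound separately. Your remark that the mean value theorem suffices is a small improvement: it shows that the continuity (and measurability) hypotheses on $f_u$ are not actually needed for this estimate.
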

    \begin{proof}
    {For $u,v\in\VV$, and $t\in[0,1]$, define $\Gamma_t:=tu+(1-t)v$. Then, by the main theorem of calculus, we have
        \begin{align*}
            \into (f(\x,u)-f(\x,v))w \; \dx
            &=\into \left(\int_0^1\frac{\d}{\dt}f(\x, \Gamma_t)\;\dt\right)w \; \dx
            = \into \left(\int_0^1 f_u(\x,\Gamma_t) \; \dt \right) (u-v)w \; \dx.
        \end{align*} 
    Hence, using the Cauchy-Schwarz inequality, we obtain}
        \begin{align*}
            \left|\into (f(\x,u)-f(\x,v))w \; \dx\right| 
            &\leq \|u-v\|_{\L^2(\Omega)}\|w\|_{\L^2(\Omega)}\int_0^1\|f_u(\cdot,\Gamma_t)\|_{\L^\infty(\Omega)} \; \dt \\
            &\leq (\rho+\mu_f)\|u-v\|_{\L^2(\Omega)}\|w\|_{\L^2(\Omega)},
        \end{align*}
        where we have used \eqref{eq:f' bounds} to derive the last inequality.
    \end{proof}

    The ensuing result is borrowed from~\cite[Lem.~2.6]{Wih}.

    \begin{lemma}[Lipschitz Continuity of $\E'$]\label{lemma: E' Lipschitz}
        {Under} Asm.~\ref{Ass: f} for some $\rho>0$, and $\lambda\in \Lambda_f(\rho)$, cf. \eqref{eq: Lambda(rho)}, {the energy derivative} $\E':\VV \rightarrow \VV'$ is (uniformly) Lipschitz continuous in the sense that
        \begin{subequations}\label{eq: E' Lipschitz}
        \begin{equation}
            |\la \E'(u)-\E'(v),w\ra| \leq L_{\E'}(\lambda) \|u-v\|_\lambda\|w\|_\lambda \qquad \forall u,v,w \in \VV,
        \end{equation}
        where
        \begin{equation}
            L_{\E'}(\lambda) := \lambda^{-1}+\theta(\lambda)\sigma_f(\lambda),
        \end{equation}
        \end{subequations}
        with $\theta(\lambda)$ from \eqref{def:beta(lambda)}.
    \end{lemma}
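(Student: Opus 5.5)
The plan is to split $\E'$ into a linear part, a diffusion-type operator measured in the $\lambda$-norm, and the shifted nonlinearity $g_\lambda$ from \eqref{eq: g_lambda}, whose Lipschitz bound \eqref{eq: g Lipschitz} is uniform. Concretely, by \eqref{eq:1st variation} and the definition $g_\lambda(\x,u)=f(\x,u)+\lambda^{-1}u$, for $u,v,w\in\VV$ we can write
\begin{equation*}
\la \E'(u)-\E'(v),w\ra = \into \grad(u-v)\cdot\grad w\dx + \lambda^{-1}\into (u-v)w\dx - \into \bigl(g_\lambda(\x,u)-g_\lambda(\x,v)\bigr)w\dx .
\end{equation*}
The first two terms combine into $\lambda^{-1}\bigl(\into (u-v)w\dx+\lambda\into\grad(u-v)\cdot\grad w\dx\bigr)$, which is exactly $\lambda^{-1}$ times the inner product inducing $\|\cdot\|_\lambda$ in \eqref{eq:lambda norm}.

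The key steps are then as follows. First, by the Cauchy--Schwarz inequality in the $\|\cdot\|_\lambda$ inner product, the linear part is bounded by $\lambda^{-1}\|u-v\|_\lambda\|w\|_\lambda$. Second, for the nonlinear part, I would apply \eqref{eq: g Lipschitz} pointwise a.e. and then Cauchy--Schwarz in $\L^2(\Omega)$, obtaining the bound $\sigma_f(\lambda)\|u-v\|_{\L^2(\Omega)}\|w\|_{\L^2(\Omega)}$. Third, I would apply \eqref{eq:Poincare lambda} to each factor, giving $\|u-v\|_{\L^2(\Omega)}\|w\|_{\L^2(\Omega)}\le\theta(\lambda)\|u-v\|_\lambda\|w\|_\lambda$. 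Adding the two bounds yields the constant $L_{\E'}(\lambda)=\lambda^{-1}+\theta(\lambda)\sigma_f(\lambda)$.

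The only point needing care is the justification of \eqref{eq: g Lipschitz} under the weak hypothesis that $f$ is merely partially differentiable, not $C^1$, in $u$. Here the derivative of $s\mapsto g_\lambda(\x,s)$ exists everywhere and is bounded in modulus by $\sigma_f(\lambda)<\infty$ for a.e.\ $\x$. The mean value theorem, which requires only differentiability, then gives the Lipschitz bound directly, with no fundamental theorem of calculus needed. Measurability of the integrands follows from the Carathéodory property \eqref{eq:Caratheodory}, and $g_\lambda(\cdot,u)\in\L^2(\Omega)$ follows from the linear growth. With these checks in place the estimate is routine.
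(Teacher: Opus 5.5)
Your proof is correct: the splitting $\la \E'(u)-\E'(v),w\ra=\lambda^{-1}\B_\lambda(u-v,w)-\into (g_\lambda(\x,u)-g_\lambda(\x,v))w\dx$ is just the identity $\la\E'(u),w\ra=\lambda^{-1}(\B_\lambda(u,w)-\ell_\lambda(u;w))$ recorded in \S\ref{sc:scheme}. Combined with Cauchy--Schwarz, \eqref{eq: g Lipschitz} and \eqref{eq:Poincare lambda}, it yields exactly $L_{\E'}(\lambda)$, and your mean-value-theorem justification of \eqref{eq: g Lipschitz} rightly needs no continuity of $f_u$. The paper gives no proof of its own here, only the citation \cite[Lem.~2.6]{Wih}, so there is no in-paper argument to compare with beyond that recorded identity.
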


\subsection{Regularity}\label{sc:reg}
In situations where the polygonal domain $\Omega \subset \mathbb{R}^2$ contains non-convex corners, it is well-known that the inverse Laplacian $(-\Delta)^{-1}$ does not exhibit full elliptic regularity. Consequently, solutions of Poisson-type boundary value problems, including~\eqref{Pb:main PDE}, are typically found to be $\spcH^2$ away from the boundary~$\partial\Omega$;  however, $\spcH^2$-regularity does not extend uniformly to the corners of $\Omega$ in general. We will address this issue within the framework of corner-weighted Sobolev spaces; see, e.g., \cite{Babuska1986,Babuska1979,Schwab1998}.
    To this end, we define the weight function
    \begin{equation}\label{def:Pb}
        \Pb(\bm{\mathsf{x}})= \prod_{j=1}^m \text{dist}(\bm{\mathsf{x}},\bm{\mathsf{c}}_j)^{\beta_j},
    \end{equation}
    which involves the Euclidean distances from a point $\mat x\in\overline{\Omega}$ to any of the $m\geq 3$ corners of the polygon $\Omega$, denoted by $\mat c_1,\ldots,\mat c_m$, and associated exponents $\bm{\mathsf{\beta}}=(\beta_1,\ldots,\beta_m)$, with $0\leq \beta_1,\ldots,\beta_m <1$. We let 
    $\spcL_{\bm{\mathsf{\beta}}}(\Omega)$ be the space of all measurable functions $v$ on $\Omega$ for which the norm
    $
    \|v\|_{\spcL_{\bm{\mathsf{\beta}}}(\Omega)}
    :=\left\|\,\Phi_{\bm{\mathsf{\beta}}} v\,\right\|_{\L^2(\Omega)}
    $
    is finite. Then, we define the weighted $\spcH^2$-norm
     \begin{equation*}
            \|v\|^2_{\spcH^{2}_{\bm{\mathsf{\beta}}}(\Omega)} := 
            \|v\|^2_{\spcH^{1}(\Omega)}+\sum_{\alpha_1+\alpha_2=2}\|\partial_{x_1}^{\alpha_1}\partial_{x_2}^{\alpha_2}v\|^2_{\spcL_{\bm{\mathsf{\beta}}}(\Omega)},
        \end{equation*}
          as well as the associated weighted Sobolev space
        \begin{equation*}
            \spcH^{2}_{\bm{\mathsf{\beta}}}(\Omega) := \big\{v \in \spcH^1(\Omega) \; : \; \|v\|_{\spcH^{2}_{\bm{\mathsf{\beta}}}(\Omega)} < \infty\big\}.
        \end{equation*}
    We notice the continuous embedding 
    \begin{equation}\label{eq:emb}
    \htto \hookrightarrow \mathrm{C}^0(\overline{\Omega})\,;
    \end{equation}
    see, e.g., \cite[Eq.~(2.2)]{Babuska1979}.

    \begin{theorem}[Regularity of weak solution]\label{thm:beta}
        Suppose that the polygon $\Omega$ is non-degenerate, i.e., all interior angles at the corners $\mat c_1,\ldots,\mat c_m$, which we signify by $\omega_1,\ldots,\omega_m$, fulfill $\omega_j\in(0,2\pi)$ for each $j=1,\ldots,m$, and suppose Asm.~\ref{Ass: f} is satisfied for some $\rho>0$. If the weight exponents $\beta_1,\ldots,\beta_m$ in~\eqref{def:Pb} obey the bound
        \begin{equation}\label{eq:beta LB}
          1>\beta_j >  1-\nicefrac{\pi}{\omega_j}
        \end{equation}
        then any weak solution of~\eqref{Pb:main PDE} belongs to $\VV \cap \htto$.
    \end{theorem}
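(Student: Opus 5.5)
The plan is to reduce the semilinear problem to a linear Poisson problem with a right-hand side in a weighted space, and then invoke the linear regularity theory of Babu\v{s}ka et al.~\cite{Babuska1986,Babuska1979}. Let $u\in\VV$ be a weak solution of~\eqref{Pb:main PDE}, and set $g:=f(\cdot,u)$. Then $u$ is the unique weak solution in $\VV$ of the linear problem $-\Delta u=g$ in $\Omega$, $u=0$ on $\partial\Omega$. The linear theory for non-degenerate polygons provides the following. If the exponents satisfy~\eqref{eq:beta LB}, then $-\Delta:\VV\cap\htto\to\spcL_{\bm{\mathsf{\beta}}}(\Omega)$ is an isomorphism, or at least every $\VV$-solution with datum in $\spcL_{\bm{\mathsf{\beta}}}(\Omega)$ lies in $\htto$, with
\[
\|u\|_{\htto}\le C\|g\|_{\spcL_{\bm{\mathsf{\beta}}}(\Omega)}.
\]
Hence it suffices to show that $g\in\spcL_{\bm{\mathsf{\beta}}}(\Omega)$.

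\textbf{Step 1.} We establish that $g\in\L^2(\Omega)$. By Asm.~\ref{Ass: f}, $f$ is Carath\'eodory, cf.~\eqref{eq:Caratheodory}, so $g$ is measurable. From~\eqref{eq:f' bounds} together with the mean value theorem applied pointwise (which uses only differentiability in $s$, valid for a.e.~$\x$), we obtain
\[
|f(\x,s)-f(\x,0)|\le(\rho+\mu_f)|s|.
\]
Equivalently, this follows from the Lipschitz bound~\eqref{eq: g Lipschitz} on $g_\lambda$ for some $\lambda\in\Lambda_f(\rho)$, which gives
\[
|f(\x,s)|\le|f(\x,0)|+(\sigma_f(\lambda)+\lambda^{-1})|s|.
\]
Since $f(\cdot,0)\in\L^2(\Omega)$ by Asm.~\ref{Ass: f}\eqref{Ass: f L2} and $u\in\L^2(\Omega)$, it follows that $g\in\L^2(\Omega)$, with
\[
\|g\|_{\L^2(\Omega)}\le\|f(\cdot,0)\|_{\L^2(\Omega)}+C\|u\|_{\L^2(\Omega)}.
\]

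\textbf{Step 2.} We embed $\L^2(\Omega)$ into the weighted space. Since $\beta_j\ge0$ and $\Omega$ is bounded, the weight $\Pb$ is bounded on $\overline\Omega$ by $\operatorname{diam}(\Omega)^{\sum_j\beta_j}$. Therefore
\[
\|g\|_{\spcL_{\bm{\mathsf{\beta}}}(\Omega)}\le C\|g\|_{\L^2(\Omega)}<\infty.
\]

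\textbf{Step 3.} We apply the weighted shift theorem for the Dirichlet Laplacian on polygons. The condition $\beta_j>1-\pi/\omega_j$ is exactly the requirement that the leading corner singularity $r^{\pi/\omega_j}\sin(\pi\theta/\omega_j)$ belongs to $\spcH^2_{\beta_j}$ near $\mat c_j$. This gives $u\in\htto$, and the embedding~\eqref{eq:emb} then shows in addition that $u$ is continuous.

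The main obstacle is Step 3, specifically matching the precise hypotheses of the cited linear result. Some versions are stated for data in $\spcL_{\bm{\mathsf{\beta}}}$ only, with conditions $\beta_j<1$, and do not include a lower-order $\spcH^1$ term. Others are formulated locally via a partition of unity around each corner. If the linear result is only available locally, I would first localize: multiply $u$ by smooth cutoffs $\chi_j$ supported near $\mat c_j$. The function $\chi_ju$ then satisfies a Poisson problem in an infinite sector with datum $\chi_jg-2\nabla\chi_j\cdot\nabla u-u\Delta\chi_j\in\L^2$, where the commutator terms lie in $\L^2$ because $u\in\spcH^1$. Standard interior and edge $\spcH^2$-regularity then handles the remainder $(1-\sum_j\chi_j)u$. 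The semilinearity itself causes no difficulty, since linear growth already places the right-hand side in $\L^2$ without any bootstrapping.
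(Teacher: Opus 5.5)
Your proposal is correct and follows essentially the same route as the paper. The derivative bounds \eqref{eq:f' bounds} give $|f(\x,u)|\le|f(\x,0)|+(\rho+\mu_f)|u|$, hence $f(\cdot,u)\in\L^2(\Omega)\subset\spcL_{\bm{\mathsf{\beta}}}(\Omega)$, and the conclusion then follows from the linear weighted regularity theory of Babu\v{s}ka et al., which the paper invokes by deferring to \cite[Thm.~2.6]{SpicherWihler2025}; your extra remarks on measurability, the meaning of $\beta_j>1-\nicefrac{\pi}{\omega_j}$, and the localization fallback are sound but go beyond what the paper's proof needs.
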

    \begin{proof}
        We observe that $f(\cdot,u)\in  \spcL_{\bm{\mathsf{\beta}}}(\Omega)$ for $u\in \VV$. Indeed, by the fundamental theorem of calculus and the bounds \eqref{eq:f' bounds} it holds 
        \begin{equation*}
            |f(\x,u)| \leq |f(\x,0)| + \int_0^1 |f_u(\x,su)|\,|u| \; \ds \leq |f(\x,0)| + (\rho+\mu_f)|u|,
        \end{equation*}
        for almost every $\x\in \Omega$, which implies that $f(\cdot,u)\in \spcL^2(\Omega)\subset \spcL_{\bm{\mathsf{\beta}}}(\Omega)$
        thanks to Asm.~\ref{Ass: f}\eqref{Ass: f L2}. The proof is then completed following the lines of \cite[Thm.~2.6]{SpicherWihler2025}.        
    \end{proof}
    
\section{Iterative approximation}\label{sc:scheme}

We will now turn our attention to the pseudo-time iteration scheme~\eqref{eq:discretization}, and will discuss its convergence, with a particular focus on hierarchical Galerkin discretizations. Proofs can be found in \cite[\S2 and \S3]{Wih}.

\subsection{Iteration on the full space}
Consider a fixed time-step $\Delta t > 0$, and an appropriate initial guess $u^0 \in \L^2(\Omega)$. Then, for a given $u^n$, $n \geq 0$, the weak form of~\eqref{eq:discretization} is to seek $u^{n+1} \in \VV$ such that
    \begin{equation}\label{eq: iteration scheme}
        \frac{1}{\Delta t} \into (u^{n+1}-u^{n})v \; \dx + \into \grad u^{n+1}\cdot  \grad v \; \dx = \into f(\x,u^{n})v\; \dx \qquad \forall v \in \VV.
    \end{equation}
    It is important to note that the iteration \eqref{eq: iteration scheme}, for a given $u^n$, represents a \textit{linear} problem for $u^{n+1}$, meaning {that} it can be interpreted as an \textit{iterative linearization} of \eqref{Pb:main PDE}. In fact, it can be expressed equivalently as
    \begin{equation}\label{eq:forms iteration scheme}
        \B_{\Delta t}(u^{n+1},v) = \ell_{\Delta t}(u^n;v) \qquad \forall v \in \VV,
    \end{equation}
    where, for $\lambda>0$, we define the \textit{bilinear form} $\B_\lambda:\VV\times \VV \rightarrow \R$ by
    \begin{equation}\label{eq:B form}
        \B_\lambda(u,v) := \into \lambda \grad u \cdot \grad v + uv \; \dx \qquad u,v \in \VV,
    \end{equation}
    and, for a given $y \in \L^2(\Omega)$, the \textit{linear form} $\ell_\lambda(y;\cdot): \VV \rightarrow \R$ by
    \begin{equation}\label{eq:l form}
        \ell_\lambda(y;v) := \lambda \into g_\lambda(\x,y)v \; \dx \qquad v \in \VV,
    \end{equation}
    cf.~\eqref{eq: g_lambda}. {Moreover,} recalling \eqref{eq:1st variation}, for {any} $u\in \VV$, we notice the identity
    \begin{equation*}
        \la \E'(u),v\ra = \lambda^{-1} (\B_\lambda(u,v)-\ell_\lambda(u;v)) \qquad \forall v\in \VV.
    \end{equation*}
    
    The bilinear form $\B_\lambda$ from \eqref{eq:B form} defines an inner product on $\VV \times \VV$ {that} induces the norm from \eqref{eq:lambda norm}. Specifically, for a fixed $\lambda > 0$, this norm is equivalent to the standard $\H^1$-norm, meaning that the space $\VV$ endowed with \eqref{eq:B form} is a Hilbert space. Furthermore, if Asm.~\ref{Ass: f} holds for a fixed $\Delta t \in \Lambda_f(\rho)$, cf. \eqref{eq: Lambda(rho)}, then the linear form $\ell_{\Delta t}(u^n;\cdot)$ is bounded, and hence the iteration \eqref{eq:forms iteration scheme} is well-defined for all $n \geq 0$ and any initial guess $u^0 \in \L^2(\Omega)$; see~\cite[Lem.~3.1]{Wih} and \cite[Prop.~3.2]{Wih}, respectively.

\subsection{Iteration in closed subspaces}\label{sc:iteration subspace}
    Let now $\WW \subset \VV$ be a closed subspace of $\VV$ (e.g., a finite-dimensional Galerkin subspace, or $\VV$ itself). We now restrict the weak formulation \eqref{Pb:weak form} to $\WW$, namely:
    \begin{equation}\label{Pb:weak form W}
        u^\star \in \WW: \qquad \into \grad u^\star \cdot \grad v \; \dx = \into f(\x,u^\star)v\; \dx \qquad \forall v \in \WW.
    \end{equation}
    This corresponds to the Euler-Lagrange equation for critical points of the energy functional $\E$ from \eqref{eq:energy func} on the subspace $\WW$. In particular, we have
    \begin{equation}\label{eq:Galerkin orthogonality}
        u^\star\in\WW: \qquad \la \E'(u^\star),v\ra =0 \qquad \forall v\in\WW,
    \end{equation}
    where the first variation $\E'$ is defined in \eqref{eq:1st variation}.
    
    In accordance with \eqref{eq:forms iteration scheme}, starting with some initial guess $u^0 \in \L^2(\Omega)$, we define the \textit{iterative linearization} scheme
    \begin{equation}\label{eq:forms iteration scheme W}
        u^{n+1} \in \WW: \qquad \B_{\Delta t}(u^{n+1},v)=\ell_{\Delta t}(u^n;v) \qquad \forall v \in \WW,
    \end{equation}
    {for $n\ge 0$.} With arguments {similar} to \cite[Prop.~3.2]{Wih}, {if $\Delta t \in \Lambda_f(\rho)$, then} the {problem} \eqref{eq:forms iteration scheme W} is well-posed for all $n \geq 0$. 

It can be shown that the pseudo-time iteration scheme~\eqref{eq:forms iteration scheme W} converges provided that the time step $\Delta t>0$ is chosen sufficiently small, and that the energy functional $\E$ from~\eqref{eq:energy func} is weakly coercive. More precisely, we impose the following conditions.

    \begin{assumption}[Variational stability]\label{Ass:convergence}
        Let $\rho>0$ such that Asm.~\ref{Ass: f}\eqref{Ass: fu bound} is fulfilled, and let $\Delta t \in \Lambda_f(\rho)$. {We choose $\Delta t>0$ such that
        \begin{enumerate}[(i)]
            \item \label{Ass: delta_t bound}$\nicefrac{1}{\Delta t}>\nicefrac{1}{2}\max\{\rho+\mu_f-\nicefrac{1}{\CP},0\}$.
        \end{enumerate}
        Furthermore, we suppose that}
        \begin{enumerate}[(i)]
        \setcounter{enumi}{1}
        \item \label{Ass: E weak coercivity} $\E$ is weakly coercive, i.e., for every $v\in \VV$, we have $\E(v)\rightarrow +\infty$ whenever $\|v\|_{\Delta t}\rightarrow \infty$.
        \end{enumerate}
    \end{assumption}
    
    The above assumption is a stability condition for {the iteration}~\eqref{eq:forms iteration scheme W}. 
    In particular, Asm.~\ref{Ass:convergence}\eqref{Ass: delta_t bound} implies that the corresponding sequence of energies $\{\E(u^n)\}_{n\ge 0}$ is monotone decreasing, see \cite[Prop.~3.4]{Wih}, which, in combination with Asm.~\ref{Ass:convergence}\eqref{Ass: E weak coercivity}, yields that the sequence $\{u^n\}_{n\geq 0}\subset \WW$ is bounded. As a consequence, the following convergence result holds, see~\cite[Thm.~3.10]{Wih} and~\cite[Cor.~3.12]{Wih}, which will be applied repeatedly in the ensuing sections.
    
    \begin{theorem}[Convergence in closed subspaces]\label{thm: convergence in closed sbspc}
        Let $\WW\subseteq \VV$ be a closed linear subspace, and let Asm.~\ref{Ass:convergence} be fulfilled. Then, for any initial guess $u^0\in\L^2(\Omega)$, the sequence $\{u^n\}_{n\geq 0}\subset \WW$ generated by the iteration \eqref{eq:forms iteration scheme W} has a subsequence $\{u^{n_k}\}_{k\geq 0}$ that converges weakly in $\WW$ and strongly in $\mathrm{L}^2(\Omega)$ to a solution  $u^\star\in \WW$ of~\eqref{Pb:weak form W}; moreover, if $\dim(\WW)<\infty$ (e.g., in the case of a {discrete} subspace), then the convergence to $u^\star$ is strong in~$\WW$.
    \end{theorem}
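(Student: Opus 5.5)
The plan is to run the classical energy-descent argument for gradient-flow-type iterations, working throughout in the Hilbert space $(\WW,\B_{\Delta t})$.

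\emph{Step 1: energy decrease.} Test \eqref{eq:forms iteration scheme W} with $v=u^{n+1}-u^n\in\WW$; here $u^n\in\WW$ for $n\ge1$, and the first step can be treated separately. Write $\E(u^{n+1})-\E(u^n)$ as the gradient part plus $-\into (F(\x,u^{n+1})-F(\x,u^n))\dx$. Expand $F$ by Taylor's formula with the one-sided bounds \eqref{eq:f' bounds} on $f_u$. For the upper bound $f_u<\rho$, $F(\x,\cdot)-\tfrac\rho2(\cdot)^2$ is concave. Combining this with the identity $a\cdot(a-b)=\tfrac12(|a|^2-|b|^2+|a-b|^2)$ for the gradient terms should give
\begin{equation*}
\E(u^{n+1})+\Big(\frac{1}{\Delta t}-\frac{\rho+\mu_f}{2}\Big)\|u^{n+1}-u^n\|_{\L^2(\Omega)}^2+\frac12\|\grad(u^{n+1}-u^n)\|_{\L^2(\Omega)}^2\le \E(u^n).
\end{equation*}
Using Poincar\'e \eqref{eq:Poincare L2} on the gradient term then yields a coefficient $\tfrac1{\Delta t}+\tfrac{1}{2\CP}-\tfrac{\rho+\mu_f}{2}>0$ in front of the $\L^2$-increment, by Asm.~\ref{Ass:convergence}\eqref{Ass: delta_t bound}. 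So $\E(u^{n+1})+c\|u^{n+1}-u^n\|_{\Delta t}^2\le\E(u^n)$ for some $c>0$. Getting exactly the threshold in Asm.~\ref{Ass:convergence}\eqref{Ass: delta_t bound} is the delicate point. I expect the main obstacle to be using the asymmetric bounds $-\rho-\mu_f\le f_u<\rho$ correctly, so that the constant closes with precisely this assumption.

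\emph{Step 2: boundedness and vanishing increments.} Since $\E(u^n)\le\E(u^1)$, weak coercivity (Asm.~\ref{Ass:convergence}\eqref{Ass: E weak coercivity}) gives $\sup_n\|u^n\|_{\Delta t}<\infty$. Hence $\E$ is bounded below along the sequence, because $\E$ is continuous on $\VV$ and $|F(\x,s)|\lesssim |f(\x,0)||s|+s^2$. Telescoping Step 1 then gives $\sum_n\|u^{n+1}-u^n\|_{\Delta t}^2<\infty$, so $\|u^{n+1}-u^n\|_{\Delta t}\to0$.

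\emph{Step 3: extraction and passage to the limit.} By reflexivity of the closed subspace $\WW$ and Rellich compactness, there is a subsequence with $u^{n_k}\rightharpoonup u^\star$ weakly in $\WW$ and strongly in $\L^2(\Omega)$. By Step 2, $u^{n_k+1}$ has the same limits. Rewrite \eqref{eq:forms iteration scheme W} as
\begin{equation*}
\tfrac1{\Delta t}\into(u^{n_k+1}-u^{n_k})v\dx+\into\grad u^{n_k+1}\cdot\grad v\dx=\into f(\x,u^{n_k})v\dx .
\end{equation*}
The first term tends to zero, and the second converges by weak convergence. The right-hand side converges because $f$ is Lipschitz in $s$: this follows from \eqref{eq: g Lipschitz}, since $f=g_{\Delta t}-\Delta t^{-1}s$, with the strong $\L^2$ convergence. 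Hence $u^\star$ solves \eqref{Pb:weak form W}.

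If $\dim\WW<\infty$, weak and strong convergence coincide, so the convergence is strong in $\WW$.
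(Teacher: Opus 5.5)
Your proposal is correct and follows the same route as the paper, which defers to \cite{Wih}: energy decrease under Asm.~\ref{Ass:convergence}\eqref{Ass: delta_t bound} (\cite[Prop.~3.4]{Wih}), boundedness from weak coercivity, then Rellich compactness and passage to the limit in the linear iteration.

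There is one slip in Step~1. Your displayed inequality follows from the \emph{lower} bound $f_u\ge-\rho-\mu_f$ in \eqref{eq:f' bounds}, i.e.\ from convexity of $F(\x,\cdot)+\tfrac{\rho+\mu_f}{2}(\cdot)^2$, which gives $F(\x,u^{n+1})-F(\x,u^n)-f(\x,u^n)(u^{n+1}-u^n)\ge-\tfrac{\rho+\mu_f}{2}|u^{n+1}-u^n|^2$. The concavity you cite from $f_u<\rho$ only bounds this remainder from above, which is the wrong direction for energy decrease. Also, to obtain $c\|u^{n+1}-u^n\|_{\Delta t}^2$, apply Poincar\'e to only a fraction $1-\eta$ of $\tfrac12\|\grad(u^{n+1}-u^n)\|_{\L^2(\Omega)}^2$; the strict inequality in Asm.~\ref{Ass:convergence}\eqref{Ass: delta_t bound} allows this, although vanishing $\L^2$-increments alone already suffice for Step~3.
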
   

\begin{remark}\label{rmk:special case}
In the special case $\rho \leq \nicefrac{1}{\CP}$, with $\CP > 0$ from \eqref{eq:Poincare L2}, we make the following observations.
\begin{enumerate}[(i)]
\item In the setting where Asm.~\ref{Ass: f}\eqref{Ass: fu bound} holds for $\rho \leq \nicefrac{1}{\CP}$, we note that Asm.~\ref{Ass:convergence}\eqref{Ass: E weak coercivity} is fulfilled according to Rmk.~\ref{rmk:special weak coercivity}. In addition, for $\Delta t\in\Lambda_f(\rho)$, it holds that $\mu_f< \nicefrac{2}{\Delta t}$, cf.~\eqref{eq:def mu}, whence we observe that
    $
     \rho+\mu_f-\nicefrac{1}{\CP}\le \mu_f< \nicefrac{2}{\Delta t}, 
    $
    and Asm.~\ref{Ass:convergence}\eqref{Ass: delta_t bound} is satisfied for any $\Delta t\in\Lambda_f(\rho)$.  Indeed, this follows from the bound
    \begin{equation*}
        |\sigma_f(\lambda_1)-\sigma_f(\lambda_2)|\leq \left|\lambda_1^{-1}-\lambda_2^{-1}\right| \qquad \forall \lambda_1,\lambda_2\in \Lambda_f(\rho),
    \end{equation*}
    which implies that, for any $\lambda\in\Lambda_f(\rho)$, there is $\varepsilon>0$ such that $\lambda+\varepsilon\in \Lambda_f(\rho)$, and we conclude that $\lambda<\sup\Lambda_f(\rho)$.     
\item Moreover, if 
    $\mu_f= 0$
    then it is even possible to let $\nicefrac{1}{\Delta t}\to0$,  allowing us to neglect the mass term on the left-hand side of the iterative scheme \eqref{eq: iteration scheme}, and thereby reducing it to the iteration
        \begin{equation}\label{eq:reduced scheme}
            \into \grad u^{n+1}\cdot \grad v \; \dx = \into f(\x,u^n)v\; \dx \qquad \forall v\in \VV.
        \end{equation}

        \item
    Finally, for any closed subspace $\WW\subset\VV$ the mapping $\mathsf{T}_{\Delta t}: \L^2(\Omega) \rightarrow \WW$ defined by
    \begin{equation}\label{def:op T}
        y\mapsto\mathsf{T}_{\Delta t}(y):\qquad
        \B_{\Delta t}(\mathsf{T}_{\Delta t}(y),v)=\ell_{\Delta t}(y;v) \qquad \forall v \in \WW,
    \end{equation}
    satisfies the stability bound
    \begin{subequations}\label{eq:stability bound}
    \begin{equation}\label{eq:stability bound1}
        \|\mathsf{T}_{\Delta t}(y)-\mathsf{T}_{\Delta t}(z)\|_{\Delta t} \leq r(\Delta t) \|y-z\|_{\Delta t}, \qquad 0<r(\Delta t)<1,
    \end{equation}
    where 
    \begin{equation}\label{eq:rt}
    r(\Delta t):=\theta(\Delta t)\sigma_f(\Delta t) \Delta t,
    \end{equation}
    \end{subequations}
    for $\Delta t \in \Lambda_f(\rho)$; see~\cite[Thm.~3.3]{Wih}. Notably, the restriction $\mathsf{T}_{\Delta t}|_{\WW}:\WW \rightarrow \WW$ acts as a contraction. These observations result in the fact that the entire sequence generated by the iterative scheme~{\eqref{eq:reduced scheme}} converges strongly to a \textit{unique} critical point of $\E$ in $\WW$ with respect to the norm \eqref{eq:lambda norm} with $\lambda=\Delta t$, independently of the initial guess $u^0 \in \L^2(\Omega)$. 

    \end{enumerate}
\end{remark}

\subsection{Convergence on sequences of hierarchical Galerkin subspaces}\label{sc: convergence}

For the purpose of this paper, the generic discrete space~$\WW$ used in the previous section~\S\ref{sc:iteration subspace} will be specified by a sequence $\{\VV_N\}_{N\ge0}$ of discrete Galerkin subspaces that will be obtained from an initial (low-dimensional) space $\VV_0\subset \VV$ by successive enrichments. More specifically, later on in \S\ref{sc:ocr}, a sequence of hierarchically refined finite element spaces will be considered. The following assumption ensures that the sequence $\{\VV_N\}_{N\geq 0}$ resolves the underlying continuous space $\VV$ in an appropriate way.

    \begin{assumption}\label{ass:Galerkin density}
         Consider a sequence of finite-dimensional subspaces 
         \[
         \VV_0 \subset \VV_1 \subset \ldots\subset\VV_N \subset\ldots\subset \VV,\qquad \dim(\VV_N)<\infty,
         \]
         that satisfies the density property
        \begin{equation*}
            \overline{\bigcup_{N\geq 0} \VV_N}=\VV,
        \end{equation*}
        where the closure refers to the $\VV$-norm, see~\S\ref{sc:function space}, which, in turn, is equivalent to $\|\cdot\|_\lambda$ for any $\lambda>0$.
    \end{assumption}
    
    The approximation of solutions for~\eqref{Pb:main PDE} is based on applying the \emph{linear} iterative procedure~\eqref{eq:forms iteration scheme W} on a given discrete (and hence closed) subspace $\WW:=\VV_N\subset \VV$, $\dim(\VV_N)<\infty$, which can be refined further in terms of a hierarchical sequence $\VV_N\subset \VV_{N+1}\subset\ldots$ in order to continuously improve the quality of the outcome. More precisely, for a given $N\ge 0$, the resulting sequence $\{u^n_N\}_{n\geq 0}\subset\VV_N$ obtained by~\eqref{eq:forms iteration scheme W} converges strongly (up to a subsequence) to an exact Galerkin solution $\un\in \VV_N$, see~Thm.~\ref{thm: convergence in closed sbspc}, i.e.,
    \begin{equation}\label{eq:un}
    \un \in \VV_N: \qquad \into \grad \un \cdot \grad v \; \dx = \into f(\x,\un)v\; \dx \qquad \forall v \in \VV_N,
    \end{equation}
    cf.~\eqref{Pb:weak form W}.
    Then, on an enriched space $\VV_{N+1}$, an initial guess $u_{N+1}^0 \in \VV_{N+1}$ for the iteration~\eqref{eq:forms iteration scheme W} on $\WW=\VV_{N+1}$ is immediately obtained through the canonical embedding of the Galerkin approximation $\un\in \VV_N$ from~\eqref{eq:un} into $\VV_{N+1}$, viz.
    \begin{equation}\label{eq:u0}
    u^0_{N+1}:=\un\in\VV_N\hookrightarrow \VV_{N+1}.
    \end{equation}

    By the same arguments as in the proof of Thm.~\ref{thm: convergence in closed sbspc}, along a sequence of hierarchical subspaces that satisfies Asm.~\ref{ass:Galerkin density}, we can extract a subsequence of the associated (limit) approximations, again denoted by $\{\un\}_{N\geq 0}$, which converges strongly to a function $u$ in $\L^2(\Omega)$ and weakly in $\VV$. Furthermore, as a consequence of the same theorem we have that $\E'(\un)\overset{\star}{\rightharpoonup} \E'(u)$, i.e.,
    \begin{equation}\label{eq:weak* convergence}
        \lim_{N\rightarrow \infty}\la \E'(\un),v \ra = \la \E'(u),v\ra \qquad \forall v \in \VV.
    \end{equation}

    \begin{proposition}\label{prop: u critical Galerkin}
        Suppose Asm.~\ref{Ass: f}, Asm.~\ref{Ass:convergence} and Asm.~\ref{ass:Galerkin density} are satisfied for some $\rho>0$ and $\Delta t\in \Lambda_f(\rho)$. The limit $u\in \VV$ of the sequence $\{\un\}_{N\geq 0}$ in \eqref{eq:weak* convergence} is a critical point of $\E$, and hence a weak solution of \eqref{Pb:main PDE}.
    \end{proposition}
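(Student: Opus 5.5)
We must show $\la \E'(u),v\ra=0$ for every $v\in\VV$. The plan is to test \eqref{eq:weak* convergence} against suitable discrete approximations of $v$ and combine Galerkin orthogonality with a uniform bound on $\E'(\un)$.

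\emph{Step 1: uniform bound on $\E'(\un)$.} The subsequence $\{\un\}_{N\ge0}$ converges weakly in $\VV$, so it is bounded in $\VV$, and hence in $\|\cdot\|_{\Delta t}$; say $\|\un\|_{\Delta t}\le M$. This is exactly the boundedness that produced the weakly convergent subsequence, coming from the energy decrease of \cite[Prop.~3.4]{Wih} together with Asm.~\ref{Ass:convergence}\eqref{Ass: E weak coercivity}. By Lemma~\ref{lemma: E' Lipschitz} with $\lambda=\Delta t$,
\[
|\la \E'(\un),w\ra|\le |\la \E'(0),w\ra| + L_{\E'}(\Delta t)\,M\,\|w\|_{\Delta t}.
\]
Moreover, $|\la \E'(0),w\ra|=|\into f(\x,0)w\dx|\le \|f(\cdot,0)\|_{\L^2(\Omega)}\|w\|_{\L^2(\Omega)}$, which is finite by Asm.~\ref{Ass: f}\eqref{Ass: f L2}. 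Hence $\|\E'(\un)\|_{\VV'}\le C$ uniformly in $N$.

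\emph{Step 2: density and Galerkin orthogonality.} Fix $v\in\VV$ and $\varepsilon>0$. By Asm.~\ref{ass:Galerkin density} there exist $K$ and $v_K\in\VV_K$ with $\|v-v_K\|_{\VV}<\varepsilon$. Because the spaces are nested, $v_K\in\VV_N$ for all $N\ge K$, so \eqref{eq:Galerkin orthogonality} on $\WW=\VV_N$ gives $\la \E'(\un),v_K\ra=0$ for all such $N$; for the extracted subsequence this holds for all sufficiently large indices. Then
\[
|\la\E'(u),v\ra|\le |\la \E'(u)-\E'(\un),v\ra| + |\la\E'(\un),v-v_K\ra| \le |\la \E'(u)-\E'(\un),v\ra| + C\varepsilon.
\]
Letting $N\to\infty$ and using \eqref{eq:weak* convergence}, the first term vanishes, so $|\la\E'(u),v\ra|\le C\varepsilon$. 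Since $\varepsilon$ is arbitrary, $\la\E'(u),v\ra=0$, which is \eqref{Pb:weak form}. Alternatively, one can apply \eqref{eq:weak* convergence} directly to $v_K$ to get $\la\E'(u),v_K\ra=0$ and then pass to the limit $v_K\to v$ using only $\E'(u)\in\VV'$; this variant needs no uniform bound.

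\emph{Main obstacle.} The delicate point is justifying \eqref{eq:weak* convergence} itself, namely the convergence of the nonlinear term $\into f(\x,\un)v\dx\to\into f(\x,u)v\dx$. This follows from the strong $\L^2$ convergence $\un\to u$ combined with the Lipschitz bound \eqref{eq: g Lipschitz}: since $|f(\x,a)-f(\x,b)|\le(\sigma_f+\Delta t^{-1})|a-b|$, we get $\|f(\cdot,\un)-f(\cdot,u)\|_{\L^2}\to0$. The gradient term converges by weak convergence in $\VV$. The paper takes this as given, so the proof reduces to Step 2; the compactness (Rellich) and weak coercivity inputs are where the real content lies.
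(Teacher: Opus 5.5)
Your proof is correct and follows the paper's argument. It uses the same three-term splitting: Galerkin orthogonality on a discrete approximant of $v$, a uniform $\VV'$-bound on $\E'(\un)$, and the weak$*$ convergence \eqref{eq:weak* convergence}, with a fixed $v_K$ and an $\varepsilon$-argument in place of the paper's sequence $v_N\to v$. Two differences from the paper are both valid: you derive the uniform bound explicitly from Lemma~\ref{lemma: E' Lipschitz} (the paper reads it off from weak$*$ convergence, i.e.\ Banach--Steinhaus), and your variant, which uses $\la\E'(u),v_K\ra=0$ together with density, avoids that bound altogether and is slightly cleaner.
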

    \begin{proof}
        Pick an arbitrary $v\in \VV$. Thanks to Asm.~\ref{ass:Galerkin density}, we can find a sequence $\{v_N\}_{N\geq 0}$ such that $v_N \in \VV_N$, and $v_N\to v$ strongly in $\VV$ as $N\to\infty$. This motivates the decomposition
        \begin{equation*}
            \la \E'(u), v\ra = \la \E'(\un), v_N\ra + \la \E'(\un),v-v_N\ra + \la \E'(u)-\E'(\un),v\ra.
        \end{equation*}
        The first term of the right-hand side vanishes thanks to~\eqref{eq:Galerkin orthogonality}. {In addition, from~\eqref{eq:weak* convergence}}, we know that $\E'(\un)$ is uniformly bounded in $\VV'$, and so
        \begin{equation*}
            |\la \E'(\un),v-v_N\ra| \leq \|\E'(\un)\|_{\VV'}\|v-v_N\|_\VV \rightarrow 0,
        \end{equation*}
        as $N\rightarrow\infty$. As for the remaining term, it is immediate from \eqref{eq:weak* convergence} that
        \[
            \lim\limits_{N\rightarrow \infty}\la \E'(u)-\E'(\un),v\ra = 0.
        \]
        This completes the argument.
    \end{proof}
   
    \section{Rate-optimal convergence analysis}\label{sc:ocr}
    In this section, we show optimal convergence rate estimates for the Galerkin approximations~from~\eqref{eq:un} on so-called graded meshes toward the corners of the polygon $\Omega$. To this end, for the error between a solution $u\in\VV$ of~\eqref{Pb:main PDE} and a corresponding Galerkin approximation $\un\in\VV_N$, 
    we first derive quasi-optimal estimates valid under specific assumptions, and then, more generally, using an Aubin-Nitsche-type duality argument, we derive the optimal rate
    \begin{equation}\label{eq:OCR}
        \|u-\un\|_{\Delta t} \leq C\dim(\VV_N)^{-\nicefrac{1}{2}},
    \end{equation}
    for each sufficiently large $N$, where $C>0$ is a constant independent of~$\dim(\VV_N)$.

\subsection{Finite element approximations on graded meshes}\label{sc:graded meshes}
The approximation of functions in the weighted Sobolev space $\htto$ within a finite element setting mandates the use of suitably refined meshes that are able to properly resolve possible elliptic corner singularities. To this end, we recall the family of graded meshes introduced in~\cite{Babuska1979}.

    \begin{definition}[Finite spaces on graded meshes] \label{def:triangulation type}
        Let $\bm{\mathsf{\beta}}=(\beta_1,\ldots,\beta_m)\in[0,1)^m$ be a weight vector associated with the corners $\mat c_1,\ldots,\mat c_m$ of the polygon~$\Omega$, and $\Pb$ the corresponding weight function from \eqref{def:Pb}. Then, a {regular (conforming) and shape-regular triangulation} $\T=\{T\}_{T\in\T}$ of mesh size $h=\max\{h_T:\,T\in\T\}>0$, where $h_T$ denotes the diameter of any triangle $T\in\T$, is called a \emph{graded mesh} if there exists a constant $\kappa\ge 1$ such that, for all $T\in\T$, it holds
    \begin{align*}
            \kappa^{-1}\sup_T\Pb \leq \nicefrac{h_T}{h} \leq \kappa \inf_T\Pb,&\qquad\text{if $\Pb>0$ on $\overline{T}$},
            \intertext{and}
            \kappa^{-1} \leq \frac{h_T}{h \sup_{ T}\Pb}\leq \kappa,&\qquad\text{if there is a corner $\mat c_i$ of $\Omega$ with $\mat c_i\in\overline{T}$.}
    \end{align*}
    Furthermore, for a given triangulation $\T$, we define the associated $\mathbb{P}_1$-finite element space by
    \begin{equation}\label{eq:fes}
            \mathbb{S}^{1}(\Omega,\T) := \big\{u\in \VV \; : \; u|_{T} \in \mathbb{P}_1(T)\quad \forall T \in \T\big\},
        \end{equation}
        where $\mathbb{P}_1(T)$ denotes the set of all linear polynomials on a triangle $T\in\T$; this space consists of all continuous, element-wise linear functions on the graded mesh $\T$, with zero boundary values along~$\partial \Omega$.
    \end{definition}

The family of finite-dimensional spaces $\mathbb{S}^{1}(\Omega,\T)$, which are based on graded meshes~$\T$, is able to approximate functions in $\htto$ at an optimal rate as $h\to0$, i.e., qualitatively comparable to the approximation of $\spcH^2$- functions on uniform meshes. More precisely, referring to~\cite[Lem.~4.5]{Babuska1979}, we have the interpolation bound
        \begin{equation}\label{eq:graded estimate}
            \|\grad(u-I_hu)\|_{\Ltwo}\leq \GM \dim(\mathbb{S}^{1}(\Omega,\T))^{-\nicefrac{1}{2}} \|u\|_{\htt(\Omega)},
        \end{equation}
where $I_h:\,\VV\to\mathbb{S}^{1}(\Omega,\T)$ is the standard \emph{nodal interpolant} on~$\T$, and $\GM>0$ is a constant only depending on the domain $\Omega$, on the weights $\bm\beta$, and on $\kappa$; applying the embedding~\eqref{eq:emb}, we note that
\begin{equation}\label{eq:Istab1}
\|I_hv\|_{\mathrm{C}^0(\Omega)}
\le\|v\|_{\mathrm{C}^0(\Omega)}
\le C\|v\|_{\htto}
\qquad\forall v\in\htto.
\end{equation}

Moreover, it can be seen that
\begin{equation}\label{eq:dim S1}
        \dim(\mathbb{S}^{1}(\Omega,\T))\lesssim h^{-2},
\end{equation}
see~\cite[Lem.~4.1]{Babuska1979}, which means that the corner refinements applied in graded meshes are sufficiently local as to preserve the order of the number of degrees of freedom occurring in uniform meshes. In particular, we can construct a sequence of (hierarchical graded) meshes $\{\mathcal{T}_N\}_{N\geq 0}$, enumerated by integer indices~$N\ge 0$, so that the corresponding sequence of finite element spaces $\VV_N:=\mathbb{S}^{1}(\Omega,\mathcal{T}_N)$ fulfills Asm.~\ref{ass:Galerkin density} as $N\to\infty$.

\subsection{Optimal rate under particular conditions}\label{sc:OCR cases}
    Throughout this section, we assume that Asm.~\ref{Ass: f} and Asm.~\ref{Ass:convergence} are satisfied for some $\rho>0$ and $\Delta t \in \Lambda_f(\rho)$. For each index $N\geq 0$, we select $\VV_N:=\mathbb{S}^{1}(\Omega,\mesh_N)$ as above, with a suitable mesh $\mesh_N$ of size $h(N)>0$, cf.~\eqref{eq:dim S1}. We construct a sequence of Galerkin approximations $\{\un\}_{N\geq 0}$, cf.~\eqref{eq:un}, and a limit $u\in \VV\cap \htto$ as in Prop.~\ref{prop: u critical Galerkin}. Since discrete solutions lie within finite-dimensional spaces $\VV_N$ for each sufficiently large $N$, it is meaningful for the subsequent discussion to suppose throughout that 
    \begin{equation}\label{eq:sol}
    u\not\in \bigcup_{N\geq 0}\VV_N.
    \end{equation}
    
    We now analyze three distinct contexts that provide a structured framework to demonstrate \eqref{eq:OCR} in the general case. These cases depend on the properties of the nonlinearity $f$ in~\eqref{Pb:main PDEa}, and on their respective impact on the energy functional $\E$ from~\eqref{eq:energy func}.

\subsubsection{Contraction regime}\label{ssc:OCR special case}
    We begin with the special case $\rho \leq \nicefrac{1}{\CP}$, where the contraction property \eqref{eq:stability bound} holds. Indeed, this situation is of particular interest as it allows for a possibly discontinuous derivative of the nonlinearity, which is not admissible in general.

    \begin{proposition}[Quasi-optimality in the special case~$\rho\le\nicefrac{1}{\CP}$]\label{prop:Cea special}
        Given an arbitrary closed linear subspace $\WW\subset \VV$. Then, for any initial guess $u^0\in\L^2(\Omega)$, the sequence $\{u^n\}_{n\geq 0}\subset \WW$ generated by the iteration~\eqref{eq:forms iteration scheme W} has a limit $u^\star\in \WW$ that solves~\eqref{Pb:weak form W}. Furthermore, there exists a constant $C(\Delta t)>0$ such that
        \begin{equation}\label{eq:Cea special}
            \|u-u^\star\|_{\Delta t} \leq C(\Delta t)\inf_{w\in \WW}\|u-w\|_{\Delta t},
        \end{equation}
        where $u\in \VV\cap \htto$ is the unique solution of \eqref{Pb:weak form}.
    \end{proposition}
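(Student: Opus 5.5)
Throughout, $\rho\le\nicefrac{1}{\CP}$ and $\Delta t\in\Lambda_f(\rho)$, so Rmk.~\ref{rmk:special case} applies. The plan is to treat the nonlinear problem as a fixed-point problem for the map $\mathsf{T}_{\Delta t}$ from \eqref{def:op T}, and to derive \eqref{eq:Cea special} by comparing this map on $\WW$ with its analogue on $\VV$.

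\emph{Step 1: convergence of the iteration.} By Rmk.~\ref{rmk:special case}(iii), the restriction $\mathsf{T}^{\WW}_{\Delta t}|_{\WW}$ is a contraction on the Hilbert space $(\WW,\|\cdot\|_{\Delta t})$ with constant $r:=r(\Delta t)<1$, by \eqref{eq:stability bound1}. Starting from $u^0\in\L^2(\Omega)$, the first iterate $u^1=\mathsf{T}^{\WW}_{\Delta t}(u^0)$ already lies in $\WW$. Banach's fixed-point theorem then gives strong convergence of the whole sequence to a unique fixed point $u^\star\in\WW$. The fixed-point relation $\B_{\Delta t}(u^\star,v)=\ell_{\Delta t}(u^\star;v)$ for all $v\in\WW$ is exactly $\la\E'(u^\star),v\ra=0$, i.e.\ \eqref{Pb:weak form W}.

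\emph{Step 2: the continuous solution.} Applying the same argument with $\WW=\VV$ gives a unique critical point $u$ of $\E$ in $\VV$. Uniqueness holds because any solution of \eqref{Pb:weak form} is a fixed point of the contraction $\mathsf{T}^{\VV}_{\Delta t}$. Thm.~\ref{thm:beta} then places $u$ in $\VV\cap\htto$.

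\emph{Step 3: the quasi-optimality estimate.} Let $P_{\WW}$ denote the $\B_{\Delta t}$-orthogonal projection onto $\WW$. Since $\B_{\Delta t}$ is the inner product inducing $\|\cdot\|_{\Delta t}$, we have the identity $\mathsf{T}^{\WW}_{\Delta t}(y)=P_{\WW}\mathsf{T}^{\VV}_{\Delta t}(y)$. Using $u=\mathsf{T}^{\VV}_{\Delta t}(u)$, we write
\[
u-u^\star=(u-P_{\WW}u)+\bigl(\mathsf{T}^{\WW}_{\Delta t}(u)-\mathsf{T}^{\WW}_{\Delta t}(u^\star)\bigr).
\]
The contraction bound \eqref{eq:stability bound1} then gives
\[
\|u-u^\star\|_{\Delta t}\le\|u-P_{\WW}u\|_{\Delta t}+r\|u-u^\star\|_{\Delta t}.
\]
Since $P_{\WW}$ is the best approximation in $\|\cdot\|_{\Delta t}$, this yields \eqref{eq:Cea special} with $C(\Delta t)=(1-r(\Delta t))^{-1}$.

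\emph{Main obstacle.} The delicate point is that \eqref{eq:stability bound1} is stated for arguments measured in $\|\cdot\|_{\Delta t}$, while $\mathsf{T}$ is defined on $\L^2(\Omega)$. We must therefore check that $u$ and $u^\star$ lie in $\VV$, which they do. We must also confirm the factorization $\mathsf{T}^{\WW}_{\Delta t}=P_{\WW}\mathsf{T}^{\VV}_{\Delta t}$, which follows directly by testing \eqref{def:op T} with $v\in\WW$. If the projection identity proved awkward, the fallback is Galerkin orthogonality: $\B_{\Delta t}(u-u^\star,v)=\ell_{\Delta t}(u;v)-\ell_{\Delta t}(u^\star;v)$ for all $v\in\WW$. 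Bounding the right-hand side via \eqref{eq: g Lipschitz} and \eqref{eq:Poincare lambda} by $r\|u-u^\star\|_{\Delta t}\|v\|_{\Delta t}$ reproduces the same estimate.
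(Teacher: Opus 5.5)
Your proof is correct. Steps 1--2 match the paper, but Step 3 takes a different and slightly sharper route.

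The paper does not use the projection $P_{\WW}$. For an arbitrary $w\in\WW$ it expands $\|u-u^\star\|_{\Delta t}^2=\B_{\Delta t}(u-u^\star,u-w)+\B_{\Delta t}(u-u^\star,w-u^\star)$. It then rewrites the second term, via the discrete and continuous fixed-point equations, as differences of $\ell_{\Delta t}$ tested with $w-u$ and with $u-u^\star$. Each difference is bounded by $r(\Delta t)$ using \eqref{eq: g Lipschitz} and \eqref{eq:Poincare lambda}, which gives $C(\Delta t)=\frac{1+r(\Delta t)}{1-r(\Delta t)}$. This is essentially your fallback.

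Your factorization $\mathsf{T}^{\WW}_{\Delta t}=P_{\WW}\mathsf{T}^{\VV}_{\Delta t}$ packages the same two ingredients (Galerkin orthogonality plus the contraction bound) in one line and gives the better constant $C(\Delta t)=(1-r(\Delta t))^{-1}$. Since $u-P_{\WW}u$ is $\B_{\Delta t}$-orthogonal to $\WW$, Pythagoras would even give $(1-r(\Delta t)^2)^{-1/2}$.

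What the paper's term-by-term splitting buys is that it separates the diagonal term $\ell_{\Delta t}(u;u-u^\star)-\ell_{\Delta t}(u^\star;u-u^\star)$ from the off-diagonal one. That is exactly what is reused in Prop.~\ref{prop: Cea general}. There $r(\Delta t)\ge 1$ is allowed, so your contraction step is unavailable. Yet the diagonal term is still controlled by asymptotic monotonicity and $\theta(\Delta t)<1$, while the off-diagonal term only needs boundedness.
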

    \begin{proof}
        Recalling Rmk.~\ref{rmk:special case} and letting $n\to\infty$ in the iteration~\eqref{eq:forms iteration scheme W}, we have $u^n\to u^\star$ strongly in~$\VV$, with
        \[
        \B_{\Delta t}(u^\star,w)=\ell_{\Delta t}(u^\star;w)\qquad\forall w\in\WW,
        \]
        which, in turn, yields~\eqref{Pb:weak form W}. Furthermore, we observe that the solution of~\eqref{Pb:main PDE} solves
        \[
        \B_{\Delta t}(u,v)=\ell_{\Delta t}(u;v)\qquad\forall v\in\VV.
        \]
        Hence, for $w\in\WW$ arbitrary, noticing that $w-u^\star\in\WW$, we decompose the error as
        \begin{align*}
            \|u-u^\star\|^2_{\Delta t}
            &=\B_{\Delta t}(u-u^\star,u-u^\star)\\
            &=\B_{\Delta t}(u-u^\star,u-w)+\B_{\Delta t}(u-u^\star,w-u^\star)\\
            &=\B_{\Delta t}(u-u^\star,u-w) + \ell_{\Delta t}(u;w-u)-\ell_{\Delta t}(u^\star;w-u)\\&\quad  + \ell_{\Delta t}(u;u-u^\star)-\ell_{\Delta t}(u^\star;u-u^\star).
        \end{align*}
        Then, by means of the Cauchy-Schwarz inequality, we obtain
        \begin{equation*}
            |\B_{\Delta t}(u-u^\star,u-w)| 
            \leq \|u-u^\star\|_{\Delta t}\|u-w\|_{\Delta t}.
        \end{equation*}
        Furthermore, using~\eqref{eq:l form},
        we have
        \begin{equation*}
            |\ell_{\Delta t}(u;w-u)-\ell_{\Delta t}(u^\star;w-u)| \leq \Delta t \into |g_{\Delta t}(\x,u)-g_{\Delta t}(\x,u^\star)|\,|w-u|\;\dx.
        \end{equation*}
        Moreover, consecutively employing \eqref{eq: g Lipschitz}, the Cauchy-Schwarz inequality, and \eqref{eq:Poincare lambda}, we deduce
        \begin{align*}
            |\ell_{\Delta t}(u;w-u)-\ell_{\Delta t}(u^\star;w-u)| 
            &\leq r(\Delta t) \|u-u^\star\|_{\Delta t}\|w-u\|_{\Delta t},
        \end{align*}
        with $0<r(\Delta t)<1$ from \eqref{eq:stability bound}. Similarly,
        \begin{equation}\label{eq:Cea contraction}
            |\ell_{\Delta t}(u;u-u^\star)-\ell_{\Delta t}(u^\star;u-u^\star)| \leq r(\Delta t) \|u-u^\star\|^2_{\Delta t}.
        \end{equation}
        Therefore, by the triangle inequality, it follows that
        \begin{equation*}
            \|u-u^\star\|^2_{\Delta t} \leq \left[1+r(\Delta t) \right]\|u-u^\star\|_{\Delta t}\|u-w\|_{\Delta t} + r(\Delta t) \|u-u^\star\|^2_{\Delta t},
        \end{equation*}
        which implies that
        \begin{equation}\label{eq:cea constant}
           \|u-u^\star\|_{\Delta t} \leq \frac{1+r(\Delta t)}{1-r(\Delta t)} \|u-w\|_{\Delta t}.
        \end{equation}
        Exploiting that $w \in \WW$ is arbitrary, we get \eqref{eq:Cea special}.
    \end{proof}
    
    Consequently, the quasi-optimality bound yields the desired optimal rate.

    \begin{corollary}[Optimal rate in the special case~$\rho\le\nicefrac{1}{\CP}$]\label{Cor: OCR Cea special}
        For a graded mesh family $\{\mesh_N\}_{N\ge 0}$, consider the corresponding sequence of finite element spaces $\VV_N = \mathbb{S}^{1}(\Omega,\mesh_N)$, cf.~\eqref{eq:fes}. Then, for each $N\ge 0$, the error between the exact solution $u\in\VV$ of~\eqref{Pb:main PDE} and its Galerkin approximation $\un\in\VV_N$ from~\eqref{eq:un} (which results as the limit of the iteration~\eqref{eq:forms iteration scheme W} on $\WW=\VV_N$) satisfies the bound
        \begin{equation*}
            \|u-u_N^\star\|_{\Delta t} \leq C\dim(\VV_N)^{-\nicefrac{1}{2}}\|u\|_{\htto},\qquad N\ge 0,
        \end{equation*}
        for a constant $C>0$ depending only on $\Omega$, $\Delta t$, and on the triangulation parameters $\boldsymbol{\beta}$ and $\kappa$.
    \end{corollary}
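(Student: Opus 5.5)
We combine the quasi-optimality estimate of Prop.~\ref{prop:Cea special} with the graded-mesh interpolation bound~\eqref{eq:graded estimate}. First, for each $N\ge 0$ we apply Prop.~\ref{prop:Cea special} with $\WW=\VV_N$, which is closed because it is finite-dimensional. Since $\rho\le\nicefrac{1}{\CP}$, Rmk.~\ref{rmk:special case} shows that the whole iteration~\eqref{eq:forms iteration scheme W} converges to the unique discrete critical point $\un$, and $u$ is the unique solution of~\eqref{Pb:weak form}. By Thm.~\ref{thm:beta}, $u$ lies in $\VV\cap\htto$. Estimate~\eqref{eq:Cea special} together with the explicit constant in~\eqref{eq:cea constant} gives
\[
\|u-\un\|_{\Delta t}\le \frac{1+r(\Delta t)}{1-r(\Delta t)}\,\|u-w\|_{\Delta t}\qquad\forall w\in\VV_N .
\]

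We then choose $w:=I_hu\in\VV_N$, the nodal interpolant on $\mesh_N$. This is well defined because of the embedding~\eqref{eq:emb}. To pass from the $\|\cdot\|_{\Delta t}$-norm to the $\spcH^1$-seminorm, we note that $u-I_hu\in\VV$ and use the Poincar\'e inequality~\eqref{eq:Poincare L2}:
\[
\|u-I_hu\|_{\Delta t}^2=\|u-I_hu\|_{\L^2(\Omega)}^2+\Delta t\,\|\grad(u-I_hu)\|_{\L^2(\Omega)}^2\le(\CP+\Delta t)\,\|\grad(u-I_hu)\|_{\L^2(\Omega)}^2 .
\]
Inserting~\eqref{eq:graded estimate} then yields
\[
\|u-\un\|_{\Delta t}\le \frac{1+r(\Delta t)}{1-r(\Delta t)}\,(\CP+\Delta t)^{\nicefrac12}\,\GM\,\dim(\VV_N)^{-\nicefrac12}\,\|u\|_{\htto}.
\]
The resulting constant depends only on $\Delta t$ (through $r(\Delta t)<1$ and $\sigma_f(\Delta t)$), on $\Omega$ (through $\CP$), and on $\bm\beta$ and $\kappa$ (through $\GM$), as claimed.

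The only point needing care is that the weights $\bm\beta$ defining the graded meshes must satisfy~\eqref{eq:beta LB}. This guarantees $u\in\htto$, so that $\|u\|_{\htto}<\infty$ and the interpolation estimate~\eqref{eq:graded estimate} is applicable. We take this as implicit in the choice of graded mesh family. Otherwise the argument is a direct combination of the two earlier results, and I expect no real obstacle.
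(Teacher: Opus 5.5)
Your proposal is correct and follows essentially the same route as the paper: apply the quasi-optimality bound of Prop.~\ref{prop:Cea special} with $\WW=\VV_N$ and $w=I_hu$, use the Poincar\'e inequality to obtain $\|u-I_hu\|_{\Delta t}\le(\CP+\Delta t)^{\nicefrac12}\|\grad(u-I_hu)\|_{\L^2(\Omega)}$, and conclude with the graded-mesh interpolation estimate~\eqref{eq:graded estimate}. The only difference is that you write out the constant explicitly, which makes visible that it also depends on $f$ through $\sigma_f(\Delta t)$; the paper's statement leaves this dependence implicit in ``$\Delta t$''.
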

    \begin{proof}
        Using \eqref{eq:Cea special} for $\WW=\VV_N$, and employing \eqref{eq:Poincare L2}, we notice that
        \begin{equation*}
            \|u-\un\|_{\Delta t} \leq C(\Delta t)
            \|u-I_hu\|_{\Delta t}\leq C(\Delta t)(\CP+\Delta t)^{\nicefrac12}\|\grad(u-I_hu)\|_{\L^2(\Omega)}.
        \end{equation*}
        Now, the result follows from the interpolation bound \eqref{eq:graded estimate}.
    \end{proof}

\subsubsection{Gradient-dominating regime}\label{ssc:OCR grad}
    We now return to the case where $\rho > 0$ may be arbitrarily large. From the Poincar\'e inequality \eqref{eq:Poincare L2}, we know that the $\L^2$-norm decays at least as quickly as the gradient norm. Therefore, we will analyze the situation where the first variation \eqref{Pb:weak form} of the energy is primarily dominated by the latter. The following result establishes the desired rate \eqref{eq:OCR} in that case.
    \begin{proposition}[Optimal rate for dominating gradients]\label{prop:an faster}
        Suppose there exists $\varepsilon \in (0,1)$ such that for all sufficiently large $N\geq 0$ it holds
        \begin{equation}\label{eq:grad dominance}
            0 \leq  \an  \leq \varepsilon \bn^2,
        \end{equation}
        where $\un\in\VV_N$ solves~\eqref{eq:un}, and $u\in\VV$ is a solution of~\eqref{Pb:main PDE}.
        If $f_u$ is a Carathéodory function, cf. \eqref{eq:Caratheodory}, then 
        \begin{equation*}
            \bn \leq C\dim(\VV_N)^{-\nicefrac{1}{2}} \|u\|_{\htto},
        \end{equation*}
        for a constant $C>0$ depending only on $\Omega$, $\Delta t$, $f$, and on the triangulation parameters $\boldsymbol{\beta}$ and $\kappa$.
    \end{proposition}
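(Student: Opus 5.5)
Since $\la \E'(u)-\E'(\un),u-\un\ra = \bn^2 - \into (f(\x,u)-f(\x,\un))(u-\un)\dx$, the assumption \eqref{eq:grad dominance} gives the G\aa rding-type lower bound
\[
\dn \geq (1-\varepsilon)\bn^2 .
\]
This partial coercivity replaces monotonicity, and the plan is to imitate C\'ea's lemma with it. Galerkin orthogonality \eqref{eq:Galerkin orthogonality} on $\VV_N$, together with $\E'(u)=0$, gives $\la \E'(u)-\E'(\un),w\ra = 0$ for every $w\in\VV_N$. Taking $w=I_hu-\un\in\VV_N$, with $I_h$ the nodal interpolant on $\mesh_N$, we obtain
\[
\dn = \dni = \into \grad(u-\un)\cdot\grad(u-I_hu)\dx - \into (f(\x,u)-f(\x,\un))(u-I_hu)\dx .
\]
The first term is at most $\bn\,\bni$ by Cauchy--Schwarz.

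The second term, $\ani$, is the crux. Lemma~\ref{lemma: f Lipschitz} applies because $f_u$ is Carath\'eodory, and together with Poincar\'e \eqref{eq:Poincare L2} twice it gives
\[
\ani \le (\rho+\mu_f)\|u-\un\|_{\L^2(\Omega)}\|u-I_hu\|_{\L^2(\Omega)} \le (\rho+\mu_f)\CP\,\bn\,\bni .
\]
Combining with the lower bound yields
\[
(1-\varepsilon)\bn^2 \le \big(1+(\rho+\mu_f)\CP\big)\bn\,\bni .
\]
Dividing by $\bn$ (the case $\bn=0$ is trivial) gives the quasi-optimal bound $\bn\le \frac{1+(\rho+\mu_f)\CP}{1-\varepsilon}\bni$. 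The interpolation estimate \eqref{eq:graded estimate} then gives the claim, using $u\in\htto$ from Thm.~\ref{thm:beta} and $\dim(\VV_N)=\dim(\mathbb{S}^1(\Omega,\mesh_N))$. The constant depends on $\Omega$ through $\CP$ and $\GM$, on $f$ through $\rho+\mu_f$ and $\varepsilon$, and on $\boldsymbol\beta$ and $\kappa$; any $\Delta t$-dependence enters only through $\mu_f$ and the choice of $\rho$.

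The only delicate point is the bound on $\ani$. We cannot reuse \eqref{eq:grad dominance} for it, because the test function there is $u-\un$ rather than $u-I_hu$. Lemma~\ref{lemma: f Lipschitz} avoids this, and it is exactly where the continuity hypothesis on $f_u$ is needed. The bound holds for all sufficiently large $N$, which is where \eqref{eq:grad dominance} is assumed. The finitely many small $N$ are absorbed into $C$, since $\bn$ is uniformly bounded along the sequence by the weak convergence in Prop.~\ref{prop: u critical Galerkin}, $\dim(\VV_N)$ is bounded for those $N$, and $\|u\|_{\htto}>0$ by \eqref{eq:sol}.
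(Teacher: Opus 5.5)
Your proposal is correct and follows the paper's proof essentially line by line: the lower bound $(1-\varepsilon)\bn^2\le\la \E'(u)-\E'(\un),u-\un\ra$ from \eqref{eq:grad dominance}, Galerkin orthogonality to replace $\un$ by $I_hu$, Lemma~\ref{lemma: f Lipschitz} with Poincar\'e for the reaction term, and finally \eqref{eq:graded estimate}. Your extra step absorbing the finitely many small $N$ into $C$ is not in the paper, which implicitly states the bound only for large $N$; that step makes $C$ depend on the particular sequence and threshold, which conflicts with the stated dependence of $C$, so it is cleaner to restrict the estimate to sufficiently large $N$.
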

    \begin{proof}
        Thanks to the triangle inequality it holds
        \begin{align}\label{eq: decomp gradients}
            (1-\varepsilon)\bn^2 &\leq \bn^2 - \an \nonumber \\ 
            & \leq |\dn|= |\dni|,
        \end{align}
        where we have used~\eqref{eq:Galerkin orthogonality} to replace $\un\in \VV_N$ by $I_hu$ in the duality product. By means of Lem.~\ref{lemma: f Lipschitz} and the Poincar\'e inequality \eqref{eq:Poincare L2}, we deduce the bound
        \begin{equation*}
            \ani \leq \CP(\rho+\mu_f) \bn \bni.
        \end{equation*}
        Thus, by the triangle inequality and the Cauchy-Schwarz inequality
        \begin{equation*}
            (1-\varepsilon)\bn \leq [(1+\CP(\rho+\mu_f)] \bni.
        \end{equation*}
        Dividing by $1-\varepsilon$, for $0<\varepsilon<1$, and applying the estimate \eqref{eq:graded estimate} completes the proof.
    \end{proof}
    
    At this point, it is of particular importance to recall that, in the special case analyzed in \S\ref{ssc:OCR special case}, we were able to argue for the optimal rate of convergence without further constraints because the whole sequence $\{\un\}_{N\geq 0}$ exhibited strong convergence in $\VV$. Similarly, the rate established by Prop.~\ref{prop:an faster} relies on a specific property held by the \textit{entire} sequence $\{\un\}_{N\geq 0}$ within $\VV$. However, according to Thm.~\ref{thm: convergence in closed sbspc}, we may only afford weak convergence in $\VV$ in general, and it is even possible that the gradient norm of the error might not converge at all. This is why the subsequent corollary becomes crucial for the remainder of our work, as it ensures that the gradient of the error does decay to zero \textit{for a subsequence}. In other words, Prop.~\ref{prop:an faster} implies that a subsequence of $\{\un\}_{N\geq 0}$ converges strongly to a solution~$u\in\VV$ of~\eqref{Pb:main PDE}, even in the general case to be studied later on in~\S\ref{sc:OCR general case}.
    
    \begin{corollary}[Strong convergence]\label{cor:strong conv}
        If $f_u$ is a Carathéodory function, cf. \eqref{eq:Caratheodory}, then there exists a subsequence of $\{\un\}_{N\geq 0}$ that converges strongly in $\VV$ to a solution $u\in\VV$ of~\eqref{Pb:main PDE}.
    \end{corollary}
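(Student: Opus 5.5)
We work with the subsequence from \S\ref{sc: convergence}, again denoted $\{\un\}_{N\ge0}$, which converges to the solution $u$ weakly in $\VV$ and strongly in $\L^2(\Omega)$ (cf.~Prop.~\ref{prop: u critical Galerkin}). The plan is to show that along this subsequence the reaction term $\an$ is eventually dominated by the gradient term, so that condition \eqref{eq:grad dominance} of Prop.~\ref{prop:an faster} holds. Alternatively, and more robustly, we prove $\bn\to0$ directly along the same lines as in the proof of Prop.~\ref{prop:an faster}, without needing any rate.

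\textbf{Step 1: reaction term.} By Lem.~\ref{lemma: f Lipschitz}, which applies because $f_u$ is Carath\'eodory, we have
\[
\an\le(\rho+\mu_f)\|u-\un\|^2_{\L^2(\Omega)}\to0,
\]
by the strong $\L^2$ convergence.

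\textbf{Step 2: Galerkin orthogonality.} As in \eqref{eq: decomp gradients}, we write
\[
\bn^2=\la\E'(u)-\E'(\un),u-\un\ra+\into(f(\x,u)-f(\x,\un))(u-\un)\dx .
\]
Since $u$ is a critical point of $\E$ (Prop.~\ref{prop: u critical Galerkin}) and $\un$ satisfies \eqref{eq:Galerkin orthogonality} on $\VV_N$, the first term equals $\la\E'(u)-\E'(\un),u-v_N\ra$ for any $v_N\in\VV_N$. We choose $v_N\to u$ strongly in $\VV$; this is possible by Asm.~\ref{ass:Galerkin density}, or concretely $v_N=I_hu$ with \eqref{eq:graded estimate}, since $u\in\htto$ by Thm.~\ref{thm:beta}. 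By Lem.~\ref{lemma: E' Lipschitz},
\[
|\la\E'(u)-\E'(\un),u-v_N\ra|\le L_{\E'}(\Delta t)\,\|u-\un\|_{\Delta t}\,\|u-v_N\|_{\Delta t}.
\]
Here $\|u-\un\|_{\Delta t}$ is bounded, because weakly convergent sequences are bounded, while $\|u-v_N\|_{\Delta t}\to0$. Hence the first term tends to zero.

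\textbf{Conclusion.} Combining the two steps gives $\bn\to0$ along the subsequence, which is the claimed strong convergence in $\VV$. It also follows that \eqref{eq:grad dominance} can be arranged on a further subsequence where needed. The only delicate point is the boundedness of $\|u-\un\|_{\Delta t}$. We take it from the weak convergence established in \S\ref{sc: convergence}, which itself rests on the energy decrease and the weak coercivity in Asm.~\ref{Ass:convergence}. We must also make sure the subsequence extracted there is the one we use, and that the limit $u$ is the same function.
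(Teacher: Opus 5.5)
Your argument is correct, but it takes a different route from the paper.

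\textbf{The paper's route.} The paper argues by contradiction. Assuming no subsequence converges strongly, it passes to a subsequence with $\bn\to c>0$. Since $\an\to0$ by Lem.~\ref{lemma: f Lipschitz}, the gradient-dominance condition \eqref{eq:grad dominance} then holds for any $\varepsilon\in(0,1)$. Prop.~\ref{prop:an faster} then gives $\bn\lesssim\dim(\VV_N)^{-1/2}\to0$, which is a contradiction. That proposition uses $u\in\htto$ from Thm.~\ref{thm:beta} and the graded-mesh estimate \eqref{eq:graded estimate}.

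\textbf{Your route.} You estimate $\bn^2$ directly. Galerkin orthogonality lets you replace $u-\un$ by $u-v_N$ in the duality term. Lem.~\ref{lemma: E' Lipschitz} and the boundedness of the weakly convergent sequence then make that term vanish. The reaction term vanishes by strong $\L^2$ convergence.

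\textbf{What each approach buys.} Your approach shows that the subsequence from \S\ref{sc: convergence} itself converges strongly, with no further extraction and no contradiction. It needs only the density in Asm.~\ref{ass:Galerkin density}, not the regularity of $u$ or any rate of approximation. In fact even continuity of $f_u$ is dispensable for your Step~1: the bound $|f(\x,s)-f(\x,t)|\le(\sigma_f(\Delta t)+1/\Delta t)|s-t|$ already follows from \eqref{eq: g Lipschitz}. The paper's route ties the corollary to Prop.~\ref{prop:an faster} and the gradient-dominance framework it reuses later. For the corollary itself, however, it yields nothing more than your argument does.

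\textbf{One correction.} In your concluding paragraph, the claim that \eqref{eq:grad dominance} ``can be arranged on a further subsequence'' does not follow. Knowing $\an\to0$ and $\bn\to0$ says nothing about the ratio $a_N/b_N$. Controlling that ratio is exactly why the paper later distinguishes the cases $\limab<0$, $\limab=0$ and $\limab>0$ in \S\ref{sc:OCR general case}. Under the paper's contradiction hypothesis $\bn\to c>0$ domination is automatic, but once you know $\bn\to0$ it is not. The remark is not needed for the corollary, so simply drop it.
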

    \begin{proof}
        Following the same line of arguments as in the proof of Thm.~\ref{thm: convergence in closed sbspc}, there is a subsequence $\{\un\}_{N\geq 0}$ that converges strongly in $\L^2(\Omega)$ and weakly in $\VV$ to a function $u\in\VV$, which is a solution of~\eqref{Pb:main PDE} thanks to Prop.~\ref{prop: u critical Galerkin}. Suppose now that there is no subsequence of $\{\un\}_{N\geq 0}$ that converges strongly to $u$ in $\VV$. Then, up to extracting a subsequence, it holds that 
        \begin{equation*}
            \lim\limits_{N\rightarrow \infty}\bn = c,
        \end{equation*}
        for some finite $c>0$, as the sequence remains bounded thanks to its weak convergence in $\VV$. In addition, owing to Lem.~\ref{lemma: f Lipschitz}, we have that
        \begin{equation*}
            0\leq \an \leq (\rho+\mu_f) \|u-\un\|_{\L^2(\Omega)}^2\rightarrow 0
        \end{equation*}
        as $N\rightarrow \infty$. As a result, for any $\varepsilon\in (0,1)$, we can find an integer $N_{\varepsilon,c}\in \N$ such that for all $N \geq N_{\varepsilon,c}$ we have
        \begin{equation*}
            0 \leq \an \leq \varepsilon\bn^2.
        \end{equation*}
        But since $u\in \htto$, Prop.~\ref{prop:an faster} then implies that
        $\bn \to 0$ as $N\rightarrow \infty$, which yields a contradiction.
    \end{proof}

\subsubsection{Asymptotic monotonicity}\label{ssc:OCR monotone}
    Whilst in the special case we could rely on the quasi-optimality bound \eqref{eq:Cea special} to demonstrate the optimal convergence rate, this approach cannot be pursued if $r(\Delta t) \ge 1$ in the bound \eqref{eq:Cea contraction}, since this would subvert the contraction behavior of the form $\ell_{\Delta t}$, and, in turn, the bound~\eqref{eq:cea constant} would no longer hold. Nonetheless, we can recover the result \textit{for a subsequence} when $f$ asymptotically exhibits a monotone behavior along the sequence $\{\un\}_{N\geq 0}$, in the sense that
    \begin{equation}\label{eq: f monotonic}
        \into (f(\x,u)-f(\x,\un))(u-\un) \dx < 0 \qquad \forall N\geq 0 \text{ large enough}.
    \end{equation}
    
    \begin{proposition}[Quasi-optimality under asymptotic monotonicity]\label{prop: Cea general}
        Assume that $f$ is asymptotically monotone in the sense of \eqref{eq: f monotonic}, and that $f_u$ is a Carathéodory function, cf. \eqref{eq:Caratheodory}. Then, there exists a subsequence of $\{\un\}_{N\geq 0}$ for which the quasi-optimality bound \eqref{eq:Cea special} holds.
    \end{proposition}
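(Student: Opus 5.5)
Under \eqref{eq: f monotonic}, the reaction part of $\la \E'(u)-\E'(\un),u-\un\ra$ has the favourable sign. So I will not use the contraction of $\ell_{\Delta t}$; I will work directly with the first variation, essentially as in Prop.~\ref{prop:an faster}. First, by Cor.~\ref{cor:strong conv}, I extract a subsequence of $\{\un\}_{N\ge0}$ that converges strongly in $\VV$ and in $\L^2(\Omega)$ to the solution $u$, and along which \eqref{eq: f monotonic} holds for all large $N$. All further estimates are done along this subsequence.

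The core step is a coercivity-type bound. Since
\[
\la \E'(u)-\E'(\un),u-\un\ra=\bn^2-\into (f(\x,u)-f(\x,\un))(u-\un)\dx,
\]
\eqref{eq: f monotonic} gives
\[
\bn^2\le \la \E'(u)-\E'(\un),u-\un\ra .
\]
Both $\E'(u)$ and $\E'(\un)$ vanish on $\VV_N$, by \eqref{Pb:weak form} and \eqref{eq:Galerkin orthogonality}. Hence I may replace $u-\un$ in the second slot by $u-w$ for an arbitrary $w\in\VV_N$, since $w-\un\in\VV_N$. The Lipschitz bound of Lem.~\ref{lemma: f Lipschitz} (which applies because $f_u$ is Carathéodory) together with the Poincar\'e inequality \eqref{eq:Poincare L2} then gives
\[
\bn^2\le \bigl(1+\CP(\rho+\mu_f)\bigr)\bn\,\|\grad(u-w)\|_{\L^2(\Omega)} .
\]
After dividing by $\bn$, I obtain quasi-optimality in the gradient seminorm, with an infimum over $w\in\VV_N$.

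The remaining step converts this into \eqref{eq:Cea special} in the $\|\cdot\|_{\Delta t}$ norm. Norm equivalence gives
\[
\|v\|_{\Delta t}^2\le(\CP+\Delta t)\|\grad v\|^2_{\L^2(\Omega)},\qquad \Delta t\|\grad v\|_{\L^2(\Omega)}^2\le\|v\|^2_{\Delta t}.
\]
The resulting constant is $C=\bigl(1+\CP(\rho+\mu_f)\bigr)\bigl((\CP+\Delta t)/\Delta t\bigr)^{1/2}$, which is independent of $N$.

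The main obstacle I expect is the choice of subsequence. Hypothesis \eqref{eq: f monotonic} is stated along the whole sequence, but the limit $u$ is only identified along a subsequence (Prop.~\ref{prop: u critical Galerkin}). So I must make sure that \eqref{eq: f monotonic} refers to the same $u$ that is the weak and $\L^2$ limit of the chosen subsequence; this is why the statement only claims a subsequence. The strong convergence from Cor.~\ref{cor:strong conv} is not needed for the estimate itself, but it makes the limit unambiguous. The division by $\bn$ is harmless, because \eqref{eq:sol} guarantees $\bn>0$.
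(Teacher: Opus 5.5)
Your argument is correct, and it reaches \eqref{eq:Cea special} by a different route from the paper's.

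The paper repeats the C\'ea-type argument of Prop.~\ref{prop:Cea special} in the $\B_{\Delta t}$-inner product:
\begin{itemize}
\item The off-diagonal term is bounded using the Lipschitz constant $\sigma_f(\Delta t)$ of $g_{\Delta t}$ from \eqref{eq: g Lipschitz}. This produces the factor $1+r(\Delta t)$, which no longer needs to be smaller than $1$.
\item The diagonal term $\ell_{\Delta t}(u;u-\un)-\ell_{\Delta t}(\un;u-\un)$ was controlled by $r(\Delta t)\|u-\un\|_{\Delta t}^2$ in the contractive case. Here the sign in \eqref{eq: f monotonic} bounds it by $\|u-\un\|_{\L^2(\Omega)}^2\le\theta(\Delta t)\|u-\un\|_{\Delta t}^2$, and it is absorbed because $\theta(\Delta t)<1$.
\end{itemize}
This gives the constant $(1+r(\Delta t))/(1-\theta(\Delta t))$ directly in $\|\cdot\|_{\Delta t}$.

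You instead run the first-variation argument of Prop.~\ref{prop:an faster} in the gradient seminorm. There the negative sign of $a_N$ from \eqref{eq:def an bn} replaces the smallness condition $|a_N|\le\varepsilon b_N$; in effect you take $\varepsilon=0$. You convert norms only at the end, which gives $(1+\CP(\rho+\mu_f))\bigl((\CP+\Delta t)/\Delta t\bigr)^{1/2}$. Your use of Lem.~\ref{lemma: f Lipschitz} is covered by the Carath\'eodory hypothesis.

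What each route buys:
\begin{itemize}
\item Your version shows that the monotone regime is just a degenerate instance of gradient dominance. It also yields quasi-optimality in the energy seminorm with a constant independent of $\Delta t$.
\item The paper's version stays in the norm of the iteration. It presents the monotone case as the analogue of the contraction regime, with $\theta(\Delta t)$ playing the role of $r(\Delta t)$ in the absorption step.
\end{itemize}

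In both proofs, the strong convergence from Cor.~\ref{cor:strong conv} only fixes the subsequence and is not used in the estimate itself, as you correctly note.
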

    \begin{proof}
        By Cor.~\ref{cor:strong conv}, we have strong convergence of $\{\un\}_{N\geq 0}$ in $\VV$, up to extracting a subsequence. Pick $N\geq 0$ so that \eqref{eq: f monotonic} is fulfilled, and let $w\in \VV_N$ be fixed. Following along the lines of the proof of Prop.~\ref{prop:Cea special} with $\WW=\VV_N$, we have
        \begin{align*}
            \|u-\un\|^2_{\Delta t}&\leq(1+r(\Delta t))\|u-\un\|_{\Delta t}\|u-w\|_{\Delta t}+ \ell_{\Delta t}(u;u-\un)-\ell_{\Delta t}(\un;u-\un),
        \end{align*}
        with $r(\Delta t)$ from~\eqref{eq:rt}, and $\ell_{\Delta t}$ from \eqref{eq:l form}. Now, because of the monotonicity \eqref{eq: f monotonic} and the Poincar\'e inequality \eqref{eq:Poincare lambda}, we note that
        \begin{align*}
            \ell_{\Delta t}(u;u-\un)-\ell_{\Delta t}(\un;u-\un) 
            &< \|u-\un\|_{\L^2(\Omega)}^2 \leq \theta(\Delta t) \|u-\un\|^2_{\Delta t},
        \end{align*}
        with $0<\theta(\Delta t)<1$, cf. \eqref{def:beta(lambda)}. Thus,
        \begin{equation*}
           \|u-\un\|_{\Delta t} \leq \frac{1+r(\Delta t)}{1-\theta(\Delta t)} \|u-w\|_{\Delta t}.
        \end{equation*}
        Exploiting that $w \in \VV_N$ is arbitrary, we deduce \eqref{eq:Cea special}.
    \end{proof}

    \begin{remark}\label{rmk:OCR ell<0}
        The optimal convergence rate is a direct consequence of the arguments given in \S\ref{ssc:OCR special case}. Indeed, in the monotone case, combining the quasi-optimality bound \eqref{eq:Cea special} and the interpolation estimate \eqref{eq:graded estimate} yields~\eqref{eq:OCR} for a subsequence of $\{\un\}_{N\geq 0}$. The constant in \eqref{eq:OCR} depends only on $\Omega$, $\Delta t$, and the triangulation parameters $\boldsymbol{\beta}$ and $\kappa$.
    \end{remark}

\subsection{The general case}\label{sc:OCR general case}
    We now proceed to derive \eqref{eq:OCR} for a subsequence in the general setting. 
    To this end, we introduce two sequences, namely $\{a_N\}_{N\geq 0}$, $\{b_N\}_{N\geq 0} \subset \R$, defined by
        \begin{equation}\label{eq:def an bn}
            a_N := \into (f(\x,u)-f(\x,\un))(u-\un) \dx \qquad \text{and} \qquad b_N := \bn^2.
        \end{equation}
        We notice from the Lipschitz continuity \eqref{eq: f Lipschitz} of $f$ and from the Poincar\'e inequality \eqref{eq:Poincare L2} that 
        $
            |a_N| \leq \CP(\rho+\mu_f)b_N.
        $
        Furthermore, under condition~\eqref{eq:sol}, we have $b_N > 0$ for all~$N$, 
        and hence
        \[
            0 \leq \left|\nicefrac{a_N}{b_N}\right| \leq \CP(\rho+\mu_f)\qquad\forall N\ge 0.
        \]
        Moreover, again by Cor.~\ref{cor:strong conv}, we have strong convergence of the sequence $\{\un\}_{N\geq 0}$ in~$\VV$, up to extracting a subsequence.
        Therefore, there is a (possibly further) subsequence of subspaces $\{\VV_{N'}\}_{N'}$ such that $\nicefrac{a_{N'}}{b_{N'}}\rightarrow \varsigma$ as $N'\rightarrow \infty$, for some limit $\limab \in \R$. Indeed, we may assume without loss of generality that the full sequence is convergent and that
        \begin{equation}\label{eq:def ell}
            \limab:= \lim\limits_{N\rightarrow \infty}\frac{a_N}{b_N}
        \end{equation}
        exists and is finite.
    
    \begin{proposition}[The case $\limab\le 0$]
    Suppose $f_u$ is a Carathéodory function, cf. \eqref{eq:Caratheodory}, and that $\limab\le 0$ in~\eqref{eq:def ell}. Then, up to extracting a subsequence, it holds
        \begin{equation}\label{eq:OCR general negative}
            \|u-u_N^\star\|_{\Delta t} \leq C\dim(\VV_N)^{-\nicefrac{1}{2}}\|u\|_{\htto},
        \end{equation}
        for a constant $C>0$ depending only on $\Omega$, $\Delta t$, $f$, and on the triangulation parameters $\boldsymbol{\beta}$ and $\kappa$.
    \end{proposition}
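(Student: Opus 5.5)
The proof splits into the two sub-cases $\limab<0$ and $\limab=0$, both reduced to results already established in \S\ref{ssc:OCR grad}--\S\ref{ssc:OCR monotone}. Throughout we work along the subsequence fixed in \eqref{eq:def ell}. Along it, Cor.~\ref{cor:strong conv} gives $\un\to u$ strongly in $\VV$, and $b_N>0$ by \eqref{eq:sol}.

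\emph{Case $\limab<0$.} Since $a_N/b_N\to\limab<0$ and $b_N>0$, we get $a_N<0$ for all sufficiently large $N$. This is exactly the asymptotic monotonicity \eqref{eq: f monotonic}. Prop.~\ref{prop: Cea general} then yields the quasi-optimality bound \eqref{eq:Cea special} on $\WW=\VV_N$, with constant $\nicefrac{(1+r(\Delta t))}{(1-\theta(\Delta t))}$. We choose $w=I_hu$ and argue as in Cor.~\ref{Cor: OCR Cea special}. Norm equivalence, $\|v\|_{\Delta t}\le(\CP+\Delta t)^{\nicefrac12}\|\grad v\|_{\L^2(\Omega)}$, combined with the graded-mesh interpolation bound \eqref{eq:graded estimate} gives \eqref{eq:OCR general negative}. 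This is the argument of Rmk.~\ref{rmk:OCR ell<0}.

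\emph{Case $\limab=0$.} Here $|a_N|/b_N\to 0$, so for any fixed $\varepsilon\in(0,1)$, say $\varepsilon=\nicefrac12$, we have $|a_N|\le\varepsilon b_N$ for all large $N$. This is condition \eqref{eq:grad dominance}. Prop.~\ref{prop:an faster} only ever uses $|a_N|$, through $\bn^2-|a_N|\le|\dn|$, so the lower bound $0\le|a_N|$ there is automatic. The proposition then gives $\bn\le C\dim(\VV_N)^{-\nicefrac12}\|u\|_{\htto}$. Converting to the $\Delta t$-norm via \eqref{eq:Poincare L2}, i.e.\ $\|u-\un\|_{\Delta t}\le(\CP+\Delta t)^{\nicefrac12}\bn$, gives \eqref{eq:OCR general negative}.

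The sign $a_N<0$ in the first case needs the strict inequality $\limab<0$, and the second case covers $\limab=0$ regardless of the sign of $a_N$. In fact the second case covers every $\limab\in(-1,1)$, so the two cases overlap and the dichotomy is exhaustive.

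The only point needing care is bookkeeping. Both Prop.~\ref{prop: Cea general} and Prop.~\ref{prop:an faster} are stated for a sequence along which strong convergence and the relevant inequality hold for all large $N$. We check that the subsequence chosen for \eqref{eq:def ell}, which is already a subsequence of the strongly convergent one from Cor.~\ref{cor:strong conv}, is admissible. No further extraction is then needed, and the constant depends only on $\Omega$, $\Delta t$, $f$ (through $\rho+\mu_f$ and $\varepsilon$), $\bm\beta$ and $\kappa$.
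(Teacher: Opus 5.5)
Your proposal is correct and follows the paper's proof. The case $\limab=0$ is reduced to the gradient-dominance estimate of Prop.~\ref{prop:an faster}, and the case $\limab<0$ goes through the asymptotic monotonicity \eqref{eq: f monotonic}, Prop.~\ref{prop: Cea general} and Rmk.~\ref{rmk:OCR ell<0}; your extra observations are accurate but not needed, namely that the gradient-dominance argument already covers all $\limab\in(-1,1)$ and that no extraction beyond the subsequence defining $\limab$ is required.
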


    \begin{proof}        
        In the case $\limab=0$, the gradient sequence $\{b_N\}_{N\geq 0}$ is dominant. More precisely, by definition of the limit \eqref{eq:def ell}, for any fixed $0<\varepsilon<1$ and $N\geq 0$ large enough, we have
        $
            \left|\nicefrac{a_N}{b_N}\right|<\varepsilon,
        $
        which corresponds exactly to~\eqref{eq:grad dominance}, and the rate \eqref{eq:OCR general negative} results immediately from Prop.~\ref{prop:an faster}.
        
        If $\limab<0$, then $a_N<0$ for every $N\geq 0$ large enough, since $b_N>0$. In particular, \eqref{eq: f monotonic} is satisfied, and we fall into the monotone case treated in \S\ref{ssc:OCR monotone}. As a result, the quasi-optimality bound \eqref{eq:Cea special} holds for a further subsequence, thanks to Prop.~\ref{prop: Cea general}, and \eqref{eq:OCR general negative} follows from Rmk.~\ref{rmk:OCR ell<0}.
    \end{proof}
      
    The main part of the error analysis in the general case consists in demonstrating by contradiction that the remaining case $\limab>0$ cannot happen. To this end, we will use the Aubin-Nitsche technique for semilinear PDEs proposed in \cite{Hardering2017}.
                
\subsubsection{Aubin-Nitsche duality}\label{ssc:Aubin-Nitsche}

For a fixed solution $u\in\VV$ of~\eqref{Pb:main PDE}, i.e., a critical point of the energy function $\E$, and a numerical approximation $\un\in\VV_N$, cf.~\eqref{eq:weak* convergence}, we consider the \emph{linear} dual problem
        \begin{equation}\label{eq:dual pb}
        \phi\in\VV:\qquad    -\Delta\phi-f_u(\cdot,u)\phi = u-\un\quad\text{in }\VV'.
        \end{equation}
In the special case $\rho\leq\nicefrac{1}{\CP}$, with $\CP$ from~\eqref{eq:Poincare L2}, the existence of a (unique) solution $\phi\in \VV$ to \eqref{eq:dual pb} is ensured by the Lax-Milgram theorem. Invertibility of the duality operator given by
\begin{equation}\label{eq:PP}
\PP:=-\Delta -f_u(\cdot,u)\,\Id:\,\VV\to\VV'\,,
\end{equation}
however, may fail in general. In this situation, applying~\eqref{eq:f' bounds}, we observe that the operator $\PP+\rho\,\Id$ is uniformly elliptic, and therefore, writing
$
    \PP=[\PP+\rho\,\Id]-\rho\,\Id,
$
we deduce that $\PP$
satisfies Fredholm's alternative, i.e., either $\KK:=\ker(\PP)=\{0\}$, in which case~\eqref{eq:dual pb} is uniquely solvable, or $\KK$ is finite-dimensional, and thus closed; see, e.g., \cite[Thm.~9.21]{RenardyRogers}. 
To shed some light on the latter case, let us consider the graph space $\DD:=\{v\in\VV:\,\Delta v\in\spcL^2(\Omega)\}$, equipped with the norm $\|v\|_{\DD}:=\|v\|_{\spcL^2(\Omega)}+\|\Delta v\|_{\spcL^2(\Omega)}$, which is a Banach subspace of $\VV$ by the closedness of the Laplacian. By the boundedness of $f_u$, cf.~\eqref{eq:f' bounds}, it holds that 
        \[
        \|\PP(v)\|_{\L^2(\Omega)}
        =\|-\Delta v+f_u(\cdot,u)v\|_{\L^2(\Omega)}
        \le \max(1,\rho+\mu_f)\|v\|_{\DD}\qquad\forall v\in\DD,
        \]
i.e., $\PP|_{\DD}:\,\DD\to\L^2(\Omega)$ is a bounded linear operator. Furthermore, since $\KK$ is closed, we can apply the $\L^2(\Omega)$-orthogonal decomposition
\begin{equation}\label{eq: L2 orthogonal decomposition}
    \L^2(\Omega)=\KK\oplus \KK^\perp,
\end{equation}
which permits to express the range as $\PP(\DD)=\KK^\perp$.
In summary, we conclude that the linear mapping
\begin{equation}\label{eq:PP1}
\PP|_{\DD^\perp_u}:\,\DD^\perp_u\to\KK^\perp
\end{equation}
is bijective on the closed linear subspace $\DD^\perp_u := \DD \cap \KK^\perp$. Accordingly, in the sequel, we will work with the assumption that
        \begin{equation}\label{eq:Fredholm}
        u-\un\in\KK^\perp,
        \end{equation}
which implies the existence of a unique solution $\phi\in\DD^\perp_u$ of the dual problem~\eqref{eq:dual pb}; clearly, if $\KK=\{0\}$, then $\KK^\perp=\L^2(\Omega)$, and the above condition~\eqref{eq:Fredholm} is redundant.

\begin{lemma}[Dual problem]\label{lemma:dual pb}
        Let $u\in \VV\cap \htto$ be a critical point of $\E$, and denote by $\{\un\}_{N\geq 0}$ the subsequence in \eqref{eq:weak* convergence} converging weakly in $\VV$ and strongly in $\L^2(\Omega)$ to $u$. Suppose that condition \eqref{eq:Fredholm} is satisfied for an index $N\geq 0$ associated to the subsequence. In addition, if
        Asm.~\ref{Ass: f}, Asm.~\ref{Ass:convergence} and Asm.~\ref{ass:Galerkin density} hold, then the dual problem~\eqref{eq:dual pb} possesses a solution $\phi\in\htto$ that satisfies the stability bound
        \begin{equation}\label{Elliptic regularity}
            \|\phi\|_{\htto} \leq \ER \|u-\un\|_{\L^2(\Omega)},
        \end{equation}
        for a constant $\ER>0$ independent of $\dim(\VV_N)$.
    \end{lemma}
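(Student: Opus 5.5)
The plan has three parts: existence of $\phi$ from the Fredholm discussion above, an $\L^2$ stability bound for $\phi$ from bounded invertibility of the restricted operator, and weighted regularity by treating the dual problem as a Poisson problem with $\L^2$ data.

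\emph{Existence.} By \eqref{eq:Fredholm}, the datum $u-\un$ lies in $\KK^\perp$. Since $\PP|_{\DD^\perp_u}:\DD^\perp_u\to\KK^\perp$ is bijective by \eqref{eq:PP1}, there is a unique $\phi\in\DD^\perp_u\subset\VV$ solving \eqref{eq:dual pb}.

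\emph{Stability in $\DD$.} The space $\DD^\perp_u$ is closed in $\DD$, because it is the intersection of $\DD$ with the $\L^2$-closed subspace $\KK^\perp$ and the $\DD$-norm dominates the $\L^2$-norm. The restriction is bounded by the estimate shown before \eqref{eq: L2 orthogonal decomposition}. Hence the bounded inverse theorem yields a constant $C_u$, depending only on $u$ and $f$ through $f_u(\cdot,u)$, such that
\[
\|\phi\|_{\DD}\le C_u\|u-\un\|_{\L^2(\Omega)}.
\]
The key observation is that $\PP$ depends only on the fixed limit $u$ and not on $N$, so $C_u$ is independent of $\dim(\VV_N)$. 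This is what makes $\ER$ uniform. With this bound, $\|\phi\|_{\L^2(\Omega)}$ and $\|\Delta\phi\|_{\L^2(\Omega)}$ are controlled by $\|u-\un\|_{\L^2(\Omega)}$. Testing $-\Delta\phi=f_u(\cdot,u)\phi+u-\un$ with $\phi$ then controls $\|\grad\phi\|_{\L^2(\Omega)}$ as well, using \eqref{eq:f' bounds} and Cauchy--Schwarz.

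\emph{Weighted regularity.} I rewrite the dual problem as the Poisson problem
\[
-\Delta\phi=\psi:=f_u(\cdot,u)\phi+(u-\un),
\]
with homogeneous Dirichlet data. Since $f_u(\cdot,u)$ is bounded by $\rho+\mu_f$,
\[
\|\psi\|_{\L^2(\Omega)}\le\bigl((\rho+\mu_f)C_u+1\bigr)\|u-\un\|_{\L^2(\Omega)}.
\]
Moreover $\L^2(\Omega)\subset\spcL_{\bm{\mathsf{\beta}}}(\Omega)$ continuously, because $\Pb$ is bounded on $\overline\Omega$. I then invoke the linear regularity theory of Babu\v{s}ka et al.\ for the Laplacian in weighted spaces, which applies under \eqref{eq:beta LB}, as in the proof of Thm.~\ref{thm:beta} and \cite[Thm.~2.6]{SpicherWihler2025}. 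This gives the a priori estimate
\[
\|\phi\|_{\htto}\le C\bigl(\|\psi\|_{\spcL_{\bm{\mathsf{\beta}}}(\Omega)}+\|\phi\|_{\spcH^1(\Omega)}\bigr).
\]
Combining the three estimates gives \eqref{Elliptic regularity} with $\ER$ depending on $\Omega$, $\bm{\mathsf{\beta}}$, $f$, $\rho+\mu_f$ and $u$, but not on $N$.

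\emph{Main obstacle.} The delicate point is the uniform stability constant. The Fredholm setting gives only bijectivity, so boundedness of the inverse must come from the open mapping theorem applied to the closed subspace $\DD^\perp_u$. One must also make sure that $\phi$ is taken as the particular solution in $\KK^\perp$ rather than an arbitrary one: adding kernel elements would destroy the bound. The linear weighted regularity step is expected to be routine given the cited results.
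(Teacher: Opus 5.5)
Your proposal is correct and follows essentially the same route as the paper: the bounded inverse theorem for $\PP|_{\DD^\perp_u}:\DD^\perp_u\to\KK^\perp$ gives an $N$-independent $\L^2$ bound on $\phi=\PP^{-1}(u-\un)$, and weighted regularity of $(-\Delta)^{-1}$ is then applied to the $\L^2$ datum $f_u(\cdot,u)\phi+u-\un$. Your extra step of bounding $\|\grad\phi\|_{\L^2(\Omega)}$ by testing is harmless but unnecessary, since the paper uses the estimate $\|(-\Delta)^{-1}w\|_{\htto}\le C_\Delta\|w\|_{\L^2(\Omega)}$ directly.
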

    
    \begin{proof}
    From the analysis above, we recall that the linear operator $\PP$ from~\eqref{eq:PP} is bounded and invertible for the spaces $\DD^\perp_u\to\KK^\perp$, cf.~\eqref{eq:PP1}. Therefore,
        by the bounded inverse theorem, we deduce that its inverse $\PP^{-1}$ is bounded as well, i.e., there is a constant $C_{\ref{eq:ellstab}}>0$ such that
        \begin{equation}\label{eq:ellstab}
        \|\PP^{-1}(w)\|_{\DD}\le C_{\ref{eq:ellstab}}\|w\|_{\L^2(\Omega)}\qquad\forall w\in\KK^\perp.
        \end{equation}
        Especially, using~\eqref{eq:Fredholm}, the function $\phi:=\PP^{-1}(u-\un)\in\DD^\perp_u\subset\VV$ solves~\eqref{eq:dual pb}, and it holds the bound
        \begin{equation}\label{eq:ellbd}
        \|\phi\|_{\L^2(\Omega)}\le C_{\ref{eq:ellstab}}\|u-\un\|_{\L^2(\Omega)}.
        \end{equation}
        Moreover,
        by elliptic regularity in weighted Sobolev spaces, see~\cite{Babuska1979}, we have the continuous inclusion $(-\Delta)^{-1}\left(\spcL_{\bm{\mathsf{\beta}}}(\Omega)\right) \subset \htto$; therefore, noticing that $\L^2(\Omega)\subset \L_{\bm\beta}(\Omega)$, there is a constant $C_\Delta>0$ such that
        \begin{equation*}
        \|(-\Delta)^{-1}w\|_{\htto}\le C_{\Delta}\|w\|_{\L^2(\Omega)}\qquad\forall w\in\L^2(\Omega).
        \end{equation*}
        Hence, 
        \[
        \|\phi\|_{\htto}
        =\|(-\Delta)^{-1}[f_u(\cdot,u)\phi+u-\un]\|_{\htto}
        \le C_\Delta \|f_u(\cdot,u)\phi+u-\un\|_{\L^2(\Omega)}.
        \]
        Exploiting the boundedness of~$f_u(\cdot, u)$, and invoking~\eqref{eq:ellbd}, gives~\eqref{Elliptic regularity}. 
    \end{proof}

\begin{remark}\label{rmk:aux pb existence}
Letting $\Tilde{f}(\cdot,\varphi):= f_u(\cdot,u)\varphi + u-\un$, we see that the energy $\E^d$ associated with the dual problem~\eqref{eq:dual pb} takes a similar structure as the primal energy~$\E$ from~\eqref{eq:energy func}, when $f$ is replaced by $\Tilde{f}$, viz.
\begin{equation*}
\E^d(\varphi)
=\frac12\into|\grad\varphi|^2\,\dx-\frac12\into f_u(\x,u)\varphi^2\,\dx-\into(u-\un)\varphi\,\dx\,;
\end{equation*}
incidentally, $\Tilde{f}$ satisfies Asm.~\ref{Ass: f} with the same bounds \eqref{eq:f' bounds}. Therefore, in light of Prop.~\ref{prop: u critical Galerkin}, the existence of a dual solution can be ensured whenever $\E^d$ is weakly coercive. To this end, for a solution $u\in\VV$ of~\eqref{Pb:main PDE} and any $\varphi\in\VV$, recalling~\eqref{eq:energy func}, we observe that
\begin{align*}
\E^d(\varphi)
&=\E(u+\varphi)-\E(u)-\into(u-\un)\varphi\,\dx\\
&\quad+\into\left(F(\x,u+\varphi)-F(\x,u)-f(\x,u)\varphi-\frac12f_u(\x,u)\varphi^2\right)\dx
.
\end{align*}
Therefore, since $\E$ is weakly coercive by Asm.~\ref{Ass:convergence}~(ii), we obtain $\E^d(\varphi)\to+\infty$ as $\|\varphi\|_{\VV}\to\infty$ provided that the last integral grows faster than $\|\varphi\|_{\L^{p}(\Omega)}$, for some fixed $p\in(1,\infty)$, thereby absorbing the integral involving the error $u-\un$ by application of H\"older's inequality and the Sobolev embedding $\spcH^1(\Omega)\hookrightarrow\L^{\nicefrac{p}{(p-1)}}(\Omega)$ in two-space dimensions, viz. 
\[
\left|\into(u-\un)\varphi\,\dx\right|
\le \|u-\un\|_{\L^{\nicefrac{p}{(p-1)}}(\Omega)}\|\varphi\|_{\L^p(\Omega)}
\lesssim \|u-\un\|_{\VV}\|\varphi\|_{\L^p(\Omega)}.
\]
In order to formulate a sufficient condition, for \emph{any} $\delta>0$, suppose that there is $R_\delta>0$ such that
\[
\into\left(F(\x,u+\varphi)-F(\x,u)-f(\x,u)\varphi-\frac12f_u(\x,u)\varphi^2\right)\dx
\ge \delta\|\varphi\|_{\L^{p}(\Omega)},
\]
whenever $\|\varphi\|_{\VV}\ge R_\delta$. Using a Taylor expansion, under appropriate differentiability, this can be achieved if
\[
\into f_{uu}(\x,\varphi)\varphi^3 \, \dx\ge \delta\|\varphi\|_{\L^{p}(\Omega)},
\]
for any $\delta>0$ and $\|\varphi\|_{\VV}\ge R_\delta$.
\end{remark}

        \subsubsection{Error analysis for $\limab>0$}
        
        The subsequence analysis requires the following technical result.

    \begin{lemma}\label{Lemma: int f' conv}
        For $\x\in \Omega$, consider the path $\Gamma_t(\x):= tu(\x)+(1-t)\un(\x)$ with $t\in [0,1]$, where $u\in\VV$ and $\un\in\VV_{N}$ are solutions of~\eqref{Pb:main PDE} and~\eqref{eq:un}, respectively, with $\un\rightharpoonup u$ in $\VV$ and $\un\to u$ in $\L^2(\Omega)$ as $N\to\infty$ (possibly up to a subsequence), cf.~Thm.~\ref{thm: convergence in closed sbspc}. If $f_u$ is a Carathéodory function, cf. \eqref{eq:Caratheodory}, then, up to extracting a (further) subsequence,
        \begin{equation}\label{eq: int f' conv}
            \lim_{N\rightarrow \infty}\into |f_u(\x,\Gamma_t)-f_u(\x,u)|^2\,\dx =0,
        \end{equation}
        uniformly with respect to $t\in [0,1]$.
    \end{lemma}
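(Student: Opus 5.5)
The plan is to combine pointwise a.e. convergence with dominated convergence, and to obtain uniformity in $t$ from a single dominating envelope that does not depend on $t$.

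First, since $\un\to u$ in $\L^2(\Omega)$, we extract a further subsequence (not relabelled) with $\un(\x)\to u(\x)$ for a.e.\ $\x\in\Omega$. We choose it so that, in addition, $\|u-\un\|_{\L^2(\Omega)}\le 2^{-N}$; this will be needed for the envelope below. For a.e.\ $\x$ we then have
\[
|\Gamma_t(\x)-u(\x)|=(1-t)|u(\x)-\un(\x)|\le |u(\x)-\un(\x)|,
\]
so $\Gamma_t(\x)\to u(\x)$ uniformly in $t\in[0,1]$. Since $f_u$ is Carath\'eodory, $s\mapsto f_u(\x,s)$ is continuous for a.e.\ $\x$, and therefore $f_u(\x,\Gamma_t(\x))\to f_u(\x,u(\x))$ for each fixed $t$.

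Second, the integrand has a uniform bound. By \eqref{eq:f' bounds},
\[
|f_u(\x,\Gamma_t)-f_u(\x,u)|^2\le 4(\rho+\mu_f)^2
\]
for a.e.\ $\x$. This constant is integrable because $\Omega$ is bounded. Dominated convergence therefore gives \eqref{eq: int f' conv} for every fixed $t$.

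The main obstacle is upgrading this to uniform convergence in $t$, since pointwise convergence at each $t$ is not uniform in general. To handle it, introduce the $t$-independent modulus
\[
\omega_N(\x):=\sup\{|f_u(\x,s)-f_u(\x,u(\x))|:\ |s-u(\x)|\le |u(\x)-\un(\x)|\}.
\]
It is bounded by $2(\rho+\mu_f)$ and dominates the integrand for every $t\in[0,1]$. By continuity of $f_u(\x,\cdot)$ at $u(\x)$, together with $|u(\x)-\un(\x)|\to0$ a.e., we get $\omega_N(\x)\to0$ a.e. Measurability of $\omega_N$ follows because, by continuity in $s$, the supremum can be taken over a countable dense set of $s$ values, which also keeps the argument within the Carath\'eodory framework. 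The summability $\sum_N\|u-\un\|_{\L^2(\Omega)}<\infty$ from the subsequence choice is not strictly needed once $\omega_N$ is available, but it provides a.e.\ convergence robustly.

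Applying dominated convergence to $\omega_N^2$ then gives
\[
\sup_{t\in[0,1]}\into|f_u(\x,\Gamma_t)-f_u(\x,u)|^2\,\dx\le\into\omega_N^2\,\dx\to0,
\]
which is the asserted uniform convergence \eqref{eq: int f' conv}. An alternative route would be to argue by contradiction with a sequence $t_N$ and apply the fixed-$t$ argument to $\Gamma_{t_N}$, which also converges to $u$ a.e.; this yields the same conclusion.
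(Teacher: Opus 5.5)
Your proof is correct. On the one point that matters, it is more careful than the paper's.

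The paper fixes $t$ and passes to an a.e.-convergent subsequence. It then applies dominated convergence with the constant majorant $4(\rho+\mu_f)^2$, and infers uniformity in $t$ from the fact that this majorant does not depend on $t$. That inference is not valid in general: a $t$-independent bound on the integrands, together with convergence for each fixed $t$, does not control the supremum over $t$ of the integrals. What makes the uniform statement true here is the specific structure $|\Gamma_t(\x)-u(\x)|=(1-t)|u(\x)-\un(\x)|\le|u(\x)-\un(\x)|$.

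Your envelope $\omega_N$ exploits exactly this structure. It dominates the integrand for all $t$ simultaneously, is bounded by $2(\rho+\mu_f)$, and tends to zero a.e. A single application of dominated convergence to $\omega_N^2$ then gives $\sup_{t\in[0,1]}\into|f_u(\x,\Gamma_t)-f_u(\x,u)|^2\dx\to0$. This is precisely the form used later in \eqref{eq:contr}, so your route supplies the step the paper leaves unjustified. Your alternative via a sequence $t_N$ nearly attaining the supremum works equally well, and it avoids the measurability question for $\omega_N$ altogether.

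Two minor remarks. First, the admissible $s$-interval depends on $\x$, so the countable reduction should be written as a supremum over $s=u(\x)+\tau|u(\x)-\un(\x)|$ with $\tau\in[-1,1]\cap\mathbb{Q}$. Each term is then measurable by the Carath\'eodory property, and continuity in $s$ gives equality with the full supremum. Second, the requirement $\|u-\un\|_{\L^2(\Omega)}\le 2^{-N}$ is superfluous, and your text is inconsistent on whether it is needed. Plain a.e. convergence along a subsequence, chosen once and independently of $t$, is all the argument uses.
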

    \begin{proof}
        Fix an arbitrary $t\in [0,1]$. Since $\{\un\}_{N\geq 0}$ converges strongly in $\L^2(\Omega)$ for a subsequence, we can extract a further subsequence such that $u^\star_{N_j}(\x)\rightarrow u(\x)$ for almost every $\x\in \Omega$ as $j\rightarrow\infty$, and in turn $\Gamma_t \rightarrow u$ almost everywhere. 
        Hence, defining
        \begin{equation*}
            g_{N_j}(\x) := |f_u(\x,\Gamma_t(\x)) - f_u(\x,u(\x))|^2,
        \end{equation*}
        the subsequence $\{g_{N_j}\}_{j\geq 0}$ 
        converges to $0$ almost everywhere, thanks to the continuity of $f_u$. Furthermore, 
        by the bound on $f_u$ from \eqref{eq:f' bounds} and the triangle inequality we deduce
        \begin{equation*}
            0\leq \into g_{N_j}(\x) \, \dx \leq \into4(\rho+\mu_f)^2 \,\dx= 4|\Omega|(\rho+\mu_f)^2<\infty,
        \end{equation*}
        where the measure $|\Omega|$ is finite since $\Omega$ is bounded; this shows that each $g_{N_j}$ is integrable and dominated by a constant integrable function (independent of $t$). Applying the dominated convergence theorem yields
        \begin{equation*}
            \lim_{j\rightarrow \infty}\into g_{N_j}(\x)\; \dx =0,
        \end{equation*}
        which demonstrates \eqref{eq: int f' conv}.
    \end{proof}
    
        We are now ready to attend to the case $\limab>0$ in~\eqref{eq:def ell}.
        A straight-forward computation reveals that the second variation of the energy functional~$\E$ from~\eqref{eq:energy func} at $u\in\VV$ is given by
        \begin{equation*}
            \E''(u)(\varphi,v) = \into \grad \varphi\cdot \grad v \;\dx- \into f_u(\x,u)\varphi v \;\dx \qquad \forall \varphi,v \in \VV,
        \end{equation*}
        and since $f_u$ is bounded,
        the second variation is well-defined on $\VV$. In particular, the weak formulation of \eqref{eq:dual pb} becomes
        \begin{equation}\label{eq:2nd variation}
            \E''(u)(\phi,v) = \into (u-\un)v\,\dx \qquad \forall v\in \VV.
        \end{equation}
        Now, let us denote by $I_h\phi$ the nodal interpolation of $\phi$ from~\eqref{eq:graded estimate} over $\VV_N$. Picking $v=u-\un$ in \eqref{eq:2nd variation}, we have
        \begin{equation}\label{eq:2nd var decomp}
            \|u-\un\|^2_{\L^2(\Omega)}=
            \E''(u)(\phi,u-\un)=\E''(u)(\phi-I_h\phi,u-\un)+\E''(u)(I_h\phi,u-\un).
        \end{equation}
        Applying the triangle inequality to the first term of the right-hand side then yields
        \begin{equation}\label{eq:1st term}
             |\E''(u)(\phi-I_h\phi,u-\un)| \leq \into |\grad(\phi-I_h\phi) ||\grad(u-\un)|  \dx + \into |f_u(\x,u)|\, |\phi-I_h\phi|\,|u-\un|  \dx.
        \end{equation}
        By combining H\"older's inequality, \eqref{eq:f' bounds} and \eqref{eq:Poincare L2}, the second integral of \eqref{eq:1st term} is bounded by
        \begin{equation}\label{eq:1st term 2nd term}
            \into |f_u(\x,u)|\, |\phi-I_h\phi|\,|u-\un| \; \dx \leq (\rho+\mu_f)\CP^{\nicefrac{1}{2}}\|\grad(\phi-I_h\phi)\|_{\L^2(\Omega)}\|u-\un\|_{\L^2(\Omega)}.
        \end{equation}
        Furthermore, we recall from \eqref{eq:def ell} that $\nicefrac{b_N}{a_N}\rightarrow \limab^{-1} \in (0,\infty)$ as $N\rightarrow \infty$. Thus, for all $\varepsilon>0$ and all sufficiently large $N\geq 0$, it holds $\left|\nicefrac{b_N}{a_N}-\limab^{-1}\right|<\varepsilon$, which implies that
        \begin{equation*}
            0< b_N < \left(\limab^{-1}+\varepsilon\right)a_N;
        \end{equation*}
        we stress here that the right-hand side is indeed a strictly positive upper bound, since $\limab>0$ by assumption, and in turn, $a_N>0$ for all $N$ large enough. Employing~\eqref{eq:def an bn}, and applying the bound \eqref{eq: f Lipschitz}, the above estimate translates into
        \begin{equation}\label{eq:grad equiv L2}
            \|\grad(u-\un)\|_{\L^2(\Omega)}^2 < \left(\limab^{-1}+\varepsilon\right)(\rho+\mu_f)\|u-\un\|^2_{\L^2(\Omega)},
        \end{equation}
        and so by the Cauchy-Schwarz inequality
        \begin{equation}\label{eq:1st term 1st term}
            \into |\grad(\phi-I_h\phi)||\grad(u-\un)|  \dx \leq \left(\limab^{-1}+\varepsilon\right)^{\nicefrac{1}{2}}(\rho+\mu_f)^{\nicefrac{1}{2}}\|\grad(\phi-I_h\phi)\|_{\L^2(\Omega)}\|u-\un\|_{\L^2(\Omega)}.
        \end{equation}
        As a result, plugging \eqref{eq:1st term 2nd term} and \eqref{eq:1st term 1st term} into \eqref{eq:1st term}, gives
        \begin{equation}\label{eq:bound 1st term}
            |\E''(u)(\phi-I_h\phi,u-\un)| \leq C_{\ref{eq:bound 1st term}}\|\grad(\phi-I_h\phi)\|_{\L^2(\Omega)}\|u-\un\|_{\L^2(\Omega)},
        \end{equation}
        where $C_{\ref{eq:bound 1st term}}:=\left(\limab^{-1}+\varepsilon\right)^{\nicefrac{1}{2}}(\rho+\mu_f)^{\nicefrac{1}{2}}+(\rho+\mu_f)\CP^{\nicefrac{1}{2}}>0$ is independent of $N$. Finally, we conclude from the estimates \eqref{eq:graded estimate} and \eqref{Elliptic regularity} that
        \begin{equation}\tag{\ref{eq:bound 1st term}'}\label{eq:tilde bound 1st term}
            |\E''(u)(\phi-I_h\phi,u-\un)|
            \leq C_{\ref{eq:bound 1st term}}\GM \ER  \dim(\VV_N)^{-\nicefrac{1}{2}} \|u-\un\|^2_{\L^2(\Omega)}.
        \end{equation}
        
        We now treat the second term of \eqref{eq:2nd var decomp}. To this end, recall that $u$ is a solution of \eqref{Pb:weak form}, and that $\un\in\VV_N$ satisfies~\eqref{eq:un}. Hence, by the main theorem of calculus, 
        \begin{equation*}
            \int_0^1 \E''(\Gamma_t)(v,u-\un) \; \dt = \int_0^1 \frac{\d}{\dt}\la \E'(\Gamma_t),v\ra \; \dt = \la \E'(u)-\E'(\un),v\ra =0 \qquad \forall v \in \VV_N,
        \end{equation*}
        where $\Gamma_t:=tu+(1-t)\un$, cf.~Lem.~\ref{Lemma: int f' conv}. As a result, choosing $v=I_h\phi\in\VV_N$, we obtain
        \begin{align*}
            \E''(u)(I_h\phi,u-\un) &= \int_0^1 \left(\E''(u)(I_h\phi,u-\un)- \E''(\Gamma_t)(I_h\phi,u-\un)\right)  \dt \\
            &= \int_0^1\left(\into (f_u(\x,\Gamma_t)-f_u(\x,u))I_h\phi(u-\un) \dx \right) \dt,
        \end{align*}
        whence H\"older's inequality implies
        \begin{equation*}
            |\E''(u)(I_h\phi,u-\un)|\leq \|I_h\phi\|_{\L^{\infty}(\Omega)}\|u-\un\|_{\L^2(\Omega)}\int_0^1 \left(\into |f_u(\x,\Gamma_t)-f_u(\x,u)|^2 \; \dx \right)^{\nicefrac{1}{2}} \dt.
        \end{equation*}
        Recalling the bound \eqref{eq:Istab1}, we obtain
        \begin{equation*}
            \|I_h\phi\|_{\mathrm{C}^0(\Omega)} 
            \leq C\|\phi\|_{\htto} \leq C\ER \|u-\un\|_{\L^2(\Omega)}.
        \end{equation*}
        Thus,
        \begin{equation*}
            |\E''(u)(I_h\phi,u-\un)|\leq C\ER \|u-\un\|^2_{\L^2(\Omega)}\sup_{t\in[0,1]} \left(\into |f_u(\x,\Gamma_t)-f_u(\x,u)|^2  \dx \right)^{\nicefrac{1}{2}}.
        \end{equation*}
        Next, let us fix some 
        \begin{equation}\label{eq:choice}
        0<\varepsilon'<\nicefrac{1}{C\ER},
        \end{equation}
        and, up to extracting a further subsequence according to Lem.~\ref{Lemma: int f' conv}, choose $N\geq 0$ large enough such that
        \begin{equation}\label{eq:contr}
            \sup_{t\in[0,1]} \left(\into |f_u(\x,\Gamma_t)-f_u(\x,u)|^2 \; \dx \right)^{\nicefrac{1}{2}} < \varepsilon';
        \end{equation}
        cf. Lem.~\ref{Lemma: int f' conv}. Then,
        \begin{equation}\label{eq:bound 2nd term}
            |\E''(u)(I_h\phi,u-\un)|\leq \varepsilon'C\ER \|u-\un\|^2_{\L^2(\Omega)}.
        \end{equation}
        Altogether, we conclude from \eqref{eq:tilde bound 1st term} and \eqref{eq:bound 2nd term} that the decomposition \eqref{eq:2nd var decomp} is bounded by
        \begin{equation*}
            \|u-\un\|^2_{\L^2(\Omega)} \leq C_{\ref{eq:bound 1st term}}\GM \ER \dim(\VV_N)^{-\nicefrac{1}{2}} \|u-\un\|^2_{\L^2(\Omega)} + \varepsilon'C\ER \|u-\un\|^2_{\L^2(\Omega)}.
        \end{equation*}
        In particular, because $0<\varepsilon'C\ER<1$ we obtain
        \begin{equation*}
            \|u-\un\|^2_{\L^2(\Omega)} \leq \frac{C_{\ref{eq:bound 1st term}}\GM \ER}{1-\varepsilon'C\ER } \dim(\VV_N)^{-\nicefrac{1}{2}} \|u-\un\|^2_{\L^2(\Omega)}.
        \end{equation*}
        For $N$ sufficiently large so that
        \begin{equation*}
            \dim(\VV_N) > \left(\frac{C_{\ref{eq:bound 1st term}}\GM \ER}{1-\varepsilon'C\ER}\right)^2,
        \end{equation*}
        we obtain a contradiction, and we conclude that $\limab>0$ cannot occur in~\eqref{eq:def ell}.

    \begin{remark}
        The assumptions in \cite{Hardering2017} on the dual problem \eqref{eq:dual pb} are stronger due to the greater generality of their setting, which considers a broad class of functionals $\E:\H^1(\Omega)\rightarrow \R$ with $\Omega \subset \R^d$. In contrast, our analysis focuses specifically on semilinear diffusion-reaction problems, allowing us to work under weaker conditions. Specifically, we neither require strong coercivity for the energy~$\E$, nor higher regularity for $f$, nor any additional bounds on the third variation of $\E$.
    \end{remark}

\subsubsection{Optimal convergence results}

    We now summarize our findings in the following theorem.

    \begin{theorem}[Optimal convergence--general case]\label{thm:OCR Galerkin}
        Let $\Omega\subset \R^2$ be an open, bounded, polygonal domain. Suppose that Asm.~\ref{Ass: f} and Asm.~\ref{Ass:convergence} hold true for some $\rho>0$ and $\Delta t \in \Lambda_f(\rho)$. Let $\{\mesh_N\}_{N\ge 0}$ be a graded mesh family as in Def.~\ref{def:triangulation type}, and assume that the associated sequence of finite element spaces $\VV_N = \mathbb{S}^{1}(\Omega,\mesh_N)$, cf.~\eqref{eq:fes}, fulfills Asm.~\ref{ass:Galerkin density} as $N\rightarrow \infty$. Let $\{\un\}_{N\geq 0}$ denote the subsequence from \eqref{eq:weak* convergence} that converges weakly in $\VV$ and strongly in $\L^2(\Omega)$ to $u\in\VV\cap\htto$, a critical point of the energy functional $\E$ defined in \eqref{eq:energy func}, and hence a weak solution of \eqref{Pb:main PDE}. Furthermore, suppose 
        that $f_u$ is a Carathéodory function in the sense of~\eqref{eq:Caratheodory}, and that condition~\eqref{eq:Fredholm} holds for the dual problem \eqref{eq:dual pb} (being redundant if $\KK=\{0\}$). Then, up to extracting a further subsequence, we have
        \begin{equation}\label{eq:OCR general}
            \|u-u_N^\star\|_{\Delta t} \leq C\dim(\VV_N)^{-\nicefrac{1}{2}}\|u\|_{\htto},
        \end{equation}
        for a constant $C>0$ depending only on $\Omega$, $\Delta t$, $f$, and on the triangulation parameters $\boldsymbol{\beta}$ and $\kappa$.
    \end{theorem}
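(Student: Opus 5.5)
The proof assembles the case analysis of \S\ref{sc:OCR general case}. First, Cor.~\ref{cor:strong conv} lets us pass to a subsequence of $\{\un\}_{N\ge0}$ that converges strongly in $\VV$ to $u$; this requires only that $f_u$ be Carathéodory. Along this subsequence the ratios $a_N/b_N$ from \eqref{eq:def an bn} are well defined, since $b_N>0$ by \eqref{eq:sol}, and they are uniformly bounded by $\CP(\rho+\mu_f)$. By Bolzano--Weierstrass we extract a further subsequence along which the limit $\limab$ in \eqref{eq:def ell} exists and is finite. The proof then splits according to the sign of $\limab$.

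If $\limab\le0$, the preceding proposition for that case applies directly. For $\limab=0$ it reduces to Prop.~\ref{prop:an faster} via \eqref{eq:grad dominance}. For $\limab<0$ it reduces to the asymptotic monotonicity \eqref{eq: f monotonic}, then to Prop.~\ref{prop: Cea general}, and the rate follows from Rmk.~\ref{rmk:OCR ell<0}. Either way we obtain \eqref{eq:OCR general} after extracting a subsequence. In the case $\limab=0$ the estimate is first obtained for $\|\grad(u-\un)\|_{\L^2(\Omega)}$. We convert it to the $\|\cdot\|_{\Delta t}$-norm using \eqref{eq:Poincare L2}, i.e.\ $\|v\|_{\Delta t}\le(\CP+\Delta t)^{1/2}\|\grad v\|_{\L^2(\Omega)}$, which costs only a constant depending on $\Omega$ and $\Delta t$.

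The remaining case $\limab>0$ must be excluded by contradiction, and this is the main step. We use the Aubin--Nitsche argument from \S\ref{ssc:Aubin-Nitsche} onward. Under \eqref{eq:Fredholm}, Lem.~\ref{lemma:dual pb} gives a dual solution $\phi\in\htto$ with $\|\phi\|_{\htto}\le\ER\|u-\un\|_{\L^2(\Omega)}$. We split $\|u-\un\|_{\L^2}^2$ as in \eqref{eq:2nd var decomp}. The interpolation part is bounded by \eqref{eq:tilde bound 1st term}, which uses $\limab>0$ through \eqref{eq:grad equiv L2} to control the gradient error by the $\L^2$ error. The discrete part is bounded by \eqref{eq:bound 2nd term}, which uses the Galerkin orthogonality \eqref{eq:un}, the $\mathrm{C}^0$-stability \eqref{eq:Istab1}, and Lem.~\ref{Lemma: int f' conv} with the choice \eqref{eq:choice}. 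Together these give
\[
\|u-\un\|^2_{\L^2(\Omega)}\le\Big(C_{\ref{eq:bound 1st term}}\GM\ER\dim(\VV_N)^{-1/2}+\varepsilon'C\ER\Big)\|u-\un\|^2_{\L^2(\Omega)}.
\]
The factor in parentheses is strictly less than $1$ once $\dim(\VV_N)$ is large. Since $u\ne\un$ by \eqref{eq:sol}, this forces $\|u-\un\|_{\L^2(\Omega)}=0$, which is a contradiction. Hence $\limab>0$ cannot occur.

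The delicate point is bookkeeping of subsequences and constants. Each extraction (strong convergence, convergence of $a_N/b_N$, and the a.e.\ convergence in Lem.~\ref{Lemma: int f' conv}) must be nested, so that the contradiction holds for the same subsequence on which $\limab$ is defined. Condition \eqref{eq:Fredholm} must hold along that subsequence. We must also check that the final constant depends only on $\Omega$, $\Delta t$, $f$, $\boldsymbol\beta$ and $\kappa$. In the quasi-optimal case this is immediate from $(1+r(\Delta t))/(1-\theta(\Delta t))$ and $\GM$. In the gradient-dominance case we fix $\varepsilon$, say $\varepsilon=\nicefrac12$, so that the constant $2(1+\CP(\rho+\mu_f))\GM$ depends only on $f$ and $\Omega$.
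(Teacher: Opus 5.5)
Your proposal is correct and follows the paper's argument essentially step by step. You obtain strong convergence via Cor.~\ref{cor:strong conv} and extract the limit $\limab$ of $a_N/b_N$. You treat $\limab\le 0$ through Prop.~\ref{prop:an faster} and Prop.~\ref{prop: Cea general}, and you exclude $\limab>0$ by the Aubin--Nitsche contradiction built on Lem.~\ref{lemma:dual pb} and Lem.~\ref{Lemma: int f' conv}. Your remarks on nesting the subsequences and on converting the gradient bound to the $\|\cdot\|_{\Delta t}$-norm with the factor $(\CP+\Delta t)^{1/2}$ only make explicit what the paper leaves implicit.
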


Thanks to the Lipschitz continuity \eqref{eq: E' Lipschitz} of $\E'$, the residuals exhibit the same optimal rate of decay as the underlying sequence.

    \begin{corollary}[Optimal rate for residuals]\label{cor:residual OCR}
        Suppose that the hypotheses of Theorem~\ref{thm:OCR Galerkin} hold true. Then, the sequence of corresponding residuals $\{\E'(\un)\}_{N\geq 0}$ satisfies the error bound
        \begin{equation*}
            \|\E'(\un)\|_{\VV'} \leq C \dim(\VV_N)^{-\nicefrac{1}{2}}\|u\|_{\htto},
        \end{equation*}
        for a constant $C>0$ depending only on $\Omega$, $\Delta t$, $f$, and on the triangulation parameters $\boldsymbol{\beta}$ and $\kappa$.
    \end{corollary}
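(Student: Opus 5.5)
The plan is to reduce the residual bound directly to Theorem~\ref{thm:OCR Galerkin} via the Lipschitz continuity of $\E'$ in Lem.~\ref{lemma: E' Lipschitz}. Since $u$ is a critical point of $\E$, we have $\E'(u)=0$ in $\VV'$ by \eqref{Pb:weak form}. Hence, for every $w\in\VV$,
\[
\la \E'(\un),w\ra=\la \E'(\un)-\E'(u),w\ra .
\]
Applying \eqref{eq: E' Lipschitz} with $\lambda=\Delta t\in\Lambda_f(\rho)$ then gives
\[
|\la \E'(\un),w\ra|\le L_{\E'}(\Delta t)\,\|u-\un\|_{\Delta t}\|w\|_{\Delta t}.
\]

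The next step is to convert $\|w\|_{\Delta t}$ into the $\VV$-norm $\|\grad w\|_{\L^2(\Omega)}$, which defines the dual norm $\|\cdot\|_{\VV'}$. By the Poincaré inequality \eqref{eq:Poincare L2},
\[
\|w\|_{\Delta t}^2\le(\CP+\Delta t)\|\grad w\|_{\L^2(\Omega)}^2 .
\]
Dividing by $\|\grad w\|_{\L^2(\Omega)}$ and taking the supremum over $w\neq0$ yields
\[
\|\E'(\un)\|_{\VV'}\le L_{\E'}(\Delta t)(\CP+\Delta t)^{\nicefrac12}\|u-\un\|_{\Delta t}.
\]

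Finally, we insert \eqref{eq:OCR general} from Theorem~\ref{thm:OCR Galerkin}, along the same (further) subsequence on which that estimate holds. The resulting constant is $L_{\E'}(\Delta t)(\CP+\Delta t)^{\nicefrac12}C$. It depends only on $\Delta t$, $\Omega$ (through $\CP$), $f$ (through $\sigma_f$), and the constant of Theorem~\ref{thm:OCR Galerkin}, which is consistent with the claimed dependencies.

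There is no substantial obstacle. The only point needing care is that the estimate holds along the subsequence selected in Theorem~\ref{thm:OCR Galerkin}, so the corollary should be read in that same sense.
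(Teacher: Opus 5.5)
Your proof is correct and follows the same route as the paper: you use $\E'(u)=0$, apply the Lipschitz bound of Lem.~\ref{lemma: E' Lipschitz} with $\lambda=\Delta t$, convert $\|\cdot\|_{\Delta t}$ to the $\VV$-norm, and then invoke Thm.~\ref{thm:OCR Galerkin} along its subsequence. Your conversion factor $(\CP+\Delta t)^{\nicefrac12}$, which comes from $\|w\|_{\Delta t}\le(\CP+\Delta t)^{\nicefrac12}\|\grad w\|_{\L^2(\Omega)}$, is the right one; the paper's factor $\sqrt{\Delta t}$ would need the false inequality $\|w\|_{\Delta t}\le\sqrt{\Delta t}\,\|w\|_{\VV}$, although this slip only affects the constant.
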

    \begin{proof}
        Using the fact that $u$ is a solution of \eqref{Pb:weak form}, and recalling~\eqref{eq:lambda norm}, implies
        \begin{equation*}
        \|\E'(\un)\|_{\VV'} 
        = \sup_{v\in \VV\backslash\{0\}}\frac{|\la \E'(\un)-\E'(u),v\ra|}{\|v\|_{\VV}} 
        \le\sqrt{\Delta t} \sup_{v\in \VV\backslash\{0\}}\frac{|\la \E'(\un)-\E'(u),v\ra|}{\|v\|_{\Delta t}}.
        \end{equation*}
        Thus, upon using Lem.~\ref{lemma: E' Lipschitz}, we arrive at
        \begin{equation*}
        \|\E'(\un)\|_{\VV'}\leq L_{\E'}(\Delta t)\sqrt{\Delta t}\|u-\un\|_{\Delta t}.
    \end{equation*}
    The claim now follows from Thm.~\ref{thm:OCR Galerkin}.
    \end{proof}

\subsubsection{Degenerate solutions}
    Recalling the decomposition \eqref{eq: L2 orthogonal decomposition}, we can generalize the optimal rate result from Thm.~\ref{thm:OCR Galerkin} to (not fully) degenerate solutions, i.e., to the case when the right-hand side of the dual problem~\eqref{eq:dual pb} is decomposed accordingly by
    \begin{equation}\label{eq:errdecomp}
    u-\un=\enpar+\enperp,\qquad\text{with } \enpar\in\KK,\, \enperp\in\KK^\perp,
    \end{equation}
    and $\enpar\neq 0$, the magnitude of which can be controlled uniformly.

    \begin{theorem}[Optimal convergence for degenerate solutions]\label{thm:OCR Galerkin deg}
        Under the assumptions of the previous Thm.~\ref{thm:OCR Galerkin}, suppose that the kernel component never asymptotically dominates the total error, i.e., there exists a constant $0\leq \delta <1$ (independent of $N$) such that, for the decomposition~\eqref{eq:errdecomp}, we have
        \begin{equation}\label{eq:non-degeneracy}
            \|\enpar\|_{\L^2(\Omega)} \leq \delta \|u-\un\|_{\L^2(\Omega)},
        \end{equation}
        for all sufficiently large~$N$.
        Then, up to extracting a subsequence, it holds
        \begin{equation*}
            \|u-u_N^\star\|_{\Delta t} \leq C\dim(\VV_N)^{-\nicefrac{1}{2}}\|u\|_{\htto},
        \end{equation*}
        for a constant $C>0$ depending only on $\Omega$, $\Delta t$, $f$, and on the triangulation parameters $\boldsymbol{\beta}$ and $\kappa$.
    \end{theorem}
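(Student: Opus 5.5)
The plan is to repeat the case analysis of \S\ref{sc:OCR general case} verbatim. The cases $\limab\le 0$ need neither the dual problem nor condition~\eqref{eq:Fredholm}, so they give the rate as before. The only new work is to exclude $\limab>0$ by a modified Aubin--Nitsche argument in which the dual problem~\eqref{eq:dual pb} is posed with right-hand side $\enperp$ instead of $u-\un$. Since $\enperp\in\KK^\perp$, the solvability and stability analysis behind Lem.~\ref{lemma:dual pb} applies without change. It yields $\phi\in\DD^\perp_u\cap\htto$ with
\[
\E''(u)(\phi,v)=\into \enperp v\,\dx\quad\forall v\in\VV,\qquad \|\phi\|_{\htto}\le \ER\|\enperp\|_{\L^2(\Omega)}\le \ER\|u-\un\|_{\L^2(\Omega)}.
\]

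Next I test with $v=u-\un$. Because the decomposition~\eqref{eq:errdecomp} is $\L^2$-orthogonal, the left-hand side becomes $\into\enperp(u-\un)\,\dx=\|\enperp\|^2_{\L^2(\Omega)}$. Pythagoras and \eqref{eq:non-degeneracy} then give
\[
\|\enperp\|^2_{\L^2(\Omega)}=\|u-\un\|^2_{\L^2(\Omega)}-\|\enpar\|^2_{\L^2(\Omega)}\ge(1-\delta^2)\|u-\un\|^2_{\L^2(\Omega)}.
\]
I then split $\E''(u)(\phi,u-\un)=\E''(u)(\phi-I_h\phi,u-\un)+\E''(u)(I_h\phi,u-\un)$ as in~\eqref{eq:2nd var decomp}.

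The first term is bounded exactly as in \eqref{eq:bound 1st term}--\eqref{eq:tilde bound 1st term}. This uses \eqref{eq:grad equiv L2}, which is valid since $\limab>0$, together with \eqref{eq:graded estimate} and the new stability bound. The result is a bound $C\dim(\VV_N)^{-\nicefrac12}\|u-\un\|^2_{\L^2(\Omega)}$. The second term is treated as before: the Galerkin orthogonality of $\E'(u)-\E'(\un)$ on $\VV_N$, the path $\Gamma_t$, \eqref{eq:Istab1} and Lem.~\ref{Lemma: int f' conv} bound it by $\varepsilon' C\ER\|u-\un\|^2_{\L^2(\Omega)}$ along a further subsequence.

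Combining the three estimates gives
\[
(1-\delta^2-\varepsilon' C\ER)\|u-\un\|^2_{\L^2(\Omega)}\le C\dim(\VV_N)^{-\nicefrac12}\|u-\un\|^2_{\L^2(\Omega)}.
\]
I choose $\varepsilon'<(1-\delta^2)/(C\ER)$, which replaces \eqref{eq:choice}; this is possible precisely because $\delta<1$. Since $u-\un\neq0$ by~\eqref{eq:sol}, letting $\dim(\VV_N)\to\infty$ produces a contradiction, so $\limab>0$ is excluded. The remaining cases $\limab\le0$ then yield the rate via Prop.~\ref{prop:an faster} and Rmk.~\ref{rmk:OCR ell<0}, with the constants as stated.

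The main obstacle is to check that the stability constant of Lem.~\ref{lemma:dual pb} is uniform in $N$ when the right-hand side is $\enperp$ rather than $u-\un$. This holds because the bounded inverse in \eqref{eq:ellstab} depends only on $\PP$, that is on $u$ and $f$, and not on the data. A subtler point is that the kernel $\KK$ is fixed, since it depends only on $u$, so the decomposition~\eqref{eq:errdecomp} is well defined for every $N$.
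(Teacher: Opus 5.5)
Your proposal is correct and matches the paper's proof step for step. Both pose the dual problem with data $\enperp\in\KK^\perp$, test with $u-\un$ and use the orthogonal decomposition (your $\|\enperp\|^2_{\L^2(\Omega)}\ge(1-\delta^2)\|u-\un\|^2_{\L^2(\Omega)}$ is the paper's $\|u-\un\|^2_{\L^2(\Omega)}=\|\enpar\|^2_{\L^2(\Omega)}+\E''(u)(\phi,u-\un)$), apply the same two-term splitting, and choose $\varepsilon'<(1-\delta^2)/(C\ER)$ to force a contradiction for large $\dim(\VV_N)$, which excludes $\limab>0$; your remarks that the cases $\limab\le 0$ are unaffected and that $\ER$ depends only on $\PP$ simply make explicit what the paper leaves implicit.
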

    \begin{proof}
        Consider the dual problem
        \begin{equation*}
            -\Delta\phi-f_u(\cdot,u)\phi = \enperp\qquad \text{in }\VV',
        \end{equation*}
        which admits a solution $\phi\in \VV \cap \htto$ by Lem.~\ref{lemma:dual pb}, since condition~\eqref{eq:Fredholm} is satisfied with $\enperp\in \KK^\perp$. Similarly as in~\eqref{eq:2nd variation}, we have the identity
        \begin{equation}\label{eq:2nd variation deg}
            \E''(u)(\phi,v) = \la v,\enperp\ra_{\L^2(\Omega)} \qquad \forall v\in \VV.
        \end{equation}
        From the $\L^2(\Omega)$-orthogonal decomposition \eqref{eq: L2 orthogonal decomposition}, letting $v=u-\un$ in~\eqref{eq:2nd variation deg}, we deduce
        \begin{equation*}
            \|u-\un\|_{\L^2(\Omega)}^2  = \|\enpar\|_{\L^2(\Omega)}^2 + \la \enperp,u-\un\ra_{\L^2(\Omega)} = \|\enpar\|_{\L^2(\Omega)}^2 + \E''(u)(\phi,u-\un).
        \end{equation*} 
        Applying analogous arguments as in the proof of Thm.~\ref{thm:OCR Galerkin}, with the choice $0<\varepsilon'<\nicefrac{1-\delta^2}{C\ER}$, cf.~\eqref{eq:choice}, where $0\leq \delta<1$ is the constant from \eqref{eq:non-degeneracy}, we obtain a contradiction as soon as
        \begin{equation*}
            \dim(\VV_N) > \left(\frac{C_{\ref{eq:bound 1st term}}\GM \ER}{1-\varepsilon'C\ER -\delta^2}\right)^2,
        \end{equation*}
        cf.~\eqref{eq:contr}, i.e., for any $N\geq 0$ sufficiently large.
    \end{proof}

    \begin{remark}
        From an operational point of view, condition \eqref{eq:non-degeneracy} permits a ``$\delta$-percentage'' of the error to lie in the kernel $\KK$ while excluding the practically unrealistic case in which the error is fully degenerate and the (linearized) residual (and therefore any estimator built from it) becomes asymptotically blind to the orthogonal part $\|\enperp\|_{\L^2(\Omega)}$ of the error. In this sense, the hypothesis is both natural and moderate: On the one hand, it rules out complete loss of information in the refinement process, and, on the other hand, it is necessary for any estimate coming from the dual problem to yield control of the full error.
        
        At the same time, in the degenerate case, the parameter $\delta$ cannot be taken arbitrarily small in general. Indeed, denote by $\overline{\mu}$ the smallest eigenvalue of the operator $\PP$ from~\eqref{eq:PP}, whose corresponding eigenfunction lies in $\KK^\perp$. If $\overline{\mu}>\rho+\mu_f$, then from \eqref{eq:grad equiv L2}, we obtain the quantitative lower bound
        \begin{equation*}
            \delta^2 > 1- \frac{\left(\limab^{-1}+\varepsilon\right)(\rho+\mu_f)}{\overline{\mu}-(\rho+\mu_f)}.
        \end{equation*}
    \end{remark}

\section {Convergence of iterative Galerkin solutions}\label{sc:approx}

    In case that we do not have the exact Galerkin solutions $\{\un\}_{N\geq 0}$ from~\eqref{eq:un} at hand, which will be the typical situation in practice, it is still possible to get an arbitrarily close approximation to a weak solution 
    of~\eqref{Pb:main PDE} through a (finite) iterative process, and to achieve the optimal convergence rate from~\eqref{eq:OCR general}. 
    
    Specifically, given an initial guess $u^0_N\in \L^2(\Omega)$, we perform the iterative scheme \eqref{eq:forms iteration scheme W} on the finite element space $\WW=\VV_N$, generating the corresponding sequence $\{u^n_N\}_{n\geq 0}\subset \VV_N$. Since $\VV_N$ is finite-dimensional, the sequence converges strongly (up to a subsequence) to an exact Galerkin solution $\un\in \VV_N$ of \eqref{eq:un}, see Thm.~\ref{thm: convergence in closed sbspc}. Furthermore, for every fixed $\Delta t>0$ satisfying Asm.~\ref{Ass: f}, thanks to~\eqref{eq:Galerkin orthogonality} and Lem.~\ref{lemma: E' Lipschitz}, we have
    \begin{equation*}
        \|\E'(u^n_N)\|_{\VV'_N} := \sup_{v\in \VV_{N}} \frac{\la\E'(u^n_N)-\E'(\un),v\ra}{\|v\|_{\VV}} \leq L_{\E'}(\Delta t)\|u^n_N-\un\|_{\Delta t} \rightarrow 0,
    \end{equation*}
    as $n\rightarrow \infty$. In particular, for any choice of strictly positive real numbers $\{\varepsilon_N\}_{N\geq 0}$ with $\varepsilon_N \rightarrow 0$ as $N\rightarrow\infty$, we have the residual bound
    \begin{equation}\label{eq: finite residual epsilon}
         \|\E'(\unn)\|_{\VV'_N}
         \leq \varepsilon_N,\qquad N\ge 0,
    \end{equation}
    provided that the final iteration index on the current space $\VV_N$, signified by $n^\star = n^\star(N)$, is chosen sufficiently large; we will use the notation
    \begin{equation}\label{eq: sequence approximations}
        \u{}:=\unn, \qquad N\geq 0,
    \end{equation}
    to denote the corresponding approximation of~$\un\in\VV_N$.
    
    Similarly as before, we construct approximations $\{\u{}\}_{N\geq 0}$ on a sequence of hierarchical spaces $\{\VV_N\}_{N\ge 0}$, cf.~Asm.~\ref{ass:Galerkin density}, by restarting the iteration~\eqref{eq:forms iteration scheme W} on each enriched space $\VV_{N+1}$ with the canonically embedded initial guess 
    \begin{equation}\label{eq:u0b}
    u_{N+1}^0:=\u{}\in \VV_N\hookrightarrow\VV_{N+1},
    \end{equation}
    cf.~\eqref{eq:u0}. Under Asm.~\ref{Ass:convergence}, by applying the same line of arguments as in \S\ref{sc: convergence}, we may extract a subsequence, again denoted by $\{\u{}\}_{N\geq 0}$, which converges weakly in $\VV$ and strongly in $\L^2(\Omega)$ to some $\uu\in \VV$, and for which
    \begin{equation}\label{eq:weak* convergence approx}
        \lim_{N\rightarrow \infty}\la \E'(\u{}),v \ra = \la \E'(\uu),v\ra \qquad \forall v \in \VV.
    \end{equation}
    It remains to show that the above limit $\uu\in\VV$ is a weak solution of~\eqref{Pb:main PDE}.
    
    \begin{proposition}\label{prop: approx critical pt}
        Suppose Asm.~\ref{Ass: f}, Asm.~\ref{Ass:convergence} and Asm.~\ref{ass:Galerkin density} are satisfied for some $\rho>0$ and $\Delta t\in \Lambda_f(\rho)$. Moreover, consider a sequence of strictly positive real numbers $\{\varepsilon_N\}_{N\geq 0}$, with $\varepsilon_N\rightarrow0$ as $N\rightarrow\infty$, and assume that the approximations $\{\u{}\}_{N\geq 0}$ from \eqref{eq: sequence approximations} satisfy \eqref{eq: finite residual epsilon}. Then, the weak limit $\uu\in \VV$ in \eqref{eq:weak* convergence approx} of the sequence $\{\u{}\}_{N\geq 0}$ is a critical point of $\E$, i.e., a weak solution of \eqref{Pb:main PDE}, and, in fact, $\u{}\to\uu$ strongly in $\VV$.
    \end{proposition}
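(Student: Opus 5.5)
The plan follows the proof of Prop.~\ref{prop: u critical Galerkin}, with the exact Galerkin orthogonality \eqref{eq:Galerkin orthogonality} replaced by the residual bound \eqref{eq: finite residual epsilon}. Strong convergence is then obtained by repeating the contradiction argument of Cor.~\ref{cor:strong conv}.

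\emph{Step 1: $\uu$ is a critical point.} Fix $v\in\VV$. By Asm.~\ref{ass:Galerkin density} we pick $v_N\in\VV_N$ with $v_N\to v$ in $\VV$, and split
\[
\la\E'(\uu),v\ra=\la\E'(\u{}),v_N\ra+\la\E'(\u{}),v-v_N\ra+\la\E'(\uu)-\E'(\u{}),v\ra .
\]
The first term is bounded by $\varepsilon_N\|v_N\|_{\VV}\to0$, using \eqref{eq: finite residual epsilon} and the boundedness of $\{v_N\}$. The third term tends to zero by \eqref{eq:weak* convergence approx}. For the second term, $\{\u{}\}$ is bounded in $\VV$, being weakly convergent. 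Lem.~\ref{lemma: E' Lipschitz} applied against the fixed element $0$ then gives $\|\E'(\u{})\|_{\VV'}\le C$ uniformly, so this term is at most $C\|v-v_N\|_{\VV}\to0$. Hence $\la\E'(\uu),v\ra=0$ for all $v\in\VV$.

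\emph{Step 2: strong convergence.} Pick any $w_N\in\VV_N$ with $w_N\to\uu$ in $\VV$; for instance $w_N=I_h\uu$ if $\uu\in\htto$ by Thm.~\ref{thm:beta}, or simply the density sequence. Write $e_N:=\uu-\u{}$. Then
\[
\|\grad e_N\|^2_{\L^2(\Omega)}=\la\E'(\uu)-\E'(\u{}),e_N\ra+\into (f(\x,\uu)-f(\x,\u{}))e_N\dx .
\]
The last integral is at most $(\rho+\mu_f)\|e_N\|^2_{\L^2(\Omega)}\to0$, by Lem.~\ref{lemma: f Lipschitz} and strong $\L^2$ convergence. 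For the first term we use $\E'(\uu)=0$ and split $e_N=(\uu-w_N)+(w_N-\u{})$:
\begin{itemize}
\item $-\la\E'(\u{}),\uu-w_N\ra$ is bounded by $C\|\uu-w_N\|_{\VV}\to0$, using the uniform $\VV'$ bound from Step 1.
\item $-\la\E'(\u{}),w_N-\u{}\ra$ has $w_N-\u{}\in\VV_N$, so it is bounded by $\varepsilon_N\|w_N-\u{}\|_{\VV}\le C\varepsilon_N\to0$ via \eqref{eq: finite residual epsilon}.
\end{itemize}
Hence $\|\grad e_N\|_{\L^2(\Omega)}\to0$.

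This direct energy identity avoids the graded-mesh rate and the Carathéodory hypothesis on $f_u$. Lem.~\ref{lemma: f Lipschitz} nominally assumes continuity of $f_u$, but the Lipschitz bound \eqref{eq: g Lipschitz} gives the same estimate under Asm.~\ref{Ass: f} alone.

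\emph{Main obstacle.} The delicate point is that the residual bound \eqref{eq: finite residual epsilon} only controls test functions in $\VV_N$. Every use of it must therefore be routed through a discrete surrogate ($v_N$ or $w_N$), while the non-discrete remainder is absorbed using the uniform $\VV'$-boundedness of $\E'(\u{})$. That boundedness rests on the $\VV$-boundedness of the iterates, which comes from the energy decrease under Asm.~\ref{Ass:convergence}.

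Note also that the whole argument is carried out along the extracted subsequence from \eqref{eq:weak* convergence approx}. I would verify that the initial guesses \eqref{eq:u0b} keep the energies $\E(\u{})$ bounded, since this is what guarantees the weak compactness used throughout.
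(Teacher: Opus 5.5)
Your proof is correct. Step~1 is the paper's argument, except that you get the uniform $\VV'$-bound on $\E'(\u{})$ from Lem.~\ref{lemma: E' Lipschitz} and the boundedness of the iterates, while the paper applies the uniform boundedness principle to the weak$*$-convergent sequence; both work.

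Step~2 takes a genuinely different route. The paper first uses Thm.~\ref{thm:beta} to place $\uu$ in $\htto$. It then reruns the contradiction argument of Cor.~\ref{cor:strong conv}: assuming no subsequence converges strongly, it extracts one with $\|\grad(\uu-\u{})\|_{\L^2(\Omega)}\to c>0$ and checks the gradient-dominance condition of Prop.~\ref{prop:an faster}. It adapts that proposition by replacing $\u{}$ with the graded-mesh interpolant $I_h\uu$, which costs the extra term $\varepsilon_N\|I_h\uu-\u{}\|_{\VV}$. Its conclusion is only that a further subsequence converges strongly.

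You instead read $\|\grad(\uu-\u{})\|_{\L^2(\Omega)}\to 0$ directly off the energy identity, with an arbitrary discrete approximating sequence $w_N$. The estimates are essentially the same; in fact the paper's bounds already force the gradient error to zero, so the contradiction is not needed. Your version gains three things:
\begin{enumerate}[(i)]
\item It gives convergence of the whole extracted sequence, which matches the proposition as stated (``$\u{}\to\uu$ strongly'') rather than a further subsequence.
\item It needs neither the corner-weighted regularity of $\uu$ nor the interpolation bound \eqref{eq:graded estimate}, so it holds for any nested dense family satisfying Asm.~\ref{ass:Galerkin density}.
\item It avoids the Carathéodory hypothesis on $f_u$. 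This hypothesis is assumed in Cor.~\ref{cor:strong conv} and Prop.~\ref{prop:an faster} but not in the present proposition. You get around it by noting that the pointwise derivative bounds give the estimate of Lem.~\ref{lemma: f Lipschitz} via the mean value theorem, possibly with a different constant.
\end{enumerate}
The paper's route mainly reuses its earlier machinery; neither route yields a rate here.

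Your closing caveat about bounded energies concerns the existence of the weak limit in \eqref{eq:weak* convergence approx}. The proposition takes that limit as given, so the caveat is not a gap in your argument.
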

    
    \begin{proof}
        Pick an arbitrary $v\in \VV$, and an appropriate sequence $\{v_N\}_{N\geq 0}$ converging strongly to $v$ as $N\to\infty$, and with $v_N\in \VV_N$ for each $N$. Similarly to the proof of Prop.~\ref{prop: u critical Galerkin}, and motivated by Asm.~\ref{ass:Galerkin density}, we apply the decomposition
        \begin{equation}\label{eq: u critical point E approx}
            \la \E'(\uu), v\ra = \la \E'(\u{}), v_N\ra + \la \E'(\u{}),v-v_N\ra + \la \E'(\uu)-\E'(\u{}),v\ra.
        \end{equation}
        Owing to~\eqref{eq:weak* convergence approx}, the third term on the right-hand side vanishes. Furthermore, due to the weak$*$-convergence, we note that the sequence $\{\E'(\u{})\}_{N\geq 0}$ is bounded in $\VV'$, and thus,
        \begin{equation*}
            |\la \E'(\u{}),v-v_N\ra| \leq \|\E'(\u{})\|_{\VV'}\|v-v_N\|_{\VV}\rightarrow 0,
        \end{equation*}
        as $N\rightarrow \infty$.
        Contrary to the proof of Prop.~\ref{prop: u critical Galerkin}, however, $\u{}$ is typically \textit{not} a weak solution of \eqref{Pb:weak form W}, and hence, we need to appropriately control the first term on the right-hand side of \eqref{eq: u critical point E approx}. To this end,
        using \eqref{eq: finite residual epsilon}, we observe that
        \begin{equation*}
           |\la \E'(\u{}), v_N\ra| \leq \|\E'(\u{})\|_{\VV'_N}\|v_N\|_{\VV}\leq \varepsilon_N\|v_N\|_{\VV}.
        \end{equation*}
        As the sequence $\{v_N\}_{N\geq 0}$ converges in $\VV$, it is bounded; therefore, exploiting that $\varepsilon_N\rightarrow 0$, we infer that
        $
            |\la \E'(\u{}), v_N\ra| \rightarrow 0$ as $N\rightarrow \infty$. Recalling~\eqref{eq: u critical point E approx}, we conclude that $\la\E'(\uu),v\ra=0$ for all $v\in\VV$, i.e., $\uu$ is a critical point of $\E$, and thus a solution of~\eqref{Pb:main PDE}.
            
    From Thm.~\ref{thm:beta} we obtain that $\uu \in \VV \cap \htto$, and we may therefore retrace the contradiction argument used in the proof of Cor.~\ref{cor:strong conv} for the sequence $\{\u{}\}_{N\geq 0}$. The only step that requires adaptation is the replacement of $\u{}$ by $I_h\uu$ in the duality product \eqref{eq: decomp gradients}, which, in the exact Galerkin case of $\un$, followed from~\eqref{eq:Galerkin orthogonality}; specifically, here, we need to estimate the second term on the right-hand side of the identity
    \[
    \la\E'(\uu)-\E'(\u{}),\uu-\u{}\ra
    =\la\E'(\uu)-\E'(\u{}),\uu-I_h\uu\ra
    -\la\E'(\u{}),I_h\uu-\u{}\ra.
    \]
    In light of~\eqref{eq: finite residual epsilon}, and by using the boundedness of $\{\u{}\}_{N\ge0}$ in $\VV$ as well as of the interpolation operator $I_h$, viz.
    \[
    \|I_h\uu\|_{\VV}
    \le\|\uu\|_{\VV}+\|\uu-I_h\uu\|_{\VV}\le C\|\uu\|_{\htto},
    \]
    see~\eqref{eq:graded estimate}, we have
    \begin{equation*}
        |\la \E'(\u{}), I_h\uu - \u{}\ra| \leq \varepsilon_N \|I_h\uu - \u{}\|_{\VV}\rightarrow 0
    \end{equation*}
    as $N\to\infty$, since $\varepsilon_N\rightarrow 0$. In particular, a weaker analog of Prop.~\ref{prop:an faster} holds for the approximations $\{\u{}\}_{N\geq 0}$, for which $\|\grad(\uu-\u{})\|_{\L^2(\Omega)}\rightarrow 0$ as $N\rightarrow \infty$, without a rate in terms of $\dim(\VV_N)$. This is sufficient for the contradiction argument of Cor.~\ref{cor:strong conv} to apply verbatim, and we conclude that there exists a subsequence of $\{\u{}\}_{N\geq 0}$ converging strongly in $\VV$.
    \end{proof}

    \begin{remark}\label{rmk:OCR approx}
        Within the setting of Thm.~\ref{thm:OCR Galerkin}, an approximate solution to~\eqref{Pb:main PDE} satisfying a prescribed error tolerance $\tau$ can be obtained in a finite number of operations. Specifically, let $\{\VV_N\}_{N\geq 0}$ be the subsequence of finite element spaces from Thm.~\ref{thm:OCR Galerkin}, and $\un\in\VV_N$, for subsequence indices~$N$, the associated Galerkin solutions, with a limit solution $u\in \VV\cap \htto$, such that~\eqref{eq:OCR general} holds true. Then, given a subsequence index~$N$ with $\dim(\VV_N)\gtrsim\tau^{-2}$, 
        consider the sequence
        $\{u^n_N\}_{n\geq 0}\subset \VV_N$ generated by~\eqref{eq:forms iteration scheme W} on $\WW=\VV_N$.
        \begin{enumerate}[(i)]
            \item In the general case, Thm.~\ref{thm: convergence in closed sbspc} guarantees that $\{u^n_N\}_{n\geq 0}$ admits a subsequence that converges strongly to \textit{some} Galerkin solution of~\eqref{eq:un}. If this limit coincides with the Galerkin solution $\un$ from the subsequence provided by Thm.~\ref{thm:OCR Galerkin}, then $\|\un - u^{n'}_N\|_{\Delta t}\to 0$, and consequently, there exists $n^\star=n^\star(N,\tau)\in\N$ such that $\|\un - u^{n'}_N\|_{\Delta t}\leq c\tau$ for all subsequence indices $n'\geq n^\star$, where $c>0$ is a constant independent of $\tau$ and of~$N$. Moreover, employing~\eqref{eq:OCR general}, the triangle inequality then gives
            \begin{equation}\label{eq:error tol remark}
                \|u - u^n_N\|_{\Delta t} 
                \lesssim \dim(\VV_N)^{-\nicefrac12}
                + \|\un - u^n_N\|_{\Delta t} \lesssim\tau \qquad \forall n'\geq n^\star.
            \end{equation}
            This shows that the overall error
            can be driven below $\mathcal{O}(\tau)$ by choosing
            $\dim(\VV_N)$ large enough, and by iterating the scheme~\eqref{eq:forms iteration scheme W} sufficiently often on~$\VV_N$, so that the two terms arising from the application of the triangle inequality above are appropriately balanced in terms of $\tau$.
            The iteration count $n^\star$ needed to reach this threshold is finite, though no explicit formula is available in the general case. We also stress that the choice of the initial guess on each space $\VV_N$ may play an important role in the iterative process as the iteration~\eqref{eq:forms iteration scheme W} could possibly converge to a Galerkin solution that differs from $\un$ if the final solution on $\VV_{N}$ is too far from $u^\star_{N-1}$, cf.~\eqref{eq:u0}.
            Similarly, the optimal rate for the residuals, viz.
            \[
            \|\E'(u^{n'}_N)\|_{\VV'}
            \lesssim\tau\qquad\forall n'\ge n^\star,
            \]
            follows the same lines of arguments as in  Cor.~\ref{cor:residual OCR},
            again under the same identification hypothesis.
    
            \item In the special case $\rho\leq \nicefrac{1}{\CP}$, the mapping $\mathsf{T}_{\Delta t}:\L^2(\Omega)\to \VV_N$ from~\eqref{def:op T} is a contraction with constant $0<r(\Delta t)<1$, cf.~Rmk.~\ref{rmk:special case}, so that the iteration process can be specified more explicitly: Indeed, the fixed point $\un$ is unique in this context, and the entire sequence $\{u^n_N\}_{n\geq 0}$ converges strongly to $\un$ for \textit{any} initial guess. Moreover, uniqueness ensures that the hierarchical embedding~\eqref{eq:u0b} 
            results in iterates converging to the \textit{correct} Galerkin solution on each level, so that~\eqref{eq:error tol remark} holds for every subsequence index $N$ with $\dim(\VV_N)\gtrsim\tau^{-2}$. In this regime, Banach's fixed point theorem provides the classical a priori error estimate
            \begin{equation*}
                \|u_N^\star-\u{}\|_{\Delta t} \leq r(\Delta t)^{n^\star}\|u_N^\star-u_N^0\|_{\Delta t},
            \end{equation*}
            with $r(\Delta t)$ from \eqref{eq:rt}, which results in the explicit iteration count
            \begin{equation}\label{eq:nstar remark}
                n^\star \gtrsim \frac{|\log(\tau)|}{|\log (r(\Delta t))|} \simeq\frac{\log(\dim(\VV_N))}{2|\log (r(\Delta t))|},
            \end{equation}
            showing that the number of iterations grows only logarithmically in $\dim(\VV_N)$, or equivalently in~$\tau^{-1}$.
        \end{enumerate}
    \end{remark}

\section{Numerical experiments}\label{sc:numerics}
    In this section, we present numerical experiments to investigate the optimal convergence rate (OCR) of the proposed iterative linearized finite-element method for the semilinear elliptic boundary value problem~\eqref{Pb:main PDE}. The computations aim to both validate our theoretical results derived in \S\ref{sc:ocr}--\S\ref{sc:approx} and to test the sharpness of our hypotheses by showing that convergence may fail if the hypotheses of Thm.~\ref{thm:OCR Galerkin} are not all satisfied.
    We solve the discrete problems using the IMEX iteration \eqref{eq:forms iteration scheme W}, starting from the zero initial guess $u^0\equiv 0\in\hoz$, with a sufficiently small time-step parameter $\Delta t>0$, cf.~Asm.~\ref{Ass:convergence}(i). All tests are based on the L-shaped domain
    \begin{equation}\label{eq:Lshaped}
    \Omega=(-1,1)^2\backslash([-1,0]\times[0,1]),
    \end{equation}
    which has a re-entrant corner at the origin in $\R^2$. For this domain, the principal eigenvalue $\lambda_1>0$ 
    of the Laplacian is numerically estimated (cf. \cite{Yuan2009}) by 
    \begin{equation*}
         \CP^{-1}\equiv \lambda_1= 9.639723\ldots,
    \end{equation*}
    with $\CP>0$ the Poincar\'e constant from \eqref{eq:Poincare L2}. 
    
    The initial (uniform) mesh comprises 12 triangles and 3 interior nodes; subsequent meshes are obtained by successive red-green-blue (RGB) refinements applied to a chosen subset of triangles so as to generate subdivisions that are appropriately graded toward the re-entrant corner. Motivated by~\eqref{eq:nstar remark}, on each mesh, we run the scheme \eqref{eq:forms iteration scheme W} for at most
        \begin{equation}\label{eq:gamman}
        n^\star=\lceil\gamma \log(\dim(\VV_N)) \rceil
        \end{equation}
    iterations, where $\gamma(\Delta t)\in \N$ is suitably chosen, and compute the approximate finite element solution~$\u{}\equiv \unn\in \VV_N$ as outlined in Rmk.~\ref{rmk:OCR approx}. All element integrals are performed using the standard 3-point quadrature rule at the midpoints of the triangle edges.

\subsection{Experiment 1: General case}\label{sc:experminent gen exp}
    In this first example, we aim to test the attainability of the OCR in the general case for an exponential nonlinearity. Specifically, we solve the following Dirichlet problem on the L-shaped domain $\Omega$ from~\eqref{eq:Lshaped}:
    \begin{alignat*}{2}
            -\Delta u &= 12\exp(-u^2) + g&\qquad&  \text{in } \Omega\\
            u&=0&& \text{on } \partial\Omega.
    \end{alignat*}
    We choose the right-hand side $g$ so that the exact solution is given by \begin{equation}\label{eq:u solution}
        u(x,y)=2r^{-\nicefrac{4}{3}}xy(1-x^2)(1-y^2),
    \end{equation}
    with the radial variable $r=(x^2+y^2)^{\nicefrac{1}{2}}$, which features a typical elliptic corner singularity at the origin $\mat c_1=(0,0)$; indeed, it can be verified that $u\in \htto$, for the weight function $\Phi_\beta=r^\beta$ with $\beta > \nicefrac{1}{3}$, which is in line with \eqref{eq:beta LB}. With $f(\cdot,s):=12\exp(-s^2) + g$, it is clear that $f(\cdot, s)\in \L^2(\Omega)$ for all $s\in \R$, and $f$ is continuously differentiable in its second argument. In fact, it is easy to see that
    \begin{equation*}
         |f_u(\x,s)| = \left|24se^{-s^2}\right| \leq 12\sqrt{\nicefrac{2}{e}}= 10.29\ldots \qquad \forall (\x,s)\in\Omega\times\R.
    \end{equation*}
    For every $\lambda>0$ we compute
    \begin{equation*}
        \sigma_f(\lambda) = \sup_{s\in \R}\left|\frac{1}{\lambda}-24se^{-s^2}\right| = \frac{1}{\lambda}+12\sqrt{\nicefrac{2}{e}},
    \end{equation*}
    and in turn
    \begin{equation*}
        \Lambda_f(\rho) = \left\{\lambda>0 \; : \; \nicefrac{1}{\lambda}+12\sqrt{\nicefrac{2}{e}} < \rho + \nicefrac{1}{\lambda}\right\},
    \end{equation*}
    which shows that $\Lambda_f(\rho)=(0,\infty)$ for $\rho>12\sqrt{\nicefrac{2}{e}}$. Moreover, we infer $\mu_f=0$ according to \eqref{eq:def mu}, yielding the bounds \eqref{eq:f' bounds}. Consequently, Asm.~\ref{Ass: f} is satisfied, and we find ourselves in the general case $\rho>\nicefrac{1}{\CP}$. Furthermore, the growth condition on $f_{uu}$ from Rmk.~\ref{rmk:aux pb existence} is clearly satisfied, and the dual problem \eqref{eq:dual pb} admits a solution.
    We choose $\Delta t=3$, as for $\rho\searrow 12\sqrt{\nicefrac{2}{e}}$ the largest admissible time-step satisfying Asm.~\ref{Ass:convergence}\eqref{Ass: delta_t bound} is given by
    \begin{equation*}
        \Delta t_{\text{sup}} = \frac{2}{12\sqrt{\nicefrac{2}{e}}-\nicefrac{1}{\CP}}= 3.06\ldots
    \end{equation*}
    In addition, recalling the definition of the energy functional $\E$ from~\eqref{eq:energy func}, we observe that
    \begin{equation*}
        |F(\x,t)|=\left|\int_0^t \left(12e^{-s^2} + g(\x)\right) \ds\right|\leq 6\sqrt{\pi} + tg(\x),
    \end{equation*}
    which implies that $\E(v)\rightarrow +\infty$ as $\|v\|_{\VV}\rightarrow \infty$, i.e., $\E$ is weakly coercive, and Asm.\ref{Ass:convergence}\eqref{Ass: E weak coercivity} is fulfilled.

    The re-entrant singularity of $u$ from \eqref{eq:u solution} necessitates a refinement toward $(0,0)$. To this end, we propose to use a graded mesh refinement (cf. \S\ref{sc:graded meshes}) with $\beta=0.4>\nicefrac{1}{3}$. The corresponding graded meshes are constructed based on appropriate RGB refinements of triangles close to~$\mat c_1$, with values $h\in\{0.25,0.15,0.08,0.035,0.016,0.008,0.0038,0.0019\}$; we present two examples in Fig.~\ref{Fig:Pb1 meshes}.
    By Rmk.~\ref{rmk:OCR approx} (see, in particular, the estimate~\eqref{eq:error tol remark}), we expect the sequence $\{\u{}\}_{N\geq 0}$ to converge (up to a subsequence) at optimal rate \eqref{eq:OCR}. 
    
    \begin{figure}
         \centering
        \includegraphics[scale=0.56]{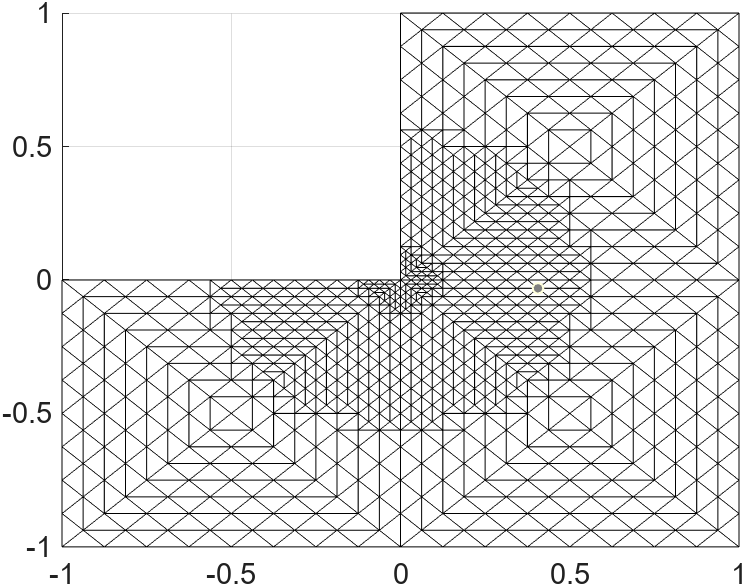}\hfill
         \includegraphics[scale=0.56]{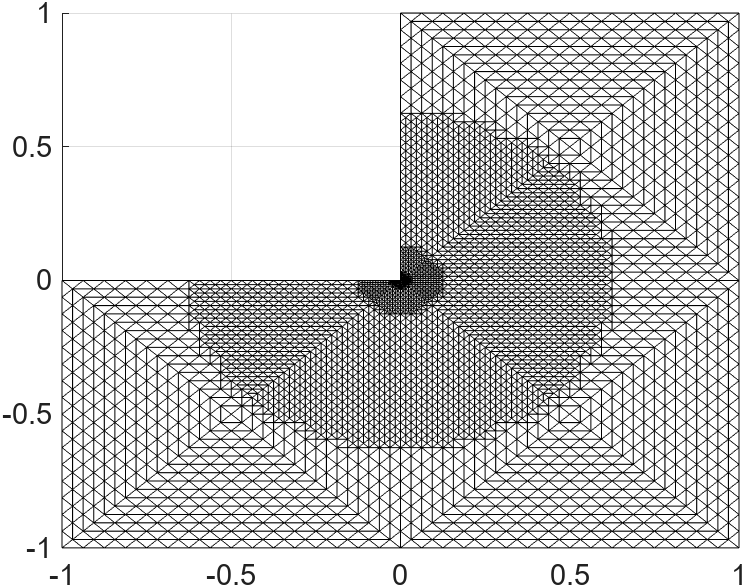}
         \caption{Graded meshes toward the origin in an L-shaped domain with $\beta=0.40$ for mesh size $h=0.035$ (left) and $h=0.016$ (right).}
         \label{Fig:Pb1 meshes}
    \end{figure}
    Fig.~\ref{Fig:Pb1 exponential} illustrates the global error between the exact solution $u$ and the iterative solution $\u{}$ against the number of degrees of freedom for each graded mesh. The iterative scheme \eqref{eq:forms iteration scheme W} is applied to each mesh until the (logarithmic) error slope between two consecutive meshes is less than $-0.49$, or the maximal number of iterations according to~\eqref{eq:gamman}
    is reached; here, $\gamma=1$ in \eqref{eq:gamman} suffices to obtain the desired convergence rate. Indeed, the convergence plot clearly shows that the expected convergence rate of $\dim(\VV_N)^{-\nicefrac{1}{2}}$ is achieved for the iterative linearized Galerkin scheme.

    \begin{figure}
    \begin{center}
        \includegraphics[scale=0.6]{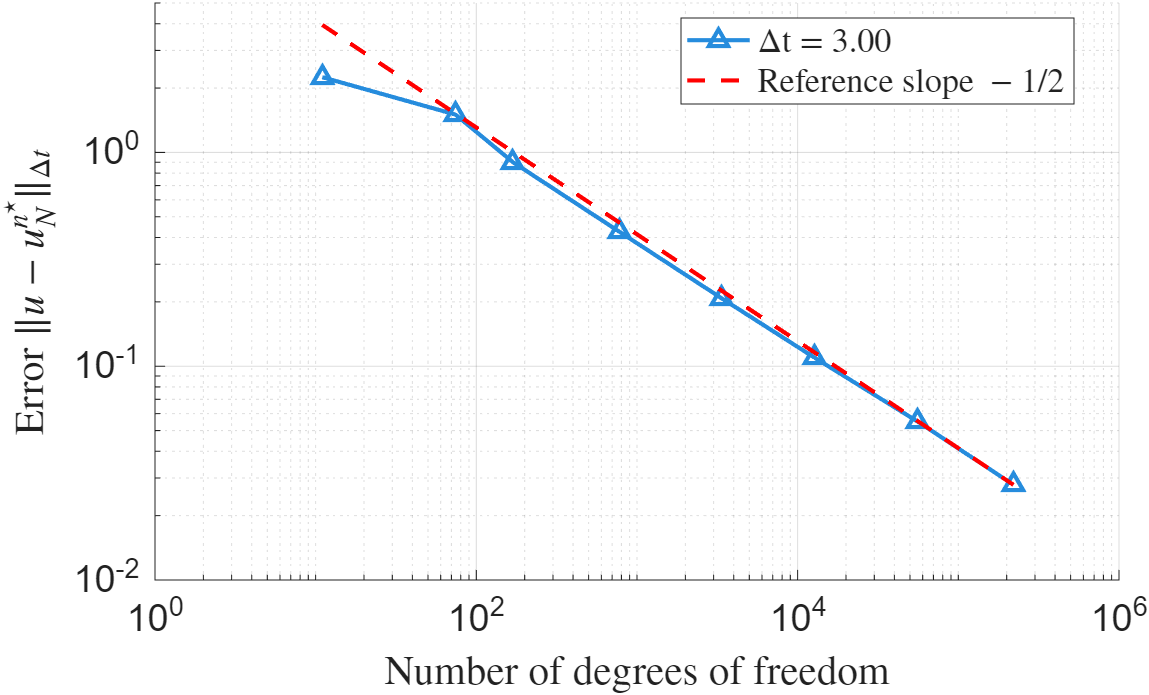}
        \caption{Experiment~1: Convergence behavior on graded meshes in the general case, using $\gamma=1$.}
        \label{Fig:Pb1 exponential}
    \end{center} 
    \end{figure}

\subsection{Experiment 2: Special case with discontinuous reaction derivative}\label{sc:experiment spec log}
    In our second experiment, we aim to showcase the
    redundancy of the continuity assumption on the reaction derivative $f_u$ in the special case. For this purpose, we consider the boundary value problem
    \begin{subequations}\label{Pb:exp 2}
    \begin{alignat}{2}
            -\Delta u &= \alpha\ln(1+|u-\nicefrac{1}{8}|) + g&\qquad&  \text{in } \Omega\label{Pb:exp 2a}\\
            u&=0&& \text{on } \partial\Omega,
    \end{alignat}
    \end{subequations}
    with a fixed parameter $0<\alpha<\nicefrac{1}{\CP}$. Here, we choose the right-hand side $g$ so that the exact solution remains given by \eqref{eq:u solution} from Exp.~\ref{sc:experminent gen exp}. By similar arguments as above, the right-hand side function, $f$, in~\eqref{Pb:exp 2a}
    satisfies Asm.~\ref{Ass: f} for $\rho>\alpha$ and $\mu_f=0$ since $\sigma_f(\lambda) = \alpha+\lambda^{-1}$ in~\eqref{eq: sigma(lambda)}, and $f_u(\cdot,s)$ is discontinuous at $s=\nicefrac{1}{8}$. Furthermore, choosing $\alpha <\rho \leq \nicefrac{1}{\CP}$, places us in the special case, for which $f_u$ is not required to be continuous, cf. \S\ref{ssc:OCR special case}. Referring to \S\ref{sc: convergence}, Asm.~\ref{Ass:convergence} is automatically fulfilled, and we employ the reduced iterative scheme \eqref{eq:reduced scheme} corresponding to $\Delta t =\infty$. As a result, Rmk.~\ref{rmk:OCR approx}(ii) ensures $\{\u{}\}_{N\geq 0}$ reaches the optimal rate of convergence.
    
    For the computations reported in Fig.~\ref{Fig:Pb2 special} the values $\alpha=9.6$ and $\gamma=1$ have been applied. Although $f_u$ is discontinuous when the solution takes the value $u=\nicefrac{1}{8}$, the numerical result exhibits no qualitative deviation from the smooth setting: the method attains the same convergence rate as observed in Exp.~\ref{sc:experminent gen exp}. In fact, the analytic machinery that was needed in the general case (e.g., uniform continuity of $f_u$ along connecting paths from Lem.~\ref{Lemma: int f' conv}) becomes unnecessary, because contraction directly enforces decay of the iteration error, cf. \S\ref{ssc:OCR special case}. Thus, Exp.~\ref{sc:experiment spec log} exhibits OCR in the special case despite the presence of derivative discontinuities.
    
    \begin{figure}
    \begin{center}
        \includegraphics[scale=0.5]{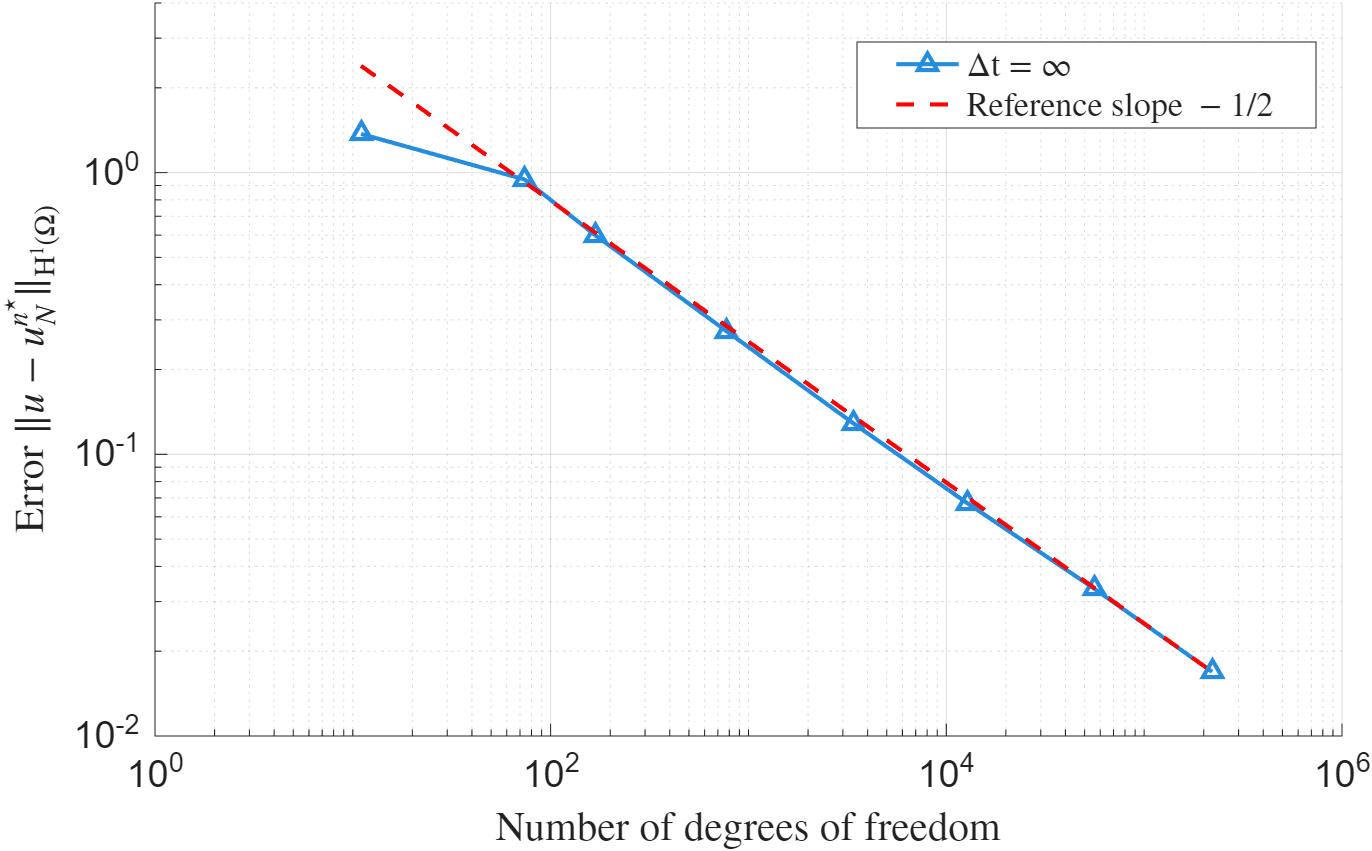}
        \caption{Experiment~2: Convergence behavior of the $\H^1$-error (i.e., w.r.t the norm~\eqref{eq:lambda norm} with $\lambda=1$) on graded meshes in the special case with reduced scheme \eqref{eq:reduced scheme}, using $\alpha=9.6$ and $\gamma=1$.}
        \label{Fig:Pb2 special}
    \end{center} 
    \end{figure}

\subsection{Experiment 3: Discontinuous reaction derivative}\label{sc:experiment gen div}
    This final experiment tests the sharpness of our framework's assumptions by exploring the general case with a discontinuous derivative $f_u$, demonstrating failure of optimal convergence. Specifically, we consider the same Dirichlet problem as in Exp.~\ref{sc:experiment spec log} with parameter $\alpha=20>\nicefrac{1}{\CP}$, i.e.,
    \begin{subequations}\label{Pb:exp 3}
    \begin{alignat}{2}
            -\Delta u &= 20\ln(1+|u-\nicefrac{1}{8}|) + g&\qquad&  \text{in } \Omega\label{Pb:exp 3a}\\
            u&=0&& \text{on } \partial\Omega,
    \end{alignat}
    \end{subequations}
    Again, we choose the right-hand side $g$ so that the exact solution is given by~\eqref{eq:u solution} from Exp.~\ref{sc:experminent gen exp}. Similarly to Exp.~\ref{sc:experiment spec log}, we note that the right-hand side, $f$, of~\eqref{Pb:exp 3a}
    satisfies Asm.~\ref{Ass: f} for $\rho>20$ and $\mu_f=0$. Thereby, the choice of the parameter $\alpha$ now results in a bound $\rho$ exceeding $\nicefrac{1}{\CP}$, taking the problem to the general case; notably, however, the continuity of $f_u$ in the second argument, as required by Thm.~\ref{thm:OCR Galerkin}, is violated at $u=\nicefrac18$.

    In the limit $\rho\searrow 20$ we have
    \begin{equation*}
        \Delta t_{\text{sup}} = \frac{2}{20-\nicefrac{1}{\CP}}= 0.19\ldots,
    \end{equation*}
    so $\Delta t =0.15$ satisfies Asm.~\ref{Ass:convergence}\eqref{Ass: delta_t bound}. Moreover, to verify Asm.~\ref{Ass:convergence}\eqref{Ass: E weak coercivity}, we observe the asymptotic behavior
    \[
    |F(\cdot,t)|=\left|\int_0^t f(\cdot,s)\,\ds\right|
    \le |t|\sup_{(\x,s)\in\Omega\times(0,t)}|f(\x,s)|
    \lesssim|t|\ln(1+|t|)\quad\text{as }|t|\to\infty;
    \]
    cf.~\eqref{eq:F}. Hence, the behavior of the energy from \eqref{eq:energy func} is dominated by its gradient term, and so $\E(v)\rightarrow +\infty$ as $\|v\|_{\VV}\rightarrow \infty$. 
    
    Figure \ref{Fig:Pb3 divergence} (left) illustrates the global error for the same sequence of graded meshes as in the previous experiments. Here, we choose $\gamma=10^3$ in~\eqref{eq:gamman}, which corresponds to $1.3\cdot 10^4$ iterations on the final mesh, and at most $6.4 \cdot 10^4$ iterations in total. Unlike the first two experiments, the sequence $\{\u{}\}_{N\geq 0}$ cannot achieve OCR within a reasonable number of iterations. It is interesting to observe that, for this experiment, the sequence of ratios $\nicefrac{a_N}{b_N}$ computed from \eqref{eq:def an bn} seems to stabilize at a positive value of $\limab\approx 0.12$ on finer meshes; see Fig.~\ref{Fig:Pb3 divergence} (right). Since the proof of Thm.~\ref{thm:OCR Galerkin} is based on the non-existence of the case $\limab>0$ (cf. \eqref{eq:def ell}), the occurring positivity of $\limab$ could be regarded as an a posteriori indicator that the hypotheses in Thm.~\ref{thm:OCR Galerkin} (albeit seemingly necessary) are violated. From this point of view, there are two possible reasons for this failure of OCR: Firstly, the existence of a solution to the dual problem \eqref{eq:dual pb} (or the Fredholm condition $\KK=\{0\}$, cf.~\S\ref{ssc:Aubin-Nitsche}) cannot be established in a straightforward way; furthermore, the weak coercivity of the dual energy, see Rmk.~\ref{rmk:aux pb existence}, does not seem to be obvious. In light of Thm.~\ref{thm:OCR Galerkin deg} we remark, however, that the non-existence of a dual solution is fairly unlikely. Secondly, since the reaction term $f$ is merely differentiable and its derivative is discontinuous, we conjecture that this lack of regularity is responsible for the absence of convergence in the present experiment.

    \begin{figure}
         \centering
         \includegraphics[scale=0.38]{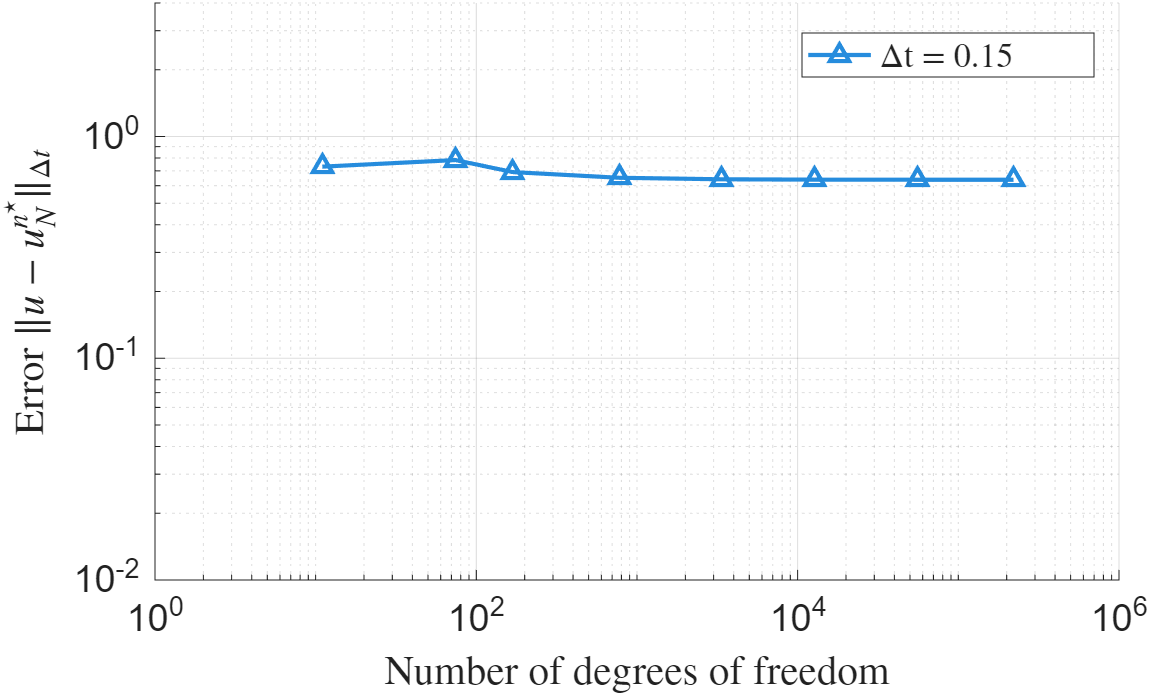}\hfill
         \includegraphics[scale=0.38]{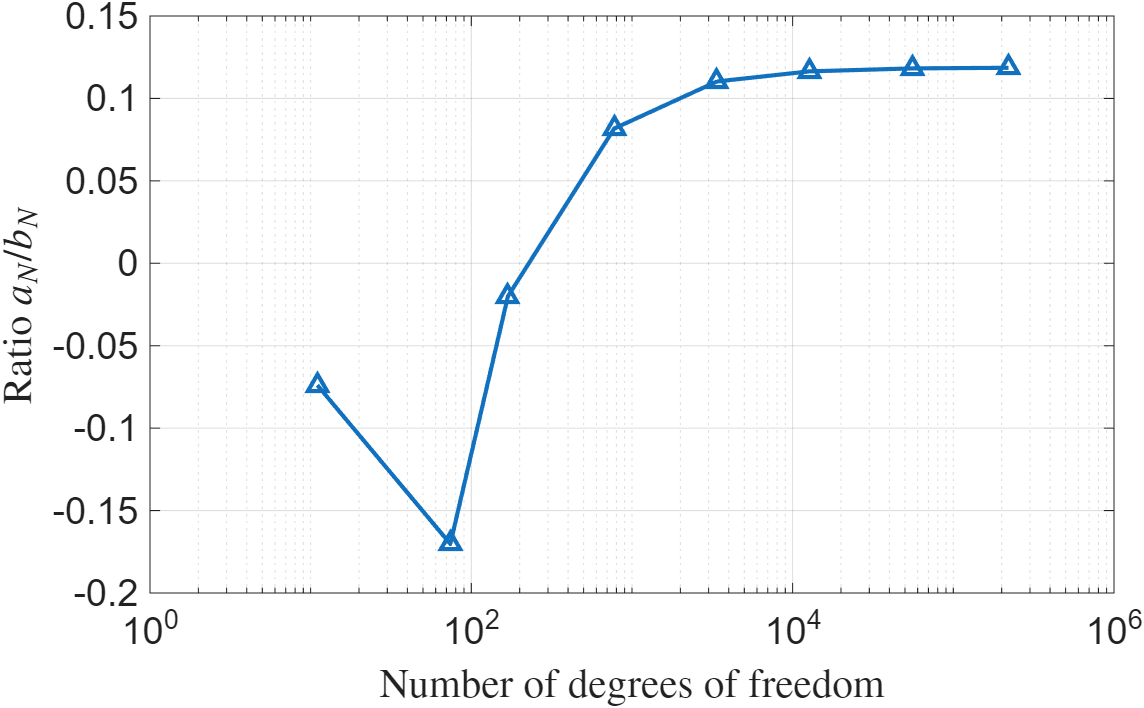}
         \caption{Experiment~3: Failure of convergence on graded meshes in the general case (left) and corresponding ratio $\nicefrac{a_N}{b_N}$ (right), using $\gamma=10^3$.}
        \label{Fig:Pb3 divergence}
    \end{figure}

\section{Conclusions}\label{sc:conclusion}
    In our work, we have developed an a priori error analysis of an IMEX-FEM framework for semilinear elliptic problems with non-monotone reaction terms that have uniformly bounded derivative. For such problems, we have demonstrated weak convergence of the Galerkin limits to a weak solution of \eqref{Pb:main PDE}, and we have established strong convergence for a subsequence. In the special case of a contractive iteration scheme, the entire sequence converges. The convergence analysis identifies two alternative regimes for the iteration---a contraction regime and an asymptotically monotone regime---and exploits this dichotomy in the error analysis. A central ingredient is Prop.~\ref{prop:an faster}, which provides an error bound expressed in terms of the nonlinearity and discrete residual quantities and, crucially, does not assume strong convergence of the Galerkin limits. As a result, strong convergence appears as a by-product of the estimate in the gradient-dominated case. We have then implemented this observation to force strong convergence in the general setting. Building on these convergence properties and on corner-weighted regularity, we have derived optimal convergence rates on suitably graded meshes. The same optimal rates have been shown to be attainable in practice with only a finite number of linearizations on each discrete space, with an explicit estimate of the order of the number of iteration steps in the contractive regime. Numerical experiments confirm the predicted rates, demonstrate the effectiveness of local mesh grading in the presence of corner singularities, and illustrate the sharpness of the proposed assumptions.

\bibliographystyle{alpha}
\bibliography{ref}

@book{Evans2010,
   author = {L.C. Evans},
   publisher = {American Mathematical Society},
   title = {Partial Differential Equations},
   volume = {19},
   year = {2010},
   edition = {2nd},
   series = {Graduate Studies in Mathematics}
}

@book{Schwab1998,
   author = {Schwab, C.},
   month = {10},
   pages = {154-167},
   publisher = {Clarendon Press},
   title={p- and hp- {F}inite {E}lement {M}ethods: {T}heory and {A}pplications in {S}olid and {F}luid {M}echanics},
   year = {1998},
   isbn={9780198503903},
   lccn={98023129},
  series={Numer. Math. Sci. Comput.},
}

@misc{Hardering2017,
      title={The {A}ubin--{N}itsche Trick for Semilinear Problems}, 
      author={H. Hardering},
      year={2017},
      howpublished={Tech. {R}eport 1707.00963, ar{X}iv.org},
      archivePrefix={arXiv},
      primaryClass={math.NA},
      url={https://arxiv.org/abs/1707.00963}, 
}

@article{Babuska1979,
   author = {I. Babuška and R.B. Kellogg and J. Pitkäranta},
   doi = {10.1007/BF01399326},
   issn = {0029-599X},
   issue = {4},
   journal = {Numer. Math.},
   month = {12},
   pages = {447-471},
   title = {Direct and inverse error estimates for finite elements with mesh refinements},
   volume = {33},
   year = {1979},
}

@article{Babuska1986,
    author = {Babuška, I. and Guo, B.Q.},
    title = {Regularity of the Solution of Elliptic Problems with Piecewise Analytic Data. {P}art {I}. {B}oundary Value Problems for Linear Elliptic Equation of Second Order},
    journal = {SIAM J. Math. Anal.},
    volume = {19},
    number = {1},
    pages = {172-203},
    year = {1988},
    doi = {10.1137/0519014}
}

@article{Yuan2009,
   author = {Q. Yuan and Z. He},
   doi = {10.1016/j.cam.2009.08.114},
   issn = {03770427},
   issue = {4},
   journal = {J. Comput. Appl. Math.},
   month = {12},
   pages = {1083-1090},
   title = {Bounds to eigenvalues of the {L}aplacian on {L}-shaped domain by variational methods},
   volume = {233},
   year = {2009},
}

@article{Wih,
    author = {Amrein, M. and Heid, P. and Wihler, T.P.},
    title = {A Numerical energy reduction approach for semilinear diffusion-reaction boundary value problems Based on steady-state iterations},
    journal = {SIAM J. Numer. Anal.},
    volume = {61},
    number = {2},
    pages = {755-783},
    year = {2023},
    doi = {10.1137/22M1478586}
}

@article{SpicherWihler2025,
   author = {F. Spicher and T.P. Wihler},
   doi = {10.1051/m2an/2025093},
   issn = {2822-7840},
   issue = {6},
   journal = {ESAIM: Math. Model. Numer. Anal.},
   month = {11},
   pages = {3363-3383},
   title = {Optimal finite element approximations of monotone semilinear elliptic pde with subcritical nonlinearities},
   volume = {59},
   year = {2025},
}

@article{kondrat1967boundary,
  title={Boundary problems for elliptic equations in domains with conical or angular points},
  author={Kondrat'ev, V.A.},
  year={1967},
  url={https://api.semanticscholar.org/CorpusID:122925591},
  journal = {Trudy Moskov. Mat. Obšč.},
  volume = {16},
  pages = {209-292}
}

@book{RenardyRogers,
   author = {M. Renardy and R.C. Rogers},
   city = {New York},
   doi = {10.1007/b97427},
   isbn = {0-387-00444-0},
   publisher = {Springer-Verlag},
   title = {An Introduction to Partial Differential Equations},
   volume = {13},
   year = {2004},
   edition = {2nd},
}

@article {Vexler:25,
    AUTHOR = {Vexler, B.},
     TITLE = {A priori error estimates for finite element discretization of
              semilinear elliptic equations with non-{L}ipschitz
              nonlinearities},
   JOURNAL = {ESAIM Math. Model. Numer. Anal.},
    VOLUME = {59},
      YEAR = {2025},
    NUMBER = {2},
     PAGES = {1095--1112},
      ISSN = {2822-7840,2804-7214},
       DOI = {10.1051/m2an/2025019},
       URL = {https://doi.org/10.1051/m2an/2025019},
}

@article {HHSW:26,
    AUTHOR = {He, Y. and Houston, P. and Schwab, C. and Wihler, T.P.},
     TITLE = {Exponential convergence of {\it hp}-{ILGFEM} for semilinear
              elliptic boundary value problems with monomial reaction},
   JOURNAL = {IMA J. Numer. Anal.},
    VOLUME = {46},
      YEAR = {2026},
    NUMBER = {3},
     PAGES = {1470--1493},
      ISSN = {0272-4979,1464-3642},
       DOI = {10.1093/imanum/draf030},
       URL = {https://doi.org/10.1093/imanum/draf030},
}

\end{document}